\documentclass{amsart}
%
%
%
%
\title{An identification of the Baum--Connes and Davis--L\"uck assembly maps}
\thanks{This work was funded by the Deutsche Forschungsgemeinschaft (DFG, German ResearchFoundation) – Project-ID 427320536 – SFB 1442, as well as under Germany’s Excellence Strategy EXC 2044 390685587, Mathematics Münster: Dynamics–Geometry–Structure.}

\author{Julian Kranz}
\address{Julian Kranz\\
Westf\"alische Wilhelms-Universit\"at M\"unster,
Mathematisches Institut\\
Einsteinstr.~62, 48149 M\"unster, Germany}
\email{julian.kranz@uni-muenster.de}
\urladdr{https://www.uni-muenster.de/IVV5WS/WebHop/user/j\_kran05/}

%
%
%
%
%
%

\usepackage{amssymb,url}
\usepackage{graphicx} 
\usepackage{tikz-cd} 
\usepackage{amsmath}
\usepackage{mathtools} 
\usepackage{bm} 

%
%

		\newcommand{\C}{\mathbb C}

		\newcommand{\K}{\bm K}

		\newcommand{\N}{\mathbb N}
		
		\newcommand{\R}{\mathbb R}

		\newcommand{\Z}{\mathbb Z}
		

		\DeclareMathOperator{\Fin}{Fin}
		\DeclareMathOperator{\id}{id}		
		\DeclareMathOperator{\Ob}{Ob}

		\DeclareMathOperator{\op}{op}	
		\DeclareMathOperator{\Or}{Or}	
		\DeclareMathOperator{\pt}{pt}

		\DeclareMathOperator{\Ind}{Ind}
		
		\DeclareMathOperator{\Res}{Res}
		\DeclareMathOperator{\colim}{colim}
		
		\DeclareMathOperator{\Cone}{Cone}
		\DeclareMathOperator{\ev}{ev}

		\DeclareMathOperator{\pr}{pr}
		

		\newcommand{\Sp}{\mathfrak{Sp}}

		\newcommand{\Top}{\mathfrak{Top}}
		
		\newcommand{\grc}{\mathfrak C^*_{\Z_2}}
		
%
%
\theoremstyle{plain}
\newtheorem{thm}[subsection]{Theorem}
\newtheorem{lem}[subsection]{Lemma}
\newtheorem{cor}[subsection]{Corollary}
\newtheorem{prop}[subsection]{Proposition}
\theoremstyle{definition}
\newtheorem{defn}[subsection]{Definition}
\newtheorem{rem}[subsection]{Remark}
\newtheorem{eg}[subsection]{Example}

\usepackage{enumitem}
\setenumerate{label=(\roman*),itemsep=3pt,topsep=3pt}

\begin{document}

\begin{abstract}
The Baum--Connes conjecture predicts that a certain assembly map is an isomorphism. We identify the homotopy theoretical construction of the assembly map by Davis and L\"uck \cite{davis1998spaces} with the category theoretical construction by Meyer and Nest \cite{meyer2004baum}. This extends the result of Hambleton and Pedersen \cite{hambleton2004identifying} to arbitrary coefficients. Our approach uses abstract properties rather than explicit constructions and is formally similar to Meyer's and Nest's identification of their assembly map with the original construction of the assembly map by Baum, Connes and Higson \cite{baum1994classifying}.
\end{abstract}

\maketitle

\section{Introduction}

Let $G$ be a countable discrete group and $A$ a separable $G$-$C^*$-algebra. The Baum--Connes conjecture predicts that the \emph{Baum--Connes assembly map} 
	\[\mu: K^G_*(\mathcal E_{\Fin}G,A)\to K_*(A\rtimes_r G)\]
is an isomorphism. The map was defined by Baum, Connes and Higson \cite{baum1994classifying} using the equivariant $KK$-theory of Kasparov \cite{Kasparov1988}. Later, a homotopy theoretical definition of the assembly map was given by Davis and L\"uck \cite{davis1998spaces}. They developed an abstract machinery to study isomorphism conjectures like the Baum--Connes conjecture or the Farrell-Jones conjecture in a common framework. Their machinery takes as input a family $\mathcal F$ of subgroups of $G$ and an $\Or(G)$-spectrum $\bm E$, i.e. a functor from the category of all homogeneous $G$-spaces $G/H$ to the category of spectra. Every $\Or(G)$-spectrum $\bm E$ has a natural extension to the category of $G$-$CW$-complexes and defines a $G$-equivariant homology theory $H^G_*(-,\bm E)$ by taking homotopy groups. In this setting, the \emph{$(\bm E,\mathcal F,G)$-assembly map} is the map 
	\begin{equation}\label{generalassembly}
		H^G_*(\mathcal E_\mathcal FG,\bm E)\to H^G_*(\pt,\bm E)
	\end{equation}
induced by the projection $\mathcal E_\mathcal FG\to \pt$ where $\mathcal E_\mathcal FG$ denotes a classifying space for the family $\mathcal F$. \\

To obtain the Baum--Connes assembly map in \eqref{generalassembly}, one takes $\mathcal F=\Fin$ to be the family of finite subgroups and $\K_A^G$ to be an $\Or(G)$-spectrum satisfying
	\begin{equation}\label{KtheorySpectrumOnG/H}
		\pi_*(\K_A^G(G/H))\cong K_*(A\rtimes_r H)
	\end{equation}
for all subgroups $H\subseteq G$. We call the resulting assembly map
	\[H^G_*(\mathcal E_{\Fin}G,\K_A^G)\to H^G_*(\pt,\K_A^G)\]
the \emph{Davis--L\"uck assembly map}. The construction of $\K_A^G$ has been done by Davis and L\"uck in the case $A=\C$ and by Mitchener \cite{mitchener2002c} in the general case. We will give a variant of Mitchener's construction using Michael Joachims $K$-theory spectrum for $C^*$-categories \cite{joachim2003k}.\\

It is not at all obvious that this construction gives rise to the same assembly map as in \cite{baum1994classifying}. Identifications have been made in \cite{hambleton2004identifying} for the case $A=\C$ and in \cite{mitchener2002c} for the general case. However, both works rely on heavy machinery and omit a lot of detail. Furthermore, the construction of the assembly map in \cite{mitchener2002c} contains some inconsistencies. For example, it is not clear to the author of this paper whether the $K$-theory class $[\mathcal E_K]$ in \cite[Def. 6.2]{mitchener2002c} is well-defined for a noncompact $G$-space $K$. \\

The main ingredient for our identification is yet another construction of the assembly map by Meyer and Nest \cite{meyer2004baum}. Recall that the equivariant $KK$-groups $KK^G(A,B)$ are the morphism sets of a triangulated category $\mathfrak{KK}^G$ with separable $G$-$C^*$-algebras as objects. Let $\mathcal{CI}\subseteq \mathfrak{KK}^G$ be the full subcategory of $G$-$C^*$-algebras $\Ind_H^GB$ induced from finite subgroups $H\subseteq G$. Let $\langle \mathcal {CI}\rangle$ be the localizing subcategory generated by $\mathcal {CI}$, i.e. the smallest full subcategory containing $\mathcal {CI}$ which is closed under $KK^G$-equivalence, suspension, mapping cones and countable direct sums. Every $G$-$C^*$-algebra can be approximated by a $G$-$C^*$-algebra in $\langle \mathcal CI\rangle$ in the following sense:

	\begin{thm}[{\cite[Prop. 4.6]{meyer2004baum}}]
		Let $A$ be a separable $G$-$C^*$-algebra. Then there is a $G$-$C^*$-algebra $\tilde A\in \langle \mathcal{CI}\rangle$ and an element $D\in KK^G(\tilde A,A)$ which restricts to a $KK^H$-equivalence for every finite subgroup $H\subseteq G$.
	\end{thm}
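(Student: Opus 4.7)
The plan is to realize $\tilde A$ via a complementary pair of localizing subcategories in $\mathfrak{KK}^G$, in the spirit of Meyer and Nest. First I would define the right-orthogonal candidate
\[
\mathcal N := \{N \in \mathfrak{KK}^G : \Res^G_H N \simeq 0 \text{ in } \mathfrak{KK}^H \text{ for every finite } H \leq G\}.
\]
Each $\Res^G_H$ is an exact, coproduct-preserving triangulated functor, so $\mathcal N$ is itself a localizing subcategory of $\mathfrak{KK}^G$. The induction--restriction adjunction
\[
KK^G(\Ind_H^G B, N) \cong KK^H(B, \Res^G_H N)
\]
then gives $KK^G_*(X, N) = 0$ for every $X \in \mathcal{CI}$ and every $N \in \mathcal N$; since the class of $X$ satisfying this vanishing forms a localizing subcategory, it extends to all $X \in \langle\mathcal{CI}\rangle$.

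It therefore suffices to exhibit a triangle $\tilde A \arw{D} A \to N \to \tilde A[1]$ in $\mathfrak{KK}^G$ with $\tilde A \in \langle\mathcal{CI}\rangle$ and $N \in \mathcal N$: restricting such a triangle to any finite $H$ kills $\Res^G_H N$ and forces $\Res^G_H D$ to be a $KK^H$-equivalence. I would construct this triangle by a cellular induction. Start with $P_0 := 0$, giving cone $N_0 = A$. Inductively, given $P_n \to A$ with cone $N_n$, collect for each finite $H \leq G$, each degree, and each $B$ in a fixed countable family of separable $H$-$C^*$-algebras all classes in $KK^H_*(B, \Res^G_H N_n)$. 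Via the adjunction these correspond to morphisms $\Ind_H^G B \to N_n$, which assemble into a single map $\Sigma_n \to N_n$ from a countable direct sum $\Sigma_n$ of objects in $\mathcal{CI}$. Extending this map along the connecting morphism $N_n \to P_n[1]$ defines $P_{n+1}$ fitting in a triangle $\Sigma_n[-1] \to P_n \to P_{n+1}$, together with a factorisation of $P_n \to A$ through $P_{n+1}$. The octahedral axiom then supplies a triangle $\Sigma_n \to N_n \to N_{n+1}$ showing that the chosen classes in $\Res^G_H N_n$ vanish in $\Res^G_H N_{n+1}$. Setting $\tilde A := \hocolim_n P_n$ keeps $\tilde A$ inside $\langle\mathcal{CI}\rangle$, since a homotopy colimit in a triangulated category with countable coproducts is the mapping cone of a self-map of a countable coproduct.

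The main technical obstacle is confirming that the cone $N$ of the resulting map $\tilde A \to A$, which identifies with $\hocolim_n N_n$, genuinely lies in $\mathcal N$. This requires two ingredients: a countable test family of $H$-$C^*$-algebras rich enough to detect the zero object in $\mathfrak{KK}^H$ via the functors $KK^H_*(B, -)$, and a Milnor-type $\lim^1$-argument ensuring that the stagewise killing of classes along the tower $(N_n)$ actually produces vanishing of $KK^H_*(B, \Res^G_H N)$ in the limit. Once these are in hand, the remaining steps are purely formal manipulations inside a triangulated category together with the induction--restriction adjunction.
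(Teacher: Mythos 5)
The paper does not reprove this statement; it quotes it from Meyer--Nest, so the relevant comparison is with their argument. Your overall template (a complementary pair $(\langle\mathcal{CI}\rangle,\mathcal N)$, a cellular tower $P_n$ with cones $N_n$, and $\tilde A=\hocolim P_n$) is the right shape, and the orthogonality step via the $\Ind_H^G\dashv\Res_G^H$ adjunction is fine. But the two ``technical obstacles'' you defer at the end are not loose ends --- they are the actual content, and the first one is fatal as stated. Your construction attaches cells indexed by classes in $KK^H_*(B,\Res_G^HN_n)$ for $B$ ranging over a fixed countable test family, and you then need that family to \emph{detect the zero object} of $\mathfrak{KK}^H$, i.e.\ $KK^H_*(B,N)=0$ for all test $B$ forces $N\simeq 0$. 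No such countable family is available, already for $H=\{e\}$: the single generator $\C$ only detects $K$-theory (it generates the bootstrap class, not all of $\mathfrak{KK}$), and $\mathfrak{KK}$ is not known to be generated by any set of objects in this sense. Even granting such a family, your $\lim^1$ step needs $KK^H_*(B,-)$ to interact well with the sequential homotopy colimit, i.e.\ compactness of the test objects, which again is not available for general separable $B$.

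Meyer and Nest avoid both problems by never trying to detect zero objects. At each stage they use the counit of the adjunction applied to $N_n$ itself, i.e.\ the single map
\[
\varepsilon_n:\ \bigoplus_{H\le G \text{ finite}} \Ind_H^G\Res_G^H N_n \longrightarrow N_n,
\]
whose source lies in $\mathcal{CI}$ up to countable direct sums (here countability of $G$ is used to keep the sum countable). The key point is that for each finite $H_0$ the $H_0$-algebra $\Res_G^{H_0}\Ind_{H_0}^G B$ contains $B$ naturally as a direct summand (the summand over the coset $eH_0$), so $\Res_G^{H_0}\varepsilon_n$ is split surjective; hence in the triangle $\bigoplus_H\Ind_H^G\Res_G^HN_n\to N_n\to N_{n+1}$ the map $\Res_G^{H_0}N_n\to\Res_G^{H_0}N_{n+1}$ is literally the zero morphism. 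Consequently $\Res_G^{H_0}(\hocolim_n N_n)\simeq\hocolim_n\Res_G^{H_0}N_n\simeq 0$ for trivial reasons (a homotopy colimit over zero maps vanishes, and $\Res$ commutes with countable sums and cones), with no generating family and no $\lim^1$ analysis. Everything else in your outline --- the octahedron bookkeeping, $\tilde A=\hocolim P_n\in\langle\mathcal{CI}\rangle$ as the cone of a self-map of a countable coproduct, and the conclusion that $D$ restricts to a $KK^H$-equivalence because its cone restricts to zero --- then goes through as you wrote it. If you replace your cell-attachment step by this counit construction, the proof closes.
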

	
Meyer and Nest identify the Baum--Connes assembly map with the map
	\begin{equation*}\label{triangulatedBC}
		D_*:K_*(\tilde A\rtimes_rG)\to K_*(A\rtimes_rG),
	\end{equation*}
which we call the \emph{Meyer--Nest assembly map}. In fact, they achieve the identification as follows:

\begin{thm}[{\cite[Thm. 5.2]{meyer2004baum}}]\label{meyernestidentification}
	The indicated maps in the following diagram are isomorphisms.
	\begin{equation*}
		\begin{tikzcd}
			&K^G_*(\mathcal E_{\Fin}G,\tilde A)\arrow[r,"\cong","\mu"']\arrow[d,"\cong","D_*"'] &K_*(\tilde A\rtimes_rG)\arrow[d,"D_*"]\\
			&K^G_*(\mathcal E_{\Fin}G,A)\arrow[r,"\mu"]					&K_*(A\rtimes_rG)
		\end{tikzcd}.
	\end{equation*}
\end{thm}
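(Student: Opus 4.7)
The plan is to establish the three distinct assertions of the diagram and combine them. Commutativity of the square follows from the fact that the Baum--Connes assembly map is natural in the coefficient algebra under $KK^G$: the class $D \in KK^G(\tilde A, A)$ induces compatible maps on both the topological and the analytic side, since $\mu$ can itself be written (in the Baum--Connes--Higson picture) as a Kasparov product with a fixed class depending only on $\mathcal E_{\Fin}G$.

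For the left vertical arrow $D_*$ on $K^G_*(\mathcal E_{\Fin}G,-)$, I would argue cellularly. Since $\mathcal E_{\Fin}G$ is a proper $G$-$CW$ complex, its equivariant $K$-homology is assembled from the contributions of cells of the form $G/H$ with $H \in \Fin$, each of which contributes $K_*^H(\pt, B) \cong K_*(B \rtimes H)$. Since $D$ is a $KK^H$-equivalence for every finite $H$, it induces isomorphisms on each such cell. A Mayer--Vietoris / spectral-sequence argument then upgrades this to an isomorphism on all of $\mathcal E_{\Fin}G$; more abstractly, a natural transformation of $G$-equivariant homology theories that is an isomorphism on all $G/H$ with $H \in \Fin$ is an isomorphism on every proper $G$-$CW$ complex.

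For the top horizontal arrow $\mu$ on $\tilde A$, the strategy is the familiar ``isomorphism on generators implies isomorphism on the localizing subcategory''. Both $B \mapsto K^G_*(\mathcal E_{\Fin}G, B)$ and $B \mapsto K_*(B \rtimes_r G)$ are homological functors on $\mathfrak{KK}^G$ that commute with countable direct sums, and $\mu$ is a natural transformation between them. The full subcategory of $\mathfrak{KK}^G$ on which $\mu_B$ is an isomorphism is therefore localizing, so it suffices to check the generators $\Ind_H^G C$ with $H \in \Fin$. For $B = \Ind_H^G C$ one invokes the induction isomorphism together with the fact that, for a finite group $H$, the classifying space $\mathcal E_{\Fin}H$ is a point and the assembly map reduces to the identity.

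The main obstacle is verifying the induction step: one must show that $\mu^G_{\Ind_H^G C}$ corresponds, under the natural induction isomorphisms on both sides, to $\mu^H_C$. This is a compatibility statement for Kasparov's descent and induction functors, and it depends delicately on the chosen model of $\mu$; once it is in place, the isomorphism statements for the top and left maps combine via the commuting square to identify $\mu$ on $A$ with the Meyer--Nest map $D_*$ up to the two established isomorphisms.
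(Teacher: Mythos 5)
Your proposal is correct and follows essentially the same route as the source: the paper does not reprove this statement but quotes it from \cite[Thm.~5.2]{meyer2004baum}, whose argument proceeds exactly as you describe --- naturality of $\mu$ in the coefficient algebra, a cell-by-cell/excision argument for the left vertical map using that $D$ is a $KK^H$-equivalence for every finite $H$, and the ``localizing subcategory plus generators'' argument with induction compatibility for the top map. The same three-step strategy is what the paper then replicates for its own Theorem~\ref{mainidentification}, where the induction compatibility you flag as the main obstacle is handled via Green's imprimitivity theorem (Theorem~\ref{greensimprimitivitytheorem}) together with Lemma~\ref{davisinduction}.
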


We use the same strategy, to identify the Davis--L\"uck assembly map to the Meyer--Nest assembly map:

\begin{thm}[Theorem \ref{mainidentification}] \label{intromaintheorem}
	The indicated maps in the following diagram are isomorphisms:
		\begin{equation}\label{DL-BC-diagram}
		\begin{tikzcd}
			&H^G_*(\mathcal E_{\Fin}G,\K_{\tilde A}^G)\arrow[r,"\cong","\pr_*"']\arrow[d,"\cong","D_*"']	&H^G_*(\pt,\K_{\tilde A}^G)\arrow[d,"D_*"]\\
			&H^G_*(\mathcal E_{\Fin}G,\K_A^G)\arrow[r,"\pr_*"]				&H^G_*(\pt,\K_A^G)
		\end{tikzcd}.
	\end{equation}
\end{thm}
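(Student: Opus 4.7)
The plan is to mirror Meyer and Nest's strategy from Theorem \ref{meyernestidentification}: verify that the left vertical map $D_*$ is an isomorphism by an Atiyah--Hirzebruch-type argument on the $G$-CW complex $\mathcal E_{\Fin}G$, and verify that the top horizontal map $\pr_*$ is an isomorphism by a triangulated-category argument reducing to the generators of $\langle\mathcal{CI}\rangle$. Commutativity of \eqref{DL-BC-diagram} is automatic from the naturality of the Davis--L\"uck assembly map.

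For the left vertical arrow, the key point is that $\mathcal E_{\Fin}G$ has isotropy in $\Fin$. I would invoke the Atiyah--Hirzebruch spectral sequence
\[E^2_{p,q}=H_p^{\Or(G,\Fin)}\!\bigl(\mathcal E_{\Fin}G,\pi_q(\K^G_A(-))\bigr)\Rightarrow H^G_{p+q}(\mathcal E_{\Fin}G,\K^G_A)\]
together with its analogue for $\tilde A$. Since $\pi_q(\K^G_A(G/H))\cong K_q(A\rtimes_r H)$ by \eqref{KtheorySpectrumOnG/H} and $D$ is a $KK^H$-equivalence for every $H\in\Fin$, the natural transformation $D_*$ induces an isomorphism of coefficient systems on $\Or(G,\Fin)$, hence an isomorphism on $E^2$-terms, which forces an isomorphism of abutments. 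A cell-by-cell variant along a $G$-CW filtration of $\mathcal E_{\Fin}G$, whose cells have the form $G/H\times D^n$ with $H\in\Fin$, gives an equivalent argument using only that the equivariant homology theory turns cell attachments into mapping-cone sequences and commutes with filtered colimits.

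For the top horizontal arrow, I would show that the class of $A\in\mathfrak{KK}^G$ for which $\pr_*\colon H^G_*(\mathcal E_{\Fin}G,\K^G_A)\to H^G_*(\pt,\K^G_A)$ is an isomorphism forms a localizing subcategory of $\mathfrak{KK}^G$. This requires (a) that $A\mapsto\K^G_A$ descends to a functor from $\mathfrak{KK}^G$ to $\Or(G)$-spectra that preserves cofiber sequences and countable direct sums up to weak equivalence, and (b) that the induced equivariant homology groups together with $\pr_*$ inherit this structure. Granted (a) and (b), it suffices to verify the statement on the generators $A=\Ind_H^GB$ with $H\in\Fin$. On the right, Green's imprimitivity yields $K_*(\Ind_H^GB\rtimes_r G)\cong K_*(B\rtimes_r H)$; on the left, an induction isomorphism for the Davis--L\"uck theory identifies $H^G_*(\mathcal E_{\Fin}G,\K^G_{\Ind_H^GB})$ with $H^H_*(\mathcal E_{\Fin}G|_H,\K^H_B)$, and $\mathcal E_{\Fin}G|_H$ is $H$-equivariantly contractible since $H$ is finite, so this also reduces to $K_*(B\rtimes_r H)$. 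A naturality diagram then identifies $\pr_*$ with the identity under these isomorphisms.

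The main technical obstacle is the functoriality claim in (a): establishing that $A\mapsto\K^G_A$, built in this paper via Joachim's $K$-theory spectrum for $C^*$-categories, inverts $KK^G$-equivalences and is compatible with the triangulated structure of $\mathfrak{KK}^G$ already at the level of $\Or(G)$-spectra. The induction isomorphism used in (b) is similarly delicate, as it requires a genuine comparison between the $\Or(G)$-spectrum $\K^G_{\Ind_H^GB}$ and an induced $\Or(H)$-spectrum built from $\K^H_B$, a point handled only sketchily in \cite{mitchener2002c}.
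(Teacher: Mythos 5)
Your proposal follows essentially the same route as the paper: the left vertical map is handled by identifying $H^G_*(G/H,\K_A^G)$ with $K_*(A\rtimes_r H)$ via Green's imprimitivity and the $KK^H$-equivalence of $D$, then extending over the cells of $\mathcal E_{\Fin}G$; the top map is handled by showing the class of coefficients for which it is an isomorphism is localizing and checking the generators $\Ind_H^GB$ via the induction isomorphism and $H$-contractibility of $\mathcal E_{\Fin}G|_H$. The two technical points you flag --- homotopy-theoretic compatibility of $A\mapsto\K_A^G$ with the triangulated structure of $\mathfrak{KK}^G$, and the comparison of $\K^G_{\Ind_H^GB}$ with the induced $\Or(H)$-spectrum --- are exactly the content of Lemma \ref{K is triangulated} (together with the zig-zag device of Lemma \ref{kkbymorphisms}) and Theorem \ref{ind-iso-construction} in the paper.
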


Here the lower-hand map is the Davis--L\"uck assembly map and the right hand map is identical to the Meyer--Nest assembly map by \eqref{KtheorySpectrumOnG/H}. 

Let us outline the proof of the above theorem. First we prove that the map
	\[H^G_*(G/H,\K_{\tilde A}^G)\to H^G_*(G/H,\K_A^G)\]
is an isomorphism for any finite subgroup $H\subseteq G$. Indeed, by \eqref{KtheorySpectrumOnG/H} this map can be identified with the map
	\[K_*(\tilde A\rtimes_r H)\to K_*(A\rtimes_r H).\]
It is an isomorphism since $D\in KK^G(\tilde A,A)$ is a $KK^H$-equivalence. Using excision we conclude that the map 
	\[H^G_*(\mathcal E_{\Fin}G,\K_{\tilde A}^G)\to H^G_*(\mathcal E_{\Fin}G,\K_A^G)\]
is an isomorphism as well.

To prove that the upper-hand map in \eqref{DL-BC-diagram} is an isomorphism, we proceed in two steps: First we show that the class of all $\tilde A\in \mathfrak {KK}^G$, for which it \emph{is} an isomorphism, is localizing. This boils down to translating $KK^G$-equivalences, suspensions, mapping cone sequences and direct sums in $\mathfrak {KK}^G$ to stable equivalences, loops, fiber sequences and wedge sums in spectra. The next step is to show that the upper-hand map in \eqref{DL-BC-diagram} is an isomorphism for all generators $\tilde A=\Ind_H^GB\in \mathcal{CI}$. To see this, we use Green's imprimitivity theorem to construct a natural induction isomorphism
	\begin{equation}\label{introinduction}
		 H^H_*(X|_H,\K_B^H)\cong H^G_*(X,\K_{\Ind_H^GB}^G),
	\end{equation}
for any $G$-$CW$-complex $X$. We can then identify the map in question with the map
	\[H^H_*(\mathcal E_{\Fin}G|_H,\K_B^H)\to H^H_*(\pt,\K_B^H).\]
This map is an isomorphism since $H$ is finite.\\

While this work was published, our main result was proved independently by Bunke, Engel and Land \cite{bunke2021paschke} with completely different methods. 
 
\subsection*{Outline of the paper}
The paper is organized as follows: In section \ref{meyer-nest} we describe the category $\mathfrak{KK}^G$ and recall the construction of the Meyer--Nest assembly map. Section \ref{davis-lueck} contains the construction of equivariant homology theories and assembly maps from $\Or(G)$-spectra as well as some basic homotopy theory for $\Or(G)$-spectra. The results are well-known and can be found either explicitly or implicitly in \cite{davis1998spaces} and \cite{luck2003commuting}. But we hope that including them keeps the exposition reasonably self-contained. In section \ref{kspectrum} we construct the $\Or(G)$-spectrum $\K_A^G$. We begin by discussing groupoid $C^*$-algebras and their reduced crossed products. For better functoriality properties, we consider the reduced crossed product of a groupoid $C^*$-algebra as a $C^*$-\emph{category} rather than
a $C^*$-algebra. Our construction is similar to the construction in \cite{mitchener2002c}. We then recall the construction of Michael Joachims $K$-theory spectrum $\K$ for $C^*$-categories (see \cite{joachim2003k}). Finally we define $\K_A^G$ by the formula
	\[\K_A^G(G/H):=\K(A\rtimes_r \overline{G/H}),\]
where $\overline {G/H}$ denotes the transformation groupoid associated to the $G$-space $G/H$. We end the section by discussing some homotopy theoretical properties of the functor $A\mapsto \K_A^G$. In section \ref{identification} we use all the technology developed so far to construct the induction isomorphism \eqref{introinduction} and to prove Theorem \ref{intromaintheorem}. We include a discussion on variants of our results for other crossed product functors in section \ref{exotic}. \\

\subsection*{Notation}
If $\mathcal C$ is a category, we denote its homomorphism sets by $\mathcal C(x,y)$, its collection of objects by $\Ob(\mathcal C)$ and its opposite category by $\mathcal C^{\op}$. All $C^*$-algebras are complex. If $A$ is a $C^*$-algebra, we denote by $M(A)$ its multiplier algebra and by $Z(A)$ its center. If $X$ is a set, we denote by $\ell^2(X,A)$ the right Hilbert-$A$-module $\oplus_{x\in X}A$ and by $\mathcal L_A(\ell^2(X,A))$ its adjointable operators.

	\section{Meyer--Nest theory}\label{meyer-nest}
In this section, we recall the basic properties of equivariant $KK$-theory and the definition of the Meyer--Nest assembly map. Throughout this section, $G$ is a countable discrete group and all $C^*$-algebras are assumed to be separable. By a \emph{$G$-Hilbert space} we mean a Hilbert space $\mathcal H$ together with a unitary representation $u:G\to U(\mathcal H)$. We denote the algebra of compact operators on $\mathcal H$ by $\mathcal K(\mathcal H)$ and equip it with the $G$-action given by conjugation with $u$. We denote by $\mathfrak C^*_G$ the category of all separable $G$-$C^*$-algebras with $G$-equivariant $*$-homomorphisms. For two $G$-$C^*$-algebras $A$ and $B$, the tensor product $A\otimes B$ denotes the \emph{minimal} tensor product with the natural $G$-action. We denote the reduced crossed product of $A$ and $G$ by $A\rtimes_r G$. The \emph{suspension} of $A$ is the $G$-$C^*$-algebra $SA:= C_0((0,1))\otimes A\cong C_0((0,1),A)$ with the trivial $G$-action on the first factor. The \emph{mapping cone} of a morphism $\pi:A\to B$ is given by
	\[\Cone(\pi):= \{(a,b)\in A\oplus C_0((0,1],B)\mid \pi(a)=b(1)\}.\]
The \emph{mapping cone triangle} associated to $\pi$ is the sequence
	\[SB\to \Cone(\pi)\to A\xrightarrow{\pi} B\]
where the first map is given by inclusion and the second map is given by evaluation at $1$. We also call $\Cone(\pi)\to A\to B$ a \emph{mapping cone sequence}. A short exact sequence 
	\[0\to I\to A\xrightarrow{\pi} B\to 0\]
of $G$-$C^*$-algebras is called \emph{split exact}, if there is a $G$-equivariant $*$-homomorphism $\sigma:B\to A$ satisfying $\pi \sigma =\id_B$. For a subgroup $H\subseteq G$, we denote by $\Res_G^H:\mathfrak C^*_G\to \mathfrak C^*_H$ the obvious restriction functor. Let $B$ be an $H$-$C^*$-algebra with $H$-action $\beta$. The \emph{induced algebra} $\Ind_H^GB$ is the $C^*$-algebra of all bounded functions $f:G\to B$ satisfying $f(gh)=\beta_{h^{-1}}(f(g))$ for all $g\in G$ and $h\in H$, such that the function $gH\mapsto \|f(gH)\|$ belongs to $C_0(G/H)$. We equip $\Ind_H^GB$ with the $G$-action given by left translation.

The following theorem is a collection of well-known results on equivariant $KK$-theory. For more details we refer to \cite{meyer2008categorical} and the references therein.
\begin{thm}
	There is an additive category $\mathfrak {KK}^G$ with the same objects as $\mathfrak C^*_G$ and a functor 
		$KK^G:\mathfrak C^*_G\to \mathfrak{KK}^G$
	with the following properties:
		\begin{enumerate}
			\item $KK^G$ is $G$-homotopy invariant.
			\item For any two separable $G$-Hilbert spaces $\mathcal H,\mathcal H'$ and any $G$-$C^*$-algebra $A$, the stabilization morphism
				\[A\otimes \mathcal K(\mathcal H)\to A\otimes \mathcal K(\mathcal H\oplus \mathcal H')\]
			is mapped to an isomorphism in $\mathfrak {KK}^G$.
			\item Any split exact sequence $0\to I\to A\to B\to 0$ of $G$-$C^*$-algebras is mapped to a split exact sequence in $\mathfrak {KK}^G$.
			\item $KK^G:\mathfrak C^*_G\to \mathfrak {KK}^G$ is universal with the above properties in the sense that any other functor from $\mathfrak C^*_G$ into an additive category with the above properties uniquely factors through $KK^G$.
			\item The category $\mathfrak {KK}^G$ is triangulated with respect to the suspension functor $S$ and the mapping cone triangles 
				\[SB\to \Cone(\pi)\to A\xrightarrow{\pi} B.\]
				We write $KK^G_n(A,B):= KK^G(A,S^nB):=\mathfrak{KK}^G(A,S^n B)$. 
			\item We have Bott periodicity: $KK^G_n(A,B)\cong KK^G_{n+2}(A,B)$.
			\item Topological $K$-theory is given by $K_*(A)\cong KK_*(\C,A):=KK_*^{\{e\}}(\C,A)$. 
			\item Let $H\subseteq G$ be a subgroup and $A$ a $G$-$C^*$-algebra. Then the functors 
				\begin{equation}\label{functorsinC*}
				\begin{aligned}
					\Ind_H^G:&~\mathfrak C^*_H \to \mathfrak C^*_G \\
					\Res_G^H:&~\mathfrak C^*_G \to \mathfrak C^*_H \\
					\rtimes_r G: &~\mathfrak C^*_G\to \mathfrak C^*\\
					\otimes A: &~\mathfrak C^*_G \to \mathfrak C^*_G
				\end{aligned}
				\end{equation}
				
				uniquely extend to functors 
				\begin{equation}\label{functorsinKK}
				\begin{aligned}
					\Ind_H^G:&~\mathfrak{KK}^H \to \mathfrak{KK}^G \\
					\Res_G^H:&~\mathfrak{KK}^G \to \mathfrak{KK}^H \\
					\rtimes_r G: &~\mathfrak{KK}^G\to \mathfrak{KK}\\
					\otimes A: &~\mathfrak{KK}^G \to \mathfrak{KK}^G.
				\end{aligned}
				\end{equation}
				Natural transformations between the functors in \eqref{functorsinC*} are in bijection with natural transformations between the corresponding functors in \eqref{functorsinKK}. Furthermore, $\Ind_H^G:\mathfrak {KK}^H\to \mathfrak {KK}^G$ is left adjoint to $\Res_G^H:\mathfrak {KK}^G\to \mathfrak {KK}^H$. 
		\end{enumerate}
\end{thm}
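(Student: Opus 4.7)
The plan is to assemble this omnibus theorem from the standard literature on equivariant $KK$-theory, reducing each item to work of Kasparov or of Meyer and Nest wherever possible. The starting point is Kasparov's original construction \cite{Kasparov1988}: $KK^G(A,B)$ is defined as the set of homotopy classes of equivariant Kasparov $(A,B)$-bimodules, with composition given by the Kasparov product. This produces the additive category $\mathfrak{KK}^G$ and the functor $KK^G$ directly, and properties (i) and (iii) follow immediately from Kasparov's framework. Property (ii) is a Morita invariance statement: both $A\otimes\mathcal{K}(\mathcal H)$ and $A\otimes\mathcal{K}(\mathcal H\oplus\mathcal H')$ are $KK^G$-equivalent to $A$ via the Hilbert modules $A\otimes\mathcal H$ and $A\otimes(\mathcal H\oplus\mathcal H')$, and the stabilization morphism realizes the resulting composite of equivalences. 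Properties (vi) and (vii) are Kasparov's equivariant Bott periodicity and the classical identification of $KK_*(\mathbb{C},A)$ with the topological $K$-theory of $A$.

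Property (iv), the universal property of $KK^G$, is the technical heart of the reduction strategy: one checks that any functor from $\mathfrak C^*_G$ to an additive category satisfying (i)--(iii) factors uniquely through $KK^G$. For the non-equivariant case this is Higson's theorem, and the argument has been extended to the equivariant setting in the literature cited in \cite{meyer2008categorical}. Once (iv) is available, property (viii) is comparatively painless: the $C^*$-algebraic tensor product, restriction and crossed product functors each preserve $G$-homotopy, stabilization and split exactness, so they factor uniquely through $KK^G$ by universality. Induction requires a small additional check but proceeds along the same lines. The adjunction $\Ind_H^G \dashv \Res_G^H$ is a $KK$-theoretic incarnation of Green's imprimitivity theorem: one exhibits the unit and counit via natural Morita equivalences at the $C^*$-algebra level and lifts them to $\mathfrak{KK}^H$ and $\mathfrak{KK}^G$ using (iv) and naturality.

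The most delicate item is property (v), the triangulated structure. The main technical inputs are that every morphism in $\mathfrak{KK}^G$ is represented, after suitable stabilization, by an honest equivariant $*$-homomorphism (Cuntz's picture of $KK$), so that mapping cones are defined on every morphism, and that the mapping cone is well-defined up to $KK^G$-equivalence. With these in hand one declares a triangle to be distinguished if it is isomorphic in $\mathfrak{KK}^G$ to a mapping cone triangle, and verifies the axioms of a triangulated category following Meyer and Nest \cite{meyer2008categorical}. I expect the verification of the octahedral axiom to be the main obstacle, since it requires careful manipulations of mapping cones at the $C^*$-algebra level that must then be translated into equivalences in $\mathfrak{KK}^G$; everything else reduces either to Kasparov's constructions or to the universal property already established.
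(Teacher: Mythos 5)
Your proposal is correct and follows essentially the same route as the paper, which offers no proof of its own but simply defers to the standard literature (Kasparov \cite{Kasparov1988}, Meyer--Nest \cite{meyer2004baum}, and the survey \cite{meyer2008categorical}); your sketch is an accurate roadmap through exactly those sources, including the universal-property reduction for item (viii) and the Meyer--Nest verification of the triangulated axioms for item (v).
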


The isomorphisms in $\mathfrak {KK}^G$ are also called \emph{$KK^G$-equivalences}. A $G$-$C^*$-algebra $A$ is called \emph{$KK^G$-contractible}, if it is isomorphic to $0$ in $\mathfrak{KK}^G$.

\begin{defn}\label{localizingdefinition}
	A full subcategory $\mathcal C\subseteq \mathfrak {KK}^G$ is called \emph{localizing}, if it is closed under $KK^G$-equivalence, suspension, mapping cones and countable direct sums. Being closed under mapping cones means that if $\Cone(\pi)\to A\xrightarrow{\pi} B$ is a mapping cone sequence and if $A$ and $B$ belong to $\mathcal C$, then $\Cone(\pi)$ also belongs to $\mathcal C$.
\end{defn}

Since exact triangles may be rotated, the algebras $\Cone(\pi),A$ and $B$ belong to a localizing subcategory $\mathcal C$ if at least two of them belong to $\mathcal C$. The restriction to \emph{countable} direct sums in the above definition is necessary in order to stay in the realm of separable $C^*$-algebras. For any full subcategory $\mathcal C\subseteq \mathfrak {KK}^G$, there is a smallest localizing subcategory $\langle \mathcal C\rangle \subseteq \mathfrak{KK}^G$ containing $\mathcal C$.

\begin{defn}[{\cite[Def. 4.1]{meyer2004baum}}]
	Let $\mathcal {CI}\subseteq \mathfrak {KK}^G$ denote the full subcategory of $G$-$C^*$-algebras of the form $\Ind_H^GB$, where $H\subseteq G$ is a finite subgroup and $B$ is an $H$-$C^*$-algebra. 
	Let $\mathcal {CC}\subseteq \mathfrak {KK}^G$ denote the full subcategory of $G$-$C^*$-algebras $N$, such that $N$ is $KK^H$-contractible for any finite subgroup $H\subseteq G$. 
\end{defn}

\begin{thm}[{\cite[Thm. 4.7]{meyer2004baum}}]\label{Meyernesttheorem}
	The localizing subcategories $\langle \mathcal {CI}\rangle\subseteq \mathfrak{KK}^G$ and $ \mathcal{CC} \subseteq \mathfrak {KK}^G$ are complementary in the following sense:
		\begin{enumerate}
			\item For any $A\in \langle \mathcal{CI}\rangle$ and $B\in \mathcal{CC}$, we have $KK^G(A,B)=0$.
			\item For any $G$-$C^*$-algebra $A$, there is an exact triangle 
		\[SN\to \tilde A\xrightarrow{D} A\to N\]
	with $N\in \mathcal {CC}$ and $\tilde A\in \langle \mathcal {CI}\rangle$. The above triangle is unique up to isomorphism.
		\end{enumerate}		 
\end{thm}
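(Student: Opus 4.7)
My plan is to dispose of the two parts separately: part (i) is a standard adjunction-plus-cohomological-functor argument, whereas part (ii) requires actually constructing an approximation $\tilde A\to A$, which I would do via a phantom-tower construction internal to $\mathfrak{KK}^G$.

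For part (i), fix $B\in\mathcal{CC}$ and first dispose of the generators. For $I=\Ind_H^G B'\in\mathcal{CI}$ with $H\subseteq G$ finite, the adjunction $\Ind_H^G\dashv\Res_G^H$ in $\mathfrak{KK}^G$ (item (viii) of the preceding theorem) gives
\[ KK^G_*(\Ind_H^G B',B)\;\cong\; KK^H_*(B',\Res_G^H B)\;=\;0, \]
since $\Res_G^H B\cong 0$ in $\mathfrak{KK}^H$ by the very definition of $\mathcal{CC}$. Now set
\[ \mathcal{N}(B):=\bigl\{A\in\mathfrak{KK}^G\;:\;KK^G_*(A,B)=0\bigr\}. \]
The bifunctor $KK^G_*(-,B)$ is cohomological on the triangulated category $\mathfrak{KK}^G$, respects suspension (up to the shift in degree), is invariant under $KK^G$-equivalences, and converts countable coproducts of separable $G$-$C^*$-algebras into products. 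Hence $\mathcal{N}(B)$ is closed under the four operations of Definition \ref{localizingdefinition}, so it is localising. Since $\mathcal{CI}\subseteq\mathcal{N}(B)$ by the above, we conclude $\langle\mathcal{CI}\rangle\subseteq\mathcal{N}(B)$, which is assertion (i).

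For part (ii), I would construct the triangle as a phantom tower. Set $\tilde A_0:=0$ and $f_0:=0:\tilde A_0\to A$ with first cone $N_0:=A$. Inductively, suppose $f_n:\tilde A_n\to A$ is given with mapping cone $N_n$. Enumerate a countable generating set for each abelian group $KK^H_k(\C,\Res_G^H N_n)$ as $H\subseteq G$ ranges over finite subgroups and $k\in\Z$; countability follows from separability of all algebras involved. The adjunction of (i) turns each such element into a morphism $\Ind_H^G S^k\C\to N_n$ in $\mathfrak{KK}^G$, and packaging these gives a map $\phi_n:\bigoplus_\alpha I_\alpha\to N_n$ with $I_\alpha\in\mathcal{CI}$. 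Applying the octahedral axiom to $\tilde A_n\to A\to N_n\leftarrow\bigoplus_\alpha I_\alpha$ yields $\tilde A_{n+1}$ together with an extension $f_{n+1}:\tilde A_{n+1}\to A$ of $f_n$, where $\tilde A_{n+1}$ fits in a mapping cone triangle with $\tilde A_n$ and $\bigoplus_\alpha I_\alpha$ and hence lies in $\langle\mathcal{CI}\rangle$. Setting $\tilde A:=\hocolim_n\tilde A_n$, computed as the mapping cone of $\id-\mathrm{shift}$ on $\bigoplus_n\tilde A_n$ (this is precisely why countable coproducts appear in the definition of localising), one obtains $\tilde A\in\langle\mathcal{CI}\rangle$ and an induced $D:\tilde A\to A$ whose cone $N$ is by construction $KK^H$-contractible for every finite $H\subseteq G$. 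Uniqueness is then automatic from (i): any two such triangles over $A$ admit comparison maps because the obstruction groups $KK^G(\tilde A,N')$ vanish by (i), and these maps are isomorphisms by the triangulated five-lemma.

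The main obstacle I anticipate lies in the phantom-tower step: making the construction honestly produce $N\in\mathcal{CC}$ requires delicate bookkeeping to ensure that every element of every $KK^H_k(\C,\Res_G^H N_n)$ is eventually killed, simultaneously for all finite $H$ and all $k$, and that sequential homotopy colimits in $\mathfrak{KK}^G$ interact correctly with $\Res_G^H$ and with $KK^H_*(\C,-)$. Concretely, one needs a Milnor-type $\varprojlim^1$ exact sequence for $KK^H_*$ applied to the telescope, together with the fact that separability forces the relevant abelian groups to be countable; these are the non-formal ingredients that prevent the argument from reducing to pure triangulated-category manipulation.
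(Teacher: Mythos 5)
This theorem is not proved in the paper at all: it is quoted verbatim from Meyer--Nest \cite[Thm.~4.7]{meyer2004baum}, so the only comparison available is with their argument. Your part (i) is correct and is exactly their argument: the adjunction $\Ind_H^G\dashv\Res_G^H$ kills the generators, and $\mathcal N(B)$ is localizing because $KK^G_*(-,B)$ is cohomological, stable, and turns countable direct sums (which are coproducts in $\mathfrak{KK}^G$) into products. The skeleton of part (ii) --- phantom tower, telescope as the cone of $\id-\mathrm{shift}$, uniqueness via (i) and the five-lemma --- is also the right one.

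The gap is in your choice of projective cover $\phi_n$. You attach cells $\Ind_H^GS^k\C$ to kill generators of $KK^H_k(\C,\Res_G^HN_n)\cong K_k(N_n\rtimes H)$. Even if you succeed in killing all of these groups in the limit, what you obtain is an $N$ with $K_*(N\rtimes H)=0$ for all finite $H$; but membership in $\mathcal{CC}$ requires $N$ to be $KK^H$-\emph{contractible}, i.e.\ $\id_N=0$ in $KK^H(N,N)$, which is a far stronger condition. The implication ``$K^H_*(N)=0\Rightarrow N\cong 0$ in $\mathfrak{KK}^H$'' would say that $\C$ generates $\mathfrak{KK}^H$ as a localizing subcategory; this fails already for $H$ trivial, where the localizing subcategory generated by $\C$ is the bootstrap class, a proper subcategory of $\mathfrak{KK}$. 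So your tower does not land in $\mathcal{CC}$. The repair, which is what Meyer and Nest actually do, is to take at each stage the single map
\[ \phi_n:\ \bigoplus_{H\subseteq G\ \text{finite}}\Ind_H^G\Res_G^H N_n\ \longrightarrow\ N_n \]
given by the counits of the adjunctions (a countable sum, since $G$ is countable). The triangle identity shows that $\Res_G^H(\phi_n)$ is a split epimorphism in $\mathfrak{KK}^H$ for every finite $H$, so the connecting map $N_n\to N_{n+1}$ of the tower lies in the ideal $\bigcap_H\ker\Res_G^H$. Since $\Res_G^H$ commutes with countable direct sums and hence with the telescope, $\Res_G^H N$ is the homotopy colimit of a tower of zero maps, i.e.\ the cone of an isomorphism, hence $0$; this puts $N$ in $\mathcal{CC}$ outright and also disposes of the $\varprojlim^1$ issues you were worried about, since no limit of K-theory groups ever needs to be computed.
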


\begin{rem}
	The morphism $D:\tilde A\to A$ is called the \emph{Dirac morphism}. Note that it follows from the adjunction of $\Ind_H^G$ and $\Res_G^H$ that $D$ is a $KK^H$-equivalence for any finite subgroup $H\subseteq G$. 
\end{rem}

\begin{thm}[{\cite[Thm. 5.2]{meyer2004baum}}]
	The indicated maps in the following diagram are isomorphisms.
	\begin{equation*}
		\begin{tikzcd}
			&K^G_*(\mathcal E_{\Fin}G,\tilde A)\arrow[r,"\cong","\mu"']\arrow[d,"\cong","D_*"'] &K_*(\tilde A\rtimes_rG)\arrow[d,"D_*"]\\
			&K^G_*(\mathcal E_{\Fin}G,A)\arrow[r,"\mu"]					&K_*(A\rtimes_rG)
		\end{tikzcd}
	\end{equation*}
	In particular, the Baum--Connes assembly map can canonically be identified with the map
		\begin{equation*}
			D_*:K_*(\tilde A\rtimes_r G)\to K_*(A\rtimes_r G).
		\end{equation*}
\end{thm}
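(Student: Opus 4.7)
The plan is to prove the two asserted isomorphisms independently, exploiting the complementary pair $(\langle \mathcal{CI}\rangle, \mathcal{CC})$ from Theorem \ref{Meyernesttheorem} for the left vertical arrow and a localizing subcategory argument for the top horizontal one.

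For the left vertical map, I would start from the exact triangle $SN \to \tilde A \xrightarrow{D} A \to N$ with $N \in \mathcal{CC}$ provided by Theorem \ref{Meyernesttheorem}. Since $B \mapsto K^G_*(\mathcal E_{\Fin}G, B)$ is a homological functor on $\mathfrak{KK}^G$, it suffices to show that it vanishes on $\mathcal{CC}$. On a single orbit one has $K^G_*(G/H, N) \cong K_*(N \rtimes_r H) \cong KK^H_*(\mathbb C, N)$, which is zero whenever $H \in \Fin$ and $N \in \mathcal{CC}$. A Mayer--Vietoris or skeletal spectral sequence argument applied to any cellular decomposition of $\mathcal E_{\Fin}G$ (whose cells are of type $G/H$ with $H \in \Fin$) then yields $K^G_*(\mathcal E_{\Fin}G, N) = 0$, and the long exact sequence associated to the triangle gives the desired isomorphism.

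For the top horizontal map, I would observe that both source and target of $\mu$ are homological functors on $\mathfrak{KK}^G$ commuting with countable direct sums, and that $\mu$ is a natural transformation. The full subcategory of $\mathfrak{KK}^G$ on which $\mu$ is an isomorphism is therefore localizing, so by construction of $\langle \mathcal{CI}\rangle$ it suffices to verify the claim on generators $\Ind_H^G B$ with $H \in \Fin$. For such an algebra, Green's imprimitivity theorem supplies a natural Morita, hence $KK$, equivalence $\Ind_H^G B \rtimes_r G \sim B \rtimes_r H$, while the adjunction between $\Ind_H^G$ and $\Res_G^H$ together with a change-of-groups identity identifies $K^G_*(\mathcal E_{\Fin}G, \Ind_H^G B)$ with $K^H_*(\Res_G^H \mathcal E_{\Fin}G, B)$. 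Because $H$ is finite, $\Res_G^H \mathcal E_{\Fin}G$ is a model for $\mathcal E_{\Fin} H = \pt$, so the left-hand side further reduces to $K^H_*(\pt, B) \cong K_*(B \rtimes_r H)$, and $\mu$ becomes the identity under these identifications.

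The hard part will be this last matching. While the reduction to generators is essentially formal, verifying that $\mu$, built from the Kasparov product with a class supported on $\mathcal E_{\Fin}G$, intertwines Green imprimitivity and the induction isomorphism so as to become the identity on $\Ind_H^G B$ requires unwinding the definition of the assembly map and the compatibility of descent with induction. A secondary technical point is to confirm that $K^G_*(\mathcal E_{\Fin}G, -)$ genuinely factors through $\mathfrak{KK}^G$ and preserves countable coproducts; without this, the localizing subcategory argument does not get off the ground.
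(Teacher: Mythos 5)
This theorem is quoted from Meyer--Nest and the paper does not reprove it, but your outline is exactly the strategy of \cite{meyer2004baum} that the paper explicitly mirrors in Section \ref{identification} for the Davis--L\"uck map: vanishing of $K^G_*(\mathcal E_{\Fin}G,-)$ on $\mathcal{CC}$ gives the vertical isomorphism, and a localizing-subcategory reduction to the generators $\Ind_H^GB$ via Green's imprimitivity theorem and the finiteness of $H$ gives the horizontal one. Your proposal is correct in outline, and the two technical points you flag (that $K^G_*(\mathcal E_{\Fin}G,-)$ is a coproduct-preserving homological functor on $\mathfrak{KK}^G$, and that $\mu$ intertwines the imprimitivity and induction identifications) are precisely the steps that carry the real content in the cited proof.
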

	
We call the above map the \emph{Meyer--Nest assembly map.}

\section{Davis--L\"uck theory}\label{davis-lueck}

In this section, we recall the basic machinery of \cite{davis1998spaces} in order to write down the Davis--L\"uck assembly map. We also state some homotopy theoretical results which will allow us to prove that the class of $G$-$C^*$-algebras, for which the Davis--L\"uck assembly map is an isomorphism, is localizing. \\

Throughout this section, we work in the category of compactly generated weak Hausdorff spaces with continuous maps (see \cite{strickland2009compactly}) and denote this category by $\Top$. Similarly, we denote the category of pointed compactly generated weak Hausdorff spaces with pointed continuous maps by $\Top_*$. These categories are closed symmetric monoidal with respect to the product $X\times Y$ respectively the smash product $X\wedge Y$. We denote the mapping spaces by $\Top(X,Y)$ respectively $\Top_*(X,Y)$. We write $X_+:= X\coprod \{+\}$ to equip a space $X$ with a disjoint basepoint $+$ and reserve the notation $Y^+$ for the one-point compactification of a locally compact space $Y$. We use the notation $\Omega X:=\Top_*(S^1,X)$ and $\Sigma X:= S^1\wedge X$ to denote the loop space and the suspension of a pointed space $X$. Recall that there is a natural adjunction homeomorphism
	\begin{equation}\label{mappingloopsadjunction}
		\Top_*(\Sigma X,Y)\cong \Top_*(X,\Omega Y).
	\end{equation}
We denote by $\pi_n(X):=\pi_0(\Omega^n X), n\geq 0$ the \emph{$n$-th} homotopy group of a pointed space $X$. A pointed map is called a \emph{weak equivalence}, if it induces an isomorphism on all homotopy groups. For a discrete group $G$, we denote by $\Top^G$ the category of (compactly generated weak Hausdorff) $G$-spaces and $G$-equivariant maps. We equip the mapping spaces $\Top^G(X,Y)$ with the topology inherited from the inclusion $\Top^G(X,Y)\subseteq \Top(X,Y)$. 

\subsection*{Spaces and spectra over the orbit category}

\begin{defn}
	A spectrum $\bm E$ is a sequence of pointed spaces $\bm E_n,n\geq 0$ together with pointed maps $\bm E_n\to \Omega \bm E_{n+1}$ called \emph{structure maps}. A map $f:\bm E\to \bm F$ of spectra is a sequence of pointed maps $f_n:\bm E_n\to \bm F_n$ which commute with the structure maps. We denote the category of spectra by $\Sp$. 
\end{defn}

\begin{defn}
	Let $\bm E$ be a spectrum. For $n\in \Z$, the \emph{$n$-th homotopy group} of $\bm E$ is the group
		\[\pi_n(\bm E):=\underset{k\to \infty}\colim\pi_{n+k}(\bm E_k).\]
	Here the colimit is taken with respect to the maps 
		\[\pi_{n+k}(\bm E_k)\to \pi_{n+k}(\Omega \bm E_{k+1})\cong \pi_{n+k+1}(\bm E_{k+1}).\]
	A map of spectra is called a \emph{stable equivalence}, if it induces an isomorphism on all homotopy groups.
\end{defn}

\begin{defn}
	Let $G$ be a discrete group. The \emph{orbit category} $\Or(G)$ is the category of all homogeneous $G$-sets $G/H$ together with $G$-equivariant maps. 
\end{defn}

\begin{defn}[{\cite[Def. 1.2]{davis1998spaces}}]
	A pointed $\Or(G)$-space is a functor $X:\Or(G)\to \Top_*$. A map of pointed $\Or(G)$-spaces is a natural transformation of the underlying functors. Analogously, we define (pointed) $\Or(G)^{\op}$-spaces and $\Or(G)$-spectra. 
\end{defn}

\begin{eg}
	Let $X$ be a $G$-space. We can define a pointed $\Or(G)^{\op}$-space 
		\[G/H\mapsto \Top^G(G/H,X)_+\cong X^H_+\]
	where $X^H\subseteq X$ denotes the space of $H$-fixed-points.
\end{eg}

\begin{defn}[{\cite[Def. 1.4]{davis1998spaces}}]\label{balancedsmashproduct}
	Let $X$ be a pointed $\Or(G)^{\op}$-space and $Y$ a pointed $\Or(G)$-space. The \emph{balanced smash product} of $X$ and $Y$ is the pointed space
		\[X\wedge_{\Or(G)}Y:=\left ( \bigvee_{G/H\in \Or(G)}X(G/H)\wedge Y(G/H)\right)/\sim\]
	where the equivalence relation $\sim$ is generated by the relations
		\[f^*x\wedge y\sim x\wedge f_*y,\quad x\in X(G/H), y\in Y(G/K), f\in \Or(G)(G/K,G/H).\]
	If $\bm E$ is an $\Or(G)$-spectrum, we define the \emph{balanced smash product} ${X\wedge_{\Or(G)}\bm E}$ of $X$ and $\bm E$ as the spectrum given by the sequence of pointed spaces ${X\wedge_{\Or(G)}\bm E_n}, {n\in \N}$ with structure maps given by the adjoints of the natural maps 
		\[(X\wedge_{\Or(G)} \bm E_n)\wedge S^1\cong X\wedge_{\Or(G)}(\bm E_n\wedge S^1)\to X\wedge \bm E_{n+1}\]
		under the adjunction \eqref{mappingloopsadjunction}.
\end{defn}

\begin{defn}[{cp. \cite[Def. 4.3]{davis1998spaces}}]
	Let $X$ be a $G$-$CW$-complex and $\bm E$ an $\Or(G)$-spectrum. The \emph{$G$-equivariant homology of $X$ with coefficients in $\bm E$} is given by
		\[H^G_*(X,\bm E):=\pi_*(\Top^G(-,X)_+\wedge_{\Or(G)}\bm E).\]
\end{defn}

\begin{rem}\label{orGextension}
	Note that there is a natural homeomorphism 
		\[\Top^G(-,G/H)_+\wedge_{\Or(G)}\bm E\to \bm E(G/H),\quad f\wedge x\mapsto f_*(x)\]
	for any subgroup $H\subseteq G$. In particular, we have a natural isomorphism 
		\[H^G_*(G/H,\bm E)\cong \pi_*\bm E(G/H).\]
\end{rem}

\begin{prop}[{\cite[Lem. 4.4]{davis1998spaces}}]
	The functor $H^G_*(-,\bm E)$ defines a generalized homology theory for $G$-$CW$-complexes. 
\end{prop}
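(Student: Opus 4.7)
The plan is to verify the standard axioms for an equivariant generalized homology theory on the category of $G$-CW-pairs: $G$-homotopy invariance, a natural long exact sequence of a pair, excision, and the disjoint union axiom. It is convenient to factor the construction $X\mapsto H^G_*(X,\bm E)$ into three steps: (i) $X\mapsto \Top^G(-,X)_+$ from $G$-spaces to pointed $\Or(G)^{\op}$-spaces, (ii) the balanced smash product $(-)\wedge_{\Or(G)}\bm E$ from pointed $\Or(G)^{\op}$-spaces to spectra, and (iii) the stable homotopy functor $\pi_*$. I would then verify that each axiom passes through all three steps.

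Homotopy invariance and additivity are essentially formal. A $G$-homotopy $h\colon X\times [0,1]\to Y$ yields, by applying $\Top^G(G/H,-)$ for each $H$, an ordinary homotopy of pointed $\Or(G)^{\op}$-spaces $\Top^G(-,X)_+\wedge [0,1]_+\to \Top^G(-,Y)_+$, which upon smashing with $\bm E$ descends to a spectrum-level homotopy and hence induces the same map on $\pi_*$. For additivity, the natural identifications $\Top^G(-,\coprod_iX_i)_+\cong \bigvee_i\Top^G(-,X_i)_+$ and the fact that the coend $(-)\wedge_{\Or(G)}\bm E$ preserves wedges yield $H^G_*(\coprod_iX_i,\bm E)\cong \bigoplus_i H^G_*(X_i,\bm E)$.

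The long exact sequence of a $G$-CW-pair $(X,A)$ is the key structural input. Since for each $H\leq G$ the fixed-point inclusion $A^H\hookrightarrow X^H$ is a cofibration of CW-complexes with quotient $(X/A)^H$, the map $\Top^G(-,A)_+\to \Top^G(-,X)_+$ is a cofibration of pointed $\Or(G)^{\op}$-spaces with cofiber naturally isomorphic to $\Top^G(-,X/A)_+$. Smashing with $\bm E$ then produces a cofiber sequence of spectra, and applying $\pi_*$ yields the long exact sequence. Excision is equivalent to the natural identification $H^G_*(X,A)\cong \tilde H^G_*(X/A,\bm E)$, which is immediate from the same cofiber sequence.

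The main obstacle is the homotopical behaviour of the balanced smash product: one must check that it transforms levelwise cofibrations of sufficiently nice $\Or(G)^{\op}$-spaces into cofibrations of spectra, and that it respects wedges and weak equivalences. The cleanest way is to use that $X\mapsto \Top^G(-,X)_+$ takes $G$-CW-complexes to $\Or(G)^{\op}$-spaces built cellularly from free cells of the form $\Top^G(-,G/H)_+\wedge D^n_+$, for which the coend reduces by Remark \ref{orGextension} to $\bm E(G/H)\wedge D^n_+$. The required cofibration, wedge-preservation, and homotopy-invariance properties then follow by induction along the cellular filtration, which also ensures compatibility with sequential colimits and reduces all verifications for arbitrary $G$-CW-complexes to the case of cells.
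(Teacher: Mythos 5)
The paper does not prove this proposition itself but cites \cite[Lem.~4.4]{davis1998spaces}, and your sketch is a faithful reconstruction of that standard argument: reduce everything to the free $\Or(G)$-CW structure of $\Top^G(-,X)_+$ and push cofiber sequences, wedges, and homotopies through the balanced smash product cell by cell, using that on a free cell the coend collapses to $\bm E(G/H)\wedge D^n_+$. Your outline is correct and takes essentially the same route as the cited proof.
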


\begin{defn}
	A collection $\mathcal F$ of subgroups of $G$ is called a \emph{family of subgroups}, if it is closed under conjugation and taking subgroups. A \emph{classifying space for $\mathcal F$} is a $G$-$CW$-complex $\mathcal E_\mathcal FG$ such that the fixed points $(\mathcal E_\mathcal FG)^H$ with respect to a subgroup $H\subseteq G$ are contractible for $H\in \mathcal F$ and empty for $H\notin \mathcal F$. 
\end{defn}

\begin{lem}[{\cite[Sec. 7]{davis1998spaces}}]
	For any family $\mathcal F$ of subgroups of $G$, there is a classifying space $\mathcal E_\mathcal F G$. Furthermore, $\mathcal E_\mathcal F G$ is unique up to $G$-homotopy equivalence. 
\end{lem}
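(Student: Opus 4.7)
My plan is to establish existence by a cell-attachment construction and uniqueness via the equivariant Whitehead theorem.

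For existence, I would build $\mathcal E_\mathcal F G$ as a $G$-$CW$-complex all of whose cells have the form $G/H\times D^n$ with $H\in \mathcal F$. Start with a $0$-skeleton $X_0:=\coprod_H G/H$, indexed by conjugacy-class representatives of subgroups in $\mathcal F$. Inductively, given the $n$-skeleton $X_n$, define $X_{n+1}$ by attaching a free $G$-cell $G/H\times D^{n+1}$ for every $H\in \mathcal F$ and every continuous map $\alpha\colon S^n\to X_n^H$, using the $G$-equivariant extension of $\alpha$ as the attaching map. Set $X:=\colim_n X_n$. Two things need to be checked. For $K\notin \mathcal F$: every cell has isotropy $H\in \mathcal F$, and $(G/H)^K=\{gH\mid g^{-1}Kg\subseteq H\}=\emptyset$ by closure of $\mathcal F$ under conjugation and subgroups, so $X^K=\emptyset$. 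For $K\in \mathcal F$: the fixed-point functor $(-)^K$ preserves the relevant pushouts, and by design every sphere in $X^K$ bounds a disk one stage later, so $\pi_n(X^K)=0$ for all $n\geq 0$; being a nonempty $CW$-complex with vanishing homotopy, $X^K$ is contractible.

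For uniqueness, let $X$ and $Y$ be two classifying spaces for $\mathcal F$. I would construct a $G$-map $f\colon X\to Y$ by induction on the $G$-skeleta of $X$. Extending $f$ across a cell $G/H\times D^n$ (necessarily with $H\in \mathcal F$, since all isotropies occurring in $X$ lie in $\mathcal F$) reduces, by the standard adjunction between $G$-maps out of $G/H\times(-)$ and ordinary maps into $(-)^H$, to extending a map $S^{n-1}\to Y^H$ to $D^n$; this is possible because $Y^H$ is contractible. A symmetric argument produces a $G$-map $g\colon Y\to X$. The compositions $g\circ f$ and $\id_X$ land, on $H$-fixed points for $H\in \mathcal F$, in the contractible space $X^H$ and are thus homotopic there, while for $H\notin \mathcal F$ both maps are the empty map. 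The equivariant Whitehead theorem then yields $G$-homotopies $g\circ f\simeq \id_X$ and $f\circ g\simeq \id_Y$.

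The main obstacle is the book-keeping of the cell-attachment construction in the existence part: checking that $(-)^K$ commutes with the pushouts one uses, and that iteratively attaching cells for \emph{all} pairs $(H,\alpha)$ really drives the homotopy of every $X^K$, $K\in \mathcal F$, to zero without reintroducing nontrivial classes along the way. These are standard manipulations in equivariant homotopy theory and introduce no real conceptual difficulty, but they require some care in the set-theoretic indexing.
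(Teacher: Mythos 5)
The paper does not prove this lemma at all --- it is imported verbatim from \cite[Sec.~7]{davis1998spaces} as a known fact, so there is no in-paper argument to compare against. Your proof is the standard one (essentially tom Dieck's construction) and is correct in substance: the cell-attachment produces a $G$-$CW$-complex with all isotropy in $\mathcal F$, the fixed-point functor $(-)^K$ commutes with the pushouts and sequential colimits involved because the attaching maps are closed $G$-cofibrations, and the two points you flag as requiring care are indeed the only delicate ones. For the claim $\pi_n(X^K)=0$ you do need one more word than ``bounds a disk one stage later'': a map $S^n\to X^K$ need not land in $X_n^K$, so one first compresses it into the $n$-skeleton by cellular approximation (note $X_n^K$ \emph{is} the $n$-skeleton of the $CW$-complex $X^K$), and one uses that attaching $(n{+}1)$-cells is $n$-connected so no new classes in $\pi_m$, $m\le n$, appear at later stages. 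In the uniqueness step it is slightly cleaner to apply the equivariant Whitehead theorem directly to $f\colon X\to Y$ --- each $f^H$ is a map between contractible (or both empty) spaces, hence a weak equivalence on all fixed-point sets, so $f$ is a $G$-homotopy equivalence --- rather than to the composites $g\circ f$ and $f\circ g$. For contrast, Davis and L\"uck themselves obtain $\mathcal E_{\mathcal F}G$ from their orbit-category machinery (a free $\Or(G,\mathcal F)$-$CW$-approximation of the constant functor, i.e.\ a bar-construction-type model), which is less hands-on but fits the framework the present paper uses; your construction is more elementary and self-contained, and the two are of course $G$-homotopy equivalent by the uniqueness you prove.
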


\begin{defn}[{\cite[Sec. 5.1]{davis1998spaces}}]
	Let $G$ be a discrete group, $\mathcal F$ a family of subgroups and $\bm E$ an $\Or(G)$-spectrum. The \emph{$(\bm E,\mathcal F,G)$-assembly map} is the map
		\[H^G_*(\mathcal E_\mathcal F G,\bm E)\to H^G_*(\pt, \bm E)\]
	induced by the projection $\mathcal E_\mathcal F G\to \pt$. 
\end{defn}


The following lemma is a special case of \cite[Lem. 1.9]{davis1998spaces}.
\begin{lem}\label{davisinduction}
	Let $H\subseteq G$ be a subgroup. Consider the induction functor 
		\[I:\Or(H)\to \Or(G),\quad H/K\mapsto G\times_HH/K\cong G/K.\]
	Let $\bm E$ be an $\Or(H)$-spectrum and denote by $I_*\bm E$ the $\Or(G)$-spectrum given by
		\[I_*\bm E(G/K):= \Top^G(I(-),G/K)_+\wedge_{\Or(H)}\bm E\cong \Top^H(-,G/K)_+\wedge_{\Or(H)}\bm E.\]	
	 Let $X$ be a $G$-$CW$-complex and denote by $X|_H$ the same space with the action restricted to $H$. Then there is a  natural isomorphism
		\[H^H_*(X|_H,\bm E)\cong H^G_*(X,I_*\bm E).\]
\end{lem}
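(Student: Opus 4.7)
The plan is to construct, for each $n \geq 0$, a natural homeomorphism
\[
\Top^G(-, X)_+ \wedge_{\Or(G)} (I_* \bm E)_n \;\cong\; \Top^H(-, X|_H)_+ \wedge_{\Or(H)} \bm E_n,
\]
check that these homeomorphisms are compatible with the structure maps of the two spectra, and finally pass to homotopy groups. The core of the argument is a coend manipulation and splits into two ingredients.

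First, since $I(H/K) = G/K$ for every $K \subseteq H$, the natural chain of identifications
\[
\Top^G(I(H/K), X) = \Top^G(G/K, X) \cong X^K \cong \Top^H(H/K, X|_H)
\]
gives, after adding disjoint basepoints, an isomorphism of pointed $\Or(H)^{\op}$-spaces $\Top^G(I(-), X)_+ \cong \Top^H(-, X|_H)_+$, natural in the $G$-space $X$.

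Second, for any pointed $\Or(G)^{\op}$-space $A$ and any pointed $\Or(H)$-space $B$, there is a ``projection isomorphism''
\[
A \wedge_{\Or(G)} \bigl(\Top^G(I(-), -)_+ \wedge_{\Or(H)} B\bigr) \;\cong\; A(I(-)) \wedge_{\Or(H)} B.
\]
Both sides unravel to the double wedge
\[
\bigvee_{G/L,\,H/K} A(G/L) \wedge \Top^G(G/K, G/L)_+ \wedge B(H/K)
\]
modulo the equivalence relations from Definition \ref{balancedsmashproduct}. Applying the Yoneda-type identification from Remark \ref{orGextension} to the $\Or(G)$-coend in the variable $G/L$ collapses this to $\bigvee_{H/K} A(G/K) \wedge B(H/K)$ modulo the remaining $\Or(H)$-relations, which is exactly the right-hand side.

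Combining the two ingredients with $A := \Top^G(-, X)_+$ and $B := \bm E_n$ produces the desired homeomorphism at each level; the structure maps on both sides are induced from those of $\bm E$, so the level-wise homeomorphisms assemble into an isomorphism of spectra. Passing to $\pi_*$ yields the claimed natural isomorphism $H^H_*(X|_H, \bm E) \cong H^G_*(X, I_* \bm E)$. The only real technical point is the projection isomorphism, whose proof is a routine Fubini/Yoneda argument for coends; compatibility with structure maps and naturality in $X$ are then automatic from the construction.
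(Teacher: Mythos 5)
Your argument is correct: the Fubini/co-Yoneda manipulation of the double coend, together with the identification $\Top^G(G/K,X)\cong X^K\cong \Top^H(H/K,X|_H)$, is precisely the standard proof. The paper itself gives no proof but only cites \cite[Lem.~1.9]{davis1998spaces}, and your argument is essentially the one given there, so there is nothing to add.
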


\subsection*{Homotopy theory for $\Or(G)$-spectra}

The last part of this section deals with those homotopy theoretical statements which guarantee that the class of $G$-$C^*$-algebras $A$, for which the $(\K_A^G,\Fin,G)$-assembly map is an isomorphism, is localizing. We recall some basic homotopy theoretical terminology and refer to \cite{tom2008algebraic} for more details.

Let $f:X\to Y$ be a pointed map and let $x_0\in X,y_0\in Y$ be the basepoints. We denote by $C_f$ the cone of $f$ that is the pointed space obtained from $(X\times [0,1])\cup Y$ by gluing $X\times \{1\}$ to $Y$ along $f$ and by collapsing $X\times \{0\}\cup \{x_0\}\times [0,1]$ to a point. The homotopy fiber of $f$ is the pointed space 
	\[F_f:=\{(x,\gamma)\in X\times \Top([0,1],Y)\mid f(x)=\gamma(1),\gamma(0)=y_0\}\]
whose basepoint is given by $(x_0,y_0)$. Both the cone and the homotopy fiber define functors on a category with pointed maps as objects and commutative squares as morphisms. There are natural pointed homeomorphisms 
	\begin{equation}\label{coneisafunctor}
		C_{\Sigma f}\cong \Sigma C_f,\quad F_{\Omega f}\cong \Omega F_f.
	\end{equation}
We define the cone $C_f$ of a map $f:\bm E\to \bm F$ of spectra as the sequence of spaces $C_{f_n}$ with structure maps given by the adjoints of the maps 
	\[\Sigma C_{f_n}\cong C_{\Sigma f_n}\to C_{f_{n+1}}.\]
The homotopy fiber of $f$ is defined analogously, using the second homeomorphism of \eqref{coneisafunctor}. Let $X\xrightarrow{f} Y \xrightarrow{g}Z$ be a sequence of pointed maps together with a homotopy $h:[0,1]\times X\to Z$ of pointed maps such that $h_1$is equal to $gf$ and $h_0$ is the constant map. We call $X\xrightarrow{f} Y \xrightarrow{g}Z$ a \emph{cofiber sequence}, if the canonical map
	\[C_f\to Z,\quad \begin{cases}(x,t)\mapsto h_t(x)\\y\mapsto g(y)\end{cases}\]
is a weak equivalence. Dually, we call $X\xrightarrow{f} Y \xrightarrow{g}Z$ a fiber sequence, if the canonical map
	\[X\to F_g,\quad x\mapsto (f(x),t\mapsto h_t(x))\]
is a weak equivalence. Note that the homotopy is part of the datum of a (co-)fiber sequence. However, we drop the homotopy from our notation whenever it is clear from context. By replacing (homotopies of) pointed maps by (homotopies of) maps of ($\Or(G)$-)spectra and by replacing weak equivalences by stable equivalences, we obtain analog notions of (co-)fiber sequences of ($\Or(G)$-)spectra.

\begin{lem}[{\cite[Lemma 2.6]{luck2003commuting}}]\label{fibercofiber}
	A sequence $\bm E\to \bm F\to \bm G$ of maps of spectra is a fiber sequence if and only if it is a cofiber sequence. In this case there is a natural long exact sequence 
		\[\dotsb\to\pi_{n+1}(\bm G)\to \pi_n (\bm E)\to \pi_n(\bm F)\to \pi_n(\bm G)\to \pi_{n-1}(\bm F)\to \dotsb\]
	of homotopy groups. 
\end{lem}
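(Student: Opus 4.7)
The plan is to reduce the statement to a single key technical fact: for every map $f : \bm E \to \bm F$ of spectra, the natural comparison map $F_f \to \Omega C_f$, defined levelwise from the standard space-level comparison of fiber and cofiber, is a stable equivalence. This ``stable equivalence of fiber and cofiber'' is precisely what distinguishes the stable from the unstable situation. Granting it, both the equivalence of fiber and cofiber sequences and the long exact sequence in homotopy groups follow by routine diagram chases.

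To prove the key fact I would compare, at each level $n$, the long exact sequences associated to $f_n : \bm E_n \to \bm F_n$. The fiber sequence $F_{f_n} \to \bm E_n \to \bm F_n$ gives a long exact sequence on $\pi_k$ in every degree, while the cofiber sequence $\bm E_n \to \bm F_n \to C_{f_n}$ gives a long exact sequence on $\pi_k$ only in a range controlled by the connectivity of $\bm E_n$ and $\bm F_n$ via the Blakers--Massey (equivalently Freudenthal) theorem. The natural map $F_{f_n} \to \Omega C_{f_n}$ induces a morphism between these long exact sequences, hence a $\pi_k$-isomorphism in that range. After replacing $\bm E$ and $\bm F$ by stably equivalent $CW$-spectra whose levels have sufficiently increasing connectivity, the Blakers--Massey range grows with $n$, so passing to the colimit $\pi_k(\bm E) = \colim_n \pi_{k+n}(\bm E_n)$ gives an isomorphism on every stable homotopy group, i.e.\ the desired stable equivalence $F_f \simeq \Omega C_f$.

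Given the key fact, the main claim is then essentially formal. A sequence $\bm E \xrightarrow{f} \bm F \xrightarrow{g} \bm G$ equipped with a null-homotopy of $gf$ produces canonical comparison maps $\tilde f : \bm E \to F_g$ and $\tilde g : C_f \to \bm G$. Applying the key fact to $g$ yields a stable equivalence $F_g \simeq \Omega C_g$, so the fiber sequence $F_g \to \bm F \to \bm G$ is identified, up to a shift and a stable equivalence, with the cofiber sequence $\bm F \to \bm G \to \Sigma F_g$. A five-lemma argument on homotopy groups, applied to the diagram obtained by comparing this with $\bm E \to \bm F \to C_f$, then shows that $\tilde f$ is a stable equivalence if and only if $\tilde g$ is. The long exact sequence is extracted from the iterated Puppe extension $\cdots \to \Omega \bm G \to \bm E \to \bm F \to \bm G \to \Sigma \bm E \to \cdots$, in which every three consecutive terms form a (co)fiber sequence: applying $\pi_0$ at each stage and invoking the natural isomorphism $\pi_0(\Sigma^j \bm E) \cong \pi_{-j}(\bm E)$ (which also falls out of the key fact) splices the three-term exact sequences together into the claimed long exact sequence.

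The main obstacle is the step where the Blakers--Massey range is shown to grow to infinity in the stable colimit. This requires either a $CW$- or an $\Omega$-spectrum replacement to ensure the connectivities of the levels are controllable, and it is the one point at which the argument genuinely uses the stable (as opposed to unstable) setting. Once it is established, everything else is an exercise in the formalism of triangulated/stable homotopy theory.
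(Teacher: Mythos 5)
First, a point of reference: the paper does not prove this lemma at all --- it is imported verbatim from \cite[Lem.~2.6]{luck2003commuting} --- so there is no in-paper argument to compare against. Your overall strategy (reduce everything to the single claim that the natural map $F_f\to\Omega C_f$ is a stable equivalence, prove that claim by comparing the levelwise fiber and cofiber long exact sequences via Blakers--Massey, then deduce the fiber/cofiber equivalence by the five lemma and extract the long exact sequence from the Puppe sequence) is the standard one and is essentially how the cited reference and the textbook treatments proceed. The formal second half of your argument is fine.

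The gap is exactly where you locate it, but your proposed fix does not work as stated. Replacing $\bm E$ and $\bm F$ by ``stably equivalent $CW$-spectra whose levels have sufficiently increasing connectivity'' is not available in general: an $\Omega$-spectrum replacement gives no control on the connectivity of the levels (the levels of a periodic $\Omega$-spectrum carry homotopy in every non-negative degree), a classical $CW$-replacement has cells in arbitrarily low stable dimensions whenever the spectrum is not bounded below, and any spectrum whose $n$-th level is $(n-c)$-connected for a fixed $c$ has vanishing homotopy in all sufficiently negative degrees. Since the spectra this lemma is applied to are the Bott-periodic spectra $\K(A)$, which are not bounded below, the replacement you invoke simply does not exist in the form you need it. (One can arrange connectivities growing sublinearly, e.g.\ like $2n/3$, which is compatible with non-connectivity and still makes the Blakers--Massey range overtake the degree $k+n$, but that requires an argument --- assembling Whitehead covers along the structure maps and checking the comparison is a stable equivalence --- which you do not give.) The standard repair avoids any global replacement: for fixed $k$ and fixed level $n$, interpolate extra suspensions into the colimit $\pi_k(\bm E)=\colim_n\pi_{k+n}(\bm E_n)$, i.e.\ apply Freudenthal/Blakers--Massey to $\Sigma^j f_n\colon\Sigma^j\bm E_n\to\Sigma^j\bm F_n$, whose source and target are $(j-1)$-connected, choose $j$ large relative to $k+n$, and then push forward to level $n+j$ along the iterated structure maps $\Sigma^j\bm E_n\to\bm E_{n+j}$ and $\Sigma^j C_{f_n}\cong C_{\Sigma^j f_n}\to C_{f_{n+j}}$. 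With that substitution your argument closes.
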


The following well-known lemma is an easy consequence of Lemma \ref{fibercofiber}. 
\begin{lem}[{cp. \cite[Prop. 6.12(i)]{schwede2012symmetric}}]\label{homotopydirectsums}
	Let $\bm E_i,i\in I$ be a collection of spectra. Then the natural map 
		\[\pi_*\left(\bigvee_{i\in I}\bm E_i\right)\to \bigoplus_{i\in I}\pi_*(\bm E_i)\]
	is an isomorphism. 
\end{lem}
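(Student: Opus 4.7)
The plan is to first reduce to the case of finite wedges, and then handle the general case by expressing an arbitrary wedge as a filtered colimit of finite subwedges. Throughout, the key input is Lemma \ref{fibercofiber}, which allows us to convert cofiber-type information (easy to produce for wedges) into exact sequences of homotopy groups.

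For the finite case it suffices by induction to treat a wedge $\bm E \vee \bm F$ of two spectra. I would observe that the inclusion $\iota: \bm E \to \bm E \vee \bm F$ and the projection $p: \bm E \vee \bm F \to \bm F$ (collapsing the $\bm E$-summand to the basepoint) fit into a cofiber sequence
\[
\bm E \xrightarrow{\iota} \bm E \vee \bm F \xrightarrow{p} \bm F,
\]
since the cone of $\iota$ is spacewise stably equivalent to $\bm F$. By Lemma \ref{fibercofiber} this is also a fiber sequence, so it yields a natural long exact sequence on homotopy groups. The wedge inclusion $\bm F \to \bm E \vee \bm F$ provides a section of $p$, so the long exact sequence splits into short split exact sequences, giving the isomorphism
\[
\pi_n(\bm E \vee \bm F) \cong \pi_n(\bm E) \oplus \pi_n(\bm F).
\]
Naturality of this splitting in $\bm E$ and $\bm F$ allows the induction to go through, so the lemma holds for any finite index set.

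For arbitrary $I$, I would write
\[
\bigvee_{i \in I} \bm E_i \;=\; \colim_{F \subseteq I \text{ finite}} \bigvee_{i \in F} \bm E_i,
\]
where the colimit is filtered and the structure maps are the levelwise inclusions of wedge summands, which are in particular closed inclusions of pointed spaces. The main technical point is that homotopy groups of spectra commute with such filtered colimits of levelwise closed inclusions: at each spectrum level $k$, $\pi_{n+k}$ of a filtered colimit of closed inclusions of pointed spaces is the corresponding colimit of homotopy groups, and this colimit commutes with the sequential colimit defining $\pi_n$ of a spectrum. Combined with the finite case, this gives
\[
\pi_n\!\Bigl(\bigvee_{i \in I} \bm E_i\Bigr) \;\cong\; \colim_F \pi_n\!\Bigl(\bigvee_{i \in F} \bm E_i\Bigr) \;\cong\; \colim_F \bigoplus_{i \in F}\pi_n(\bm E_i) \;\cong\; \bigoplus_{i \in I}\pi_n(\bm E_i),
\]
and one checks that the resulting isomorphism is induced by the inclusions $\bm E_i \hookrightarrow \bigvee_j \bm E_j$, hence coincides with the natural map in question.

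The main obstacle, which is really just a bookkeeping step, is justifying the interchange of the filtered colimit over finite subsets of $I$ with the sequential colimit defining $\pi_*$ of a spectrum; this is where one needs that wedge inclusions are closed inclusions so that the spacewise compactness argument for $\pi_{n+k}$ applies uniformly in $k$. Everything else is formal consequences of Lemma \ref{fibercofiber}.
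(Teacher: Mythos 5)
Your argument is correct and is essentially the proof the paper has in mind: the paper gives no details, merely calling the lemma an easy consequence of Lemma \ref{fibercofiber} and citing Schwede, and your two steps --- the finite case via the split cofiber sequence $\bm E\to\bm E\vee\bm F\to\bm F$ fed into Lemma \ref{fibercofiber}, followed by the passage to arbitrary index sets using that compact subsets (hence spheres and homotopies) land in finite subwedges --- constitute exactly the standard argument being referenced. The only point worth making explicit is why a compact subset of an infinite wedge of weak Hausdorff spaces meets only finitely many summands away from the basepoint (an infinite choice of such points would form an infinite closed discrete subset of a compact set), but this is routine.
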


\begin{lem}[{\cite[Lem. 4.6]{davis1998spaces}}]\label{homotopyinvariance}
	Let $\bm E\to \bm F$ be a stable equivalence of $\Or(G)$-spectra and $X$ a $G$-$CW$-complex. Then the induced map 
		\[H^G_*(X,\bm E)\to H^G_*(X,\bm F)\]
	is an isomorphism.
\end{lem}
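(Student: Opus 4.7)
The plan is to reduce to the claim on orbit spaces $X = G/H$ via the fact that $H^G_*(-, \bm E)$ is a generalized $G$-equivariant homology theory. A stable equivalence of $\Or(G)$-spectra is, by the natural objectwise extension of the definition, a map $\bm E \to \bm F$ such that $\bm E(G/H) \to \bm F(G/H)$ is a stable equivalence of spectra for every subgroup $H \subseteq G$. By Remark \ref{orGextension}, this already gives the orbit case: the induced map $H^G_*(G/H, \bm E) \cong \pi_* \bm E(G/H) \to \pi_* \bm F(G/H) \cong H^G_*(G/H, \bm F)$ is an isomorphism.

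To extend from orbits to arbitrary $G$-$CW$-complexes, I would argue by cellular induction. For a disjoint union of orbits, the natural wedge decomposition of $\Top^G(-, X)_+ \wedge_{\Or(G)} \bm E$ combined with Lemma \ref{homotopydirectsums} reduces the claim to the orbit case. To pass from the $(n{-}1)$-skeleton to the $n$-skeleton, the defining $G$-pushout yields a cofiber sequence of spectra whose cofiber is a wedge of suspensions of the form $\bigvee_i \Sigma^n \bigl(\Top^G(-, G/H_i)_+ \wedge_{\Or(G)} \bm E\bigr)$. By Lemma \ref{fibercofiber} this produces natural long exact sequences of homotopy groups, and the five lemma completes the induction step. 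A final passage to the colimit along the skeleta, using that $\pi_*$ commutes with sequential colimits of such filtrations, completes the argument.

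The main technical point is verifying that cellular $G$-pushouts actually induce cofiber sequences of the associated spectra $\Top^G(-, X)_+ \wedge_{\Or(G)} \bm E$; this is essentially the content of the homology-theory axioms for $H^G_*(-, \bm E)$ recorded in the preceding subsection. Once these axioms are granted, the entire argument is a formal comparison of two generalized homology theories via a natural transformation that is an isomorphism on orbits.
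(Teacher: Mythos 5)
Your argument is correct and is essentially the standard cellular-induction proof underlying the cited result: the paper itself offers no proof of this lemma, simply quoting \cite[Lem.~4.6]{davis1998spaces}, whose proof proceeds exactly as you describe (isomorphism on orbits via Remark \ref{orGextension}, wedge decomposition and Lemma \ref{homotopydirectsums} for unions of orbits, the five lemma applied to the long exact sequences of the skeletal cofiber sequences, and a colimit over skeleta). No gaps; the one technical point you flag, that cellular pushouts yield cofiber sequences after applying $\Top^G(-,X)_+\wedge_{\Or(G)}-$, is indeed part of the homology-theory axioms already recorded in the paper.
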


The following lemma is inspired by \cite[Def. 3.13]{davis1998spaces}.

\begin{lem}\label{cofibertocofiber}
	Let $X$ be a $G$-$CW$-complex. Then the functor 
		\[\Top^G(-,X)_+\wedge_{\Or(G)}-\]
	maps cofiber sequences of $\Or(G)$-spectra to cofiber sequences of spectra.
	\begin{proof}
		The preceding lemma shows that the functor $\Top^G(-,X)_+\wedge_{\Or(G)}-$ commutes with stable equivalences. It therefore suffices to show that it also commutes with taking cones. To see that this is indeed the case, we reformulate the definition of the cone. Consider the category $\mathcal C$ represented by the following diagram: 
		
		\begin{equation}\label{pushoutcategory}
				\begin{tikzcd}
				c_0\arrow[d]\arrow[r]&c_1\\
				c_2&\\
				\end{tikzcd}
		\end{equation}
		
		There is a natural $\mathcal C^{\op}$-space $E\mathcal C$ given by
			\[E\mathcal C(c_0)=[0,1],\quad E\mathcal C(c_2)=\{0\},\quad E\mathcal C(c_1)=\{1\}\]
		on objects and by the obvious inclusions on morphisms. A morphism
			$f:\bm E\to \bm F$
		of spectra gives rise to a $\mathcal C$-spectrum $\bm D_f$ by mapping diagram \eqref{pushoutcategory} to the following diagram:
		
		\[\begin{tikzcd}
				\bm E\arrow[d]\arrow[r,"f"]&\bm F\\
				\pt&\\
			\end{tikzcd}.\]
			
		Similarly, a morphism $f:\bm E\to \bm F$ of $\Or(G)$-spectra gives rise to a $\mathcal C\times \Or(G)$-spectrum $\bm D_f$. Now the cone of $f$ can be rewritten as 
			\[C_f=E\mathcal C_+\wedge_{\mathcal C}\bm D_f.\]
		Using associativity of balanced smash products and observing that the construction $f\mapsto \bm D_f$ commutes with our functor $\Top^G(-,X)_+\wedge_{\Or(G)}-$, we obtain the desired formula
			\begin{align*}
				  &C_{\Top^G(-,X)_+\wedge_{\Or(G)}f} \\
				= &E\mathcal C_+\wedge_\mathcal C \bm D_{\Top^G(-,X)_+\wedge_{\Or(G)}f} \\
				= &E\mathcal C_+\wedge_\mathcal C (\Top^G(-,X)_+\wedge_{\Or(G)}\bm D_f) \\
				= &\Top^G(-,X)_+\wedge_{\Or(G)}(E\mathcal C_+\wedge_\mathcal C \bm D_f) \\
				= &\Top^G(-,X)_+\wedge_{\Or(G)}C_f
			\end{align*}
		
	\end{proof}
\end{lem}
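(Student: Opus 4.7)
The plan is to reduce the statement to two separate claims: that the functor in question preserves stable equivalences, and that it commutes (up to natural homeomorphism) with the cone construction. Preservation of stable equivalences follows from Lemma \ref{homotopyinvariance}: on homotopy groups the functor $\Top^G(-,X)_+ \wedge_{\Or(G)}-$ computes $H^G_*(X,-)$, which by that lemma inverts stable equivalences. Since a cofiber sequence is characterized by a stable equivalence from the cone of the first map to the third spectrum, once we know the functor commutes with cones we are done.

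To show commutation with cones, the strategy is to realize the cone as a balanced smash product over a small indexing category, and then invoke associativity of balanced smash products. Let $\mathcal{C}$ denote the pushout-shape category with three objects $c_0, c_1, c_2$ and non-identity morphisms $c_0 \to c_1$ and $c_0 \to c_2$. Define a pointed $\mathcal{C}^{\op}$-space $E\mathcal{C}$ by $E\mathcal{C}(c_0) := [0,1]_+$, $E\mathcal{C}(c_1) := \{1\}_+$, $E\mathcal{C}(c_2) := \{0\}_+$, with the obvious inclusions as structure maps. To a morphism $f\colon \bm E \to \bm F$ of spectra, associate the $\mathcal{C}$-spectrum $\bm D_f$ with $\bm D_f(c_0) = \bm E$, $\bm D_f(c_1) = \bm F$, $\bm D_f(c_2) = \pt$, and with structure morphisms $f$ and the zero map. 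A direct inspection of the coend defining $E\mathcal{C}\wedge_{\mathcal{C}}\bm D_f$ recovers the cone $C_f$ levelwise, so $C_f \cong E\mathcal{C}\wedge_{\mathcal{C}}\bm D_f$ as spectra.

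The same construction applied to a morphism $f$ of $\Or(G)$-spectra produces a $(\mathcal{C} \times \Or(G))$-spectrum $\bm D_f$. Since $\Top^G(-,X)_+ \wedge_{\Or(G)} -$ fixes the constant object $\pt$ and is natural in its $\Or(G)$-spectrum input, one checks $\bm D_{\Top^G(-,X)_+ \wedge_{\Or(G)} f} \cong \Top^G(-,X)_+ \wedge_{\Or(G)} \bm D_f$. Invoking associativity of balanced smash products over $\mathcal{C}$ and $\Or(G)$ then yields a natural chain of homeomorphisms identifying $C_{\Top^G(-,X)_+ \wedge_{\Or(G)}f}$ with $\Top^G(-,X)_+ \wedge_{\Or(G)} C_f$, as desired.

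The main obstacle I expect is ensuring that associativity of balanced smash products is implemented cleanly at the level of spectra rather than spaces. One must verify that forming cones levelwise and assembling the resulting structure maps from the adjoints $\Sigma C_{f_n} \cong C_{\Sigma f_n} \to C_{f_{n+1}}$ agrees with first applying the functor to $\bm D_f$ and then taking the coend, and conversely. A second subtle point is the choice of $E\mathcal{C}$ with the interval $[0,1]_+$ rather than a discrete two-point space: the former is what distinguishes the cone from the bare pushout, which is crucial for $C_f$ to have the correct homotopy type.
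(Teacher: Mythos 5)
Your proposal is correct and follows essentially the same route as the paper's proof: reduce to commutation with cones via the preceding stable-equivalence lemma, realize the cone as the balanced smash product $E\mathcal C_+\wedge_{\mathcal C}\bm D_f$ over the pushout-shape category, and conclude by associativity of balanced smash products. The only cosmetic difference is that you attach disjoint basepoints to the values of $E\mathcal C$ directly, where the paper writes $E\mathcal C_+$ in the smash product formula instead.
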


\section{The $\Or(G)$-spectrum $\textit{\textbf{K}}_A^G$}\label{kspectrum}

In this section, we associate an $\Or(G)$-spectrum $\K_A^G$ to every $G$-$C^*$-algebra $A$, closely following \cite{mitchener2002c} and \cite{joachim2003k}. We call the resulting assembly map 
	\[H^G_*(\mathcal E_{\Fin}G,\K_A^G)\to H^G_*(\pt,\K_A^G)\]
the \emph{Davis--L\"uck assembly map}, where $\Fin$ denotes the family of finite subgroups of $G$. Let us motivate the construction of $\K_A^G$. In order for the right hand sides of the Baum--Connes and Davis--L\"uck assembly maps to match, we need an isomorphism
	\[H^G_*(\pt,\K_A^G)= \pi_*(\K_A^G(G/G))\overset{!}{\cong} K_*(A\rtimes_r G).\]
In order for the left hand sides to match, we expect an isomorphism
	\[H^G_*(X,\K_A^G)\overset{!}\cong KK_*^G(C_0(X),A)\]
for all cocompact proper $G$-spaces $X$. For a finite subgroup $H\subseteq G$ and $X=G/H$, this boils down to the isomorphism
	\[H^G_*(G/H,\K_A^G)\overset{!}\cong KK_*^G(C_0(G/H),A)\cong KK_*^H(\C,A)\cong K_*(A\rtimes_r H).\]
	Now we are tempted to define $\K_A^G(G/H):=\K(A\rtimes_r H)$, where $\K:\mathfrak C^*\to \Sp$ is a functor representing $K$-theory for $C^*$-algebras. Unfortunately, the assignment $G/H\mapsto A\rtimes_r H$ does not define a \emph{functor} on the orbit category. To solve this, we replace $A\rtimes_r H$ by a Morita-equivalent $C^*$-\emph{category} $A\rtimes_r \overline{G/H}$. We then define $\K_A^G (G/H)$ to be the $K$-theory spectrum in the sense of \cite{joachim2003k} of that $C^*$-category. The construction of the $C^*$-category $A\rtimes_r \overline{G/H}$ given here is a minor modification of the construction in \cite{mitchener2002c}. 

\subsection*{Groupoid actions and crossed products}

\begin{defn} \label{ccategory}
		A \emph{unital $C^*$-category} is a small category $\mathcal A$, whose morphism sets $\mathcal A(x,y)$ are complex Banach spaces equipped with conjugate linear \emph{involution maps}
			$*:\mathcal A(x,y)\to \mathcal A(y,x)$
		satisfying the axioms 
			\begin{enumerate}				
				\item $(a^*)^*=a$
				\item $\|a b\|\leq \|a\|\|b\|$
				\item $\|a^* a\|=\|a\|^2$\label{C*identity}
				\item $(a b)^*=b^*a^*$
				\item $a^* a\geq 0$ \label{positive-elements}
			\end{enumerate}
		for all morphisms $a\in \mathcal A(y,z), b\in \mathcal A(x,y)$. A \emph{unital $C^*$-functor} is a functor between $C^*$-categories which is linear on morphism sets and preserves the involution. A (nonunital) \emph{$C^*$-category} is defined in the same way as a unital $C^*$-category except that the morphism sets are not required to contain identity morphisms. A (nonunital) \emph{$C^*$-functor} is defined in the same way as a $C^*$-functor except that it does not need to preserve identity morphisms. By dropping the norm from the definition, we obtain analog notions of $*$-categories and $*$-functors. 
	\end{defn}
	
	\begin{defn}
		A \emph{groupoid} $\mathcal G$ is a small category with all morphisms invertible. We do not equip groupoids with any topology. A \emph{groupoid morphism} $F:\mathcal G\to \mathcal H$ is a functor between the underlying categories. A \emph{$\mathcal G$-$C^*$-algebra} $A$ is a functor $x\mapsto A_x$ from $\mathcal G$ to the category of $C^*$-algebras. A \emph{$\mathcal G$-equivariant morphism} $A\to B$ is a natural transformation of the underlying functors. 
	\end{defn}
	
	Sticking to the notation for $G$-$C^*$-algebras, we denote the action of an element $g\in \mathcal G(x,y)$ by $\alpha_g:A_x\to A_y$ and say that \emph{the $\mathcal G$-action is denoted by $\alpha$.}	
	
	\begin{rem}
		Our definition of $\mathcal G$-$C^*$-algebras is adapted from \cite{mitchener2002c} and formally differs from the classical definition (e.g. the one in \cite{legall1999theorie}). Usually, a $\mathcal G$-$C^*$-algebra $A$ is defined as a single $C^*$-algebra $A$ together with a non-degenerate $*$-homomorphism $\varphi:C_0(\Ob(\mathcal G))\to ZM(A)$ and an additional datum implementing the action. Our definition can be obtained from the classical one by taking the fibers 
			\[A_x:=A/\varphi(C_0(\Ob(\mathcal G)\setminus \{x\})A).\] 
	\end{rem}
	
	\begin{eg}\label{transformationgroupoids}
		Let $G$ be a discrete group acting on a set $X$. The \emph{transformation groupoid} $\overline X$ has the points of $X$ as objects and morphisms given by 
			\[\overline X(x,y):=\{g\in G\mid gx=y\}.\]
		Every $G$-equivariant map $X\to Y$ gives rise to a faithful (i.e. injective on morphism sets) groupoid morphism $\overline X\to \overline Y$. In particular, there is a natural morphism $\overline X\to G=\overline {\pt}$. By precomposition with this morphism, every $G$-$C^*$-algebra (considered as a functor from $G$ to the category of $C^*$-algebras) can be considered as an $\overline X$-$C^*$-algebra as well.
	\end{eg}
	
	\begin{defn}\label{convolutioncategory}
		Let $\mathcal G$ be a groupoid and $A$ a $\mathcal G$-$C^*$-algebra with $\mathcal G$-action denoted by $\alpha$. The \emph{convolution category} $A\mathcal G$ is the category with the same objects as $\mathcal G$ and morphism sets given by formal sums
			\[{A\mathcal G}(x,y):=\left\{\sum_{i=1}^n a_iu_{g_i}\biggm| n\in \N,g_i\in {\mathcal G}(x,y),a_i\in A_y\right\}.\]
		We define composition and involution on $A\mathcal G$ by linear extension of the formulas
			\[au_g\cdot bu_h:=a\alpha_g(b)u_{gh},\quad (au_g)^*:=\alpha_{g^{-1}}(a)^*u_{g^{-1}}\]
		for $a\in A_z,b\in A_y,h\in \mathcal G(x,y)$ and $g\in \mathcal G(y,z)$. In this way, $A\mathcal G$ becomes a $*$-category.
		
	\end{defn}
	\begin{defn}\label{groupoid-regular-representation}

		Let $A$ be a $\mathcal G$-$C^*$-algebra with $\mathcal G$-action $\alpha$. Let $x,y\in \Ob(\mathcal G)$ and choose $z\in \Ob(\mathcal G)$ such that $\mathcal G(z,x)$ is nonempty. To each $f\in A\mathcal G(x,y)$, we associate an adjointable operator 
			\[\Lambda_{A,\mathcal G,z}(f):\ell^2(\mathcal G(z,x),A_z)\to \ell^2(\mathcal G(z,y),A_z)\]
		of Hilbert-$A_z$-modules, defined by linear extension of the formula 
			\[\Lambda_{A,\mathcal G,z}(au_g)\xi(h):=\alpha_{h^{-1}}(a)\xi(g^{-1}h)\]
			for $a\in A_y,\xi\in \ell^2(\mathcal G(z,x),A_z), g\in \mathcal G(x,y)$ and $h\in \mathcal G(z,y)$.
		The \emph{reduced norm of $f$} is given by
			\begin{equation}\label{reducednorm}
				\|f\|_r:=\|\Lambda_{A,\mathcal G,z}(f)\|.
			\end{equation}
		The \emph{reduced crossed product $A\rtimes_r \mathcal G$} is the $C^*$-category obtained from $A\mathcal G$ by completing all the morphism sets with respect to the reduced norm. 
	\end{defn}
	
	\begin{rem}
		The norm in \eqref{reducednorm} does not depend on the choice of $z$. Indeed, if $z'\in \Ob(\mathcal G)$ is another object such that $\mathcal G(x,z')$ is nonempty, we may pick a morphism $g\in \mathcal G(z,z')$. A calculation then shows that for every $f\in A\mathcal G(x,y)$, the diagram
			
			\begin{equation}\label{independentofz}
			\begin{tikzcd}
				\ell^2(\mathcal G(z,x),A_z)\arrow[d,"\Lambda_{A,\mathcal G,z}(f)"]\arrow[r,"\rho_g\otimes \alpha_g","\cong"']	&\ell^2(\mathcal G(z',x),A_{z'})\arrow[d,"\Lambda_{A,\mathcal G,z'}(f)"]\\
				\ell^2(\mathcal G(z,y),A_z)\arrow[r,"\rho_g\otimes \alpha_g","\cong"']	&\ell^2(\mathcal G(z',y),A_{z'})
			\end{tikzcd}
			\end{equation}
		commutes where $\rho_g\otimes \alpha_g$ is defined by the formula 
			\[(\rho_g\otimes \alpha_g)\xi(h)=\alpha_g(\xi(hg)).\]
		Thus, we have $\|\Lambda_{A,\mathcal G,z}(-)\|=\|\Lambda_{A,\mathcal G,z'}(-)\|$.
		
		It is sometimes convenient to have a fixed representation of the reduced crossed product. We call the representation 
			\begin{equation}\label{regularreresentationeq}
				\Lambda_{A,\mathcal G}:=\prod_{z\in \Ob(\mathcal G)}\Lambda_{A,\mathcal G,z}:A\mathcal G\to \prod_{z\in \Ob(\mathcal G)}\mathcal L_{A_z}\left(\bigoplus_{x\in \Ob(\mathcal G)}\ell^2(\mathcal G(z,x),A_z)\right)
			\end{equation}
		the \emph{regular representation of $A\mathcal G$.}
	\end{rem}
	
	\begin{lem}\label{functorial-reduced-crossed-product}
	The following statements hold.
		\begin{enumerate}		
		\item
			 Let $A,B$ be $\mathcal G$-$C^*$-algebras and $\varphi:A\to B$ a $\mathcal G$-equivariant morphism. Then the canonical $*$-functor 
				$\varphi\mathcal G:A\mathcal G\to B\mathcal G$,			
			defined as the identity on objects and as $au_g\mapsto \varphi(a)u_g$ on morphisms, extends to a $C^*$-functor 
				\[\varphi\rtimes_r\mathcal G:A\rtimes_r\mathcal G\to B\rtimes_r \mathcal G.\]			
		\item 
			Let $A$ be a $\mathcal G$-$C^*$-algebra and $\varphi:\mathcal H\to \mathcal G$ a faithful groupoid morphism. Denote the $\mathcal H$-$C^*$-algebra obtained by precomposition with $\varphi$ also by $A$. Then the natural $*$-functor 
				$\id_A\varphi:A\mathcal H\to A\mathcal G$,
			defined by $x\mapsto \varphi(x)$ on objects and $au_g\mapsto au_{\varphi(g)}$ on morphisms extends to an isometric $C^*$-functor 
				\[\id_A\rtimes_r \varphi:A\rtimes_r \mathcal H\to A\rtimes_r \mathcal G.\]
			\end{enumerate}
			\begin{proof}
			
				For the first statement, fix $x,y\in \Ob(\mathcal G)$ and fix $z\in \Ob(\mathcal G)$ such that $\mathcal G(z,x)$ is nonempty. Consider the following commutative diagram.
					\[
					\begin{tikzcd}
					A\mathcal G(x,y)\arrow[d,"\varphi\mathcal G"] \arrow[r,"\Lambda_{A,\mathcal G,z}"]	
					&\mathcal L_{A_z}\left(\bigoplus_{w\in \Ob(\mathcal G)} \ell^2(\mathcal G(z,w),A_z)\right)\arrow[r,"\cong"]	
					&M(A_z\otimes \mathcal K(\bigoplus_{w\in \Ob(\mathcal G)} \ell^2\mathcal G(z,w)))\arrow[d,"\varphi_z\otimes \id",dashed]\\	
					B\mathcal G(x,y) \arrow[r,"\Lambda_{B,\mathcal G,z}"]	
					&\mathcal L_{B_z}\left(\bigoplus_{w\in \Ob(\mathcal G)} \ell^2(\mathcal G(z,w),B_z)\right)\arrow[r,"\cong"]	
					&M(B_z\otimes \mathcal K(\bigoplus_{w\in \Ob(\mathcal G)} \ell^2\mathcal G(z,w)))
					\end{tikzcd}.
					\]

				The horizontal isomorphisms are the standard identifications (cp. \cite[Thm. 2.4 and p. 37]{lance1995hilbert}). 
				In general, $\varphi_z\otimes \id$ does not extend to the whole multiplier algebra. However, it extends to a $C^*$-subalgebra which contains the image of $\Lambda_{A,\mathcal G,z}$ by \cite[Def. A.3, Prop. A.6 (i)]{echterhoff2002categorical}. In any case, the extension of $\varphi_z\otimes \id$ is norm-decreasing. Since the horizontal arrows in the above diagram are isometric by definition, $\varphi \mathcal G$ must be norm-decreasing as well. 
				
				For the second statement of the lemma, fix $x,y\in \Ob(\mathcal H)$ and pick $z\in \Ob(\mathcal H)$ such that $\mathcal H(z,x)$ is nonempty. We have to prove the following equation.
					\begin{equation}\label{repsofHandG}
						\|\Lambda_{A,\mathcal H,z}(f)\|=\|\Lambda_{A,\mathcal G,\varphi(z)}\circ(\id_A\varphi)(f)\|,\quad \text{for all} f\in A\mathcal H(x,y).
					\end{equation}
				Let $S\subseteq \mathcal G(\varphi(z),\varphi(z))$ be a system of coset representatives for $\mathcal H(z,z)\backslash \mathcal G(\varphi(z),\varphi(z))$ (this expression makes sense since $\varphi$ is faithful). We get a direct sum decomposition
					\[\ell^2(\mathcal G(\varphi(z),\varphi(x)),A_{\varphi(z)})=\bigoplus_{g\in S}\ell^2(\mathcal H(z,x)g,A_{\varphi(z)})\]
				and a similar decomposition for $y$ instead of $x$. As in \eqref{independentofz}, we have a commutative diagram
					\[\begin{tikzcd}
						\ell^2(\mathcal H(z,x)g,A_{\varphi(z)})\arrow[d,"\Lambda_{A,\mathcal H,z}(f)"]\arrow[r,"\rho_g\otimes \alpha_g","\cong"']	&\ell^2(\mathcal H(z,x),A_{\varphi(z)})\arrow[d,"\Lambda_{A,\mathcal H,z}(f)"]\\
						\ell^2(\mathcal H(z,y)g,A_{\varphi(z)})\arrow[r,"\rho_g\otimes \alpha_g","\cong"']	&\ell^2(\mathcal H(z,y),A_{\varphi(z)})
					\end{tikzcd}\]
				for every $f\in A\mathcal H(x,y)$ and $g\in S$. Thus, the representation $\Lambda_{A,\mathcal G,\varphi(z)}\circ (\id_A\varphi)$ of $A\mathcal H(x,y)$ is equivalent to a direct sum of $|S|$-many copies of the representation $\Lambda_{A,\mathcal H,z}$. This proves \eqref{repsofHandG}.
							
			\end{proof}
	\end{lem}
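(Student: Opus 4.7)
For both parts of the lemma, the plan is to realize the extensions via the regular representations of Definition \ref{groupoid-regular-representation}, reducing everything to standard facts about $*$-homomorphisms acting on Hilbert $C^*$-modules.

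For part (i), I would fix objects $x, y \in \Ob(\mathcal G)$ and an auxiliary $z$ with $\mathcal G(z,x)$ nonempty. The pointwise application of $\varphi_z: A_z \to B_z$ induces a map $\ell^2(\mathcal G(z,w), A_z) \to \ell^2(\mathcal G(z,w), B_z)$ for every $w \in \Ob(\mathcal G)$, and a direct check from the formulas defining $\Lambda_{-,\mathcal G,z}$ shows that the diagram
\[\begin{tikzcd}
A\mathcal G(x,y) \arrow[r, "\Lambda_{A,\mathcal G,z}"] \arrow[d, "\varphi\mathcal G"'] & \mathcal L_{A_z}\bigl(\bigoplus_{w} \ell^2(\mathcal G(z,w), A_z)\bigr) \arrow[d] \\
B\mathcal G(x,y) \arrow[r, "\Lambda_{B,\mathcal G,z}"'] & \mathcal L_{B_z}\bigl(\bigoplus_{w} \ell^2(\mathcal G(z,w), B_z)\bigr)
\end{tikzcd}\]
commutes, where the right-hand vertical arrow is the natural map induced by $\varphi_z$ via the identification $\mathcal L_{A_z}(\ell^2(X, A_z)) \cong M(A_z \otimes \mathcal K(\ell^2 X))$. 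Since $\varphi_z \otimes \id$ is norm-decreasing on its domain of definition, $\varphi\mathcal G$ is reduced-norm-decreasing and therefore extends to the completions.

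For part (ii), the plan is to fix $z \in \Ob(\mathcal H)$ with $\mathcal H(z,x)$ nonempty and compare $\Lambda_{A,\mathcal H,z}$ with $\Lambda_{A,\mathcal G,\varphi(z)} \circ (\id_A \varphi)$. Using faithfulness of $\varphi$, I would identify $\mathcal H(z,z)$ with a subgroup of $\mathcal G(\varphi(z), \varphi(z))$ and pick a system $S$ of representatives for the right cosets $\mathcal H(z,z) \backslash \mathcal G(\varphi(z), \varphi(z))$. This yields a decomposition
\[\ell^2(\mathcal G(\varphi(z), \varphi(x)), A_{\varphi(z)}) \;\cong\; \bigoplus_{g \in S} \ell^2(\mathcal H(z,x)\cdot g,\, A_{\varphi(z)}),\]
and the same translation trick as in diagram \eqref{independentofz} identifies each summand unitarily with $\ell^2(\mathcal H(z,x), A_{\varphi(z)})$ in a way that intertwines the representations. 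Hence $\Lambda_{A,\mathcal G, \varphi(z)} \circ (\id_A \varphi)$ is equivalent to an $|S|$-fold direct sum of $\Lambda_{A,\mathcal H, z}$, which gives equality of norms and thus an isometric extension.

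The step I expect to be the main obstacle is the commutativity in part (i): one needs to know that $\varphi_z \otimes \id$ does extend to a norm-decreasing map on the subalgebra of $M(A_z \otimes \mathcal K)$ hit by $\Lambda_{A,\mathcal G,z}$, even though it does not extend to the full multiplier algebra. Verifying this requires unwinding the identification of adjointable operators with multipliers and checking a non-degeneracy condition; once this is in hand, everything else in the proof is a routine unwinding of definitions.
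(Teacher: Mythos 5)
Your proposal is correct and follows essentially the same route as the paper's own proof: part (i) via the commuting square through $M(A_z\otimes\mathcal K)$ with the norm-decreasing extension of $\varphi_z\otimes\id$ (which the paper justifies by citing Echterhoff et al.), and part (ii) via the coset decomposition over $S$ and the translation unitaries $\rho_g\otimes\alpha_g$ exhibiting $\Lambda_{A,\mathcal G,\varphi(z)}\circ(\id_A\varphi)$ as an $|S|$-fold direct sum of $\Lambda_{A,\mathcal H,z}$. The step you flag as the main obstacle is exactly the one the paper resolves by reference to \cite[Def. A.3, Prop. A.6 (i)]{echterhoff2002categorical}.
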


\subsection*{$C^*$-algebras associated to $C^*$-categories}	
	We now recall the construction from \cite{joachim2003k} of a $K$-theory spectrum for $C^*$-categories. The idea is to first associate a $C^*$-algebra to a $C^*$-category and then associate a $K$-theory spectrum to this $C^*$-algebra. There are two $KK$-equivalent constructions of the associated $C^*$-algebra. The first construction is easier to compute for our examples while the second construction has better functoriality properties.

	\begin{defn}[\cite{joachim2003k}]
		Let $\mathcal A$ be a $C^*$-category. We equip 
			\[\mathfrak C^*_0\mathcal A:=\bigoplus_{x,y}\mathcal A(x,y)\]
		with the structure of a $*$-algebra by inheriting the involution from $\mathcal A$ and by defining the product of two elements $f\in \mathcal A(x,y),g\in \mathcal A(z,w)$ to be
			\[g\cdot f:=\begin{cases}gf,& y=z \\ 0,&y\not=z	\end{cases}.\]

		We denote by $\mathfrak C^*\mathcal A$ the enveloping $C^*$-algebra of $\mathfrak C^*_0\mathcal A$, i.e. the completion with respect to the supremum of all $C^*$-semi-norms.

	\end{defn}
	
	In \cite{joachim2003k}, the above $C^*$-algebra is denoted by $A_\mathcal A$ rather than $\mathfrak C^*\mathcal A$. 	
	
	\begin{rem}
	\begin{enumerate}	
	\item The supremum of all $C^*$-semi-norms $\rho$ on $\mathfrak C^*_0\mathcal A$ is indeed finite: The semi-norm of an element $a=\sum_{x,y} a_{xy}\in \mathfrak C^*_0\mathcal A$ with $a_{xy}\in \mathcal A(x,y)$ can be  bounded by
			\[\rho(a)\leq \sum_{x,y}\rho(a_{xy})=\sum_{x,y}\rho(a_{xy}^*a_{xy})^{\frac 1 2}\leq \sum_{x,y}\|a_{xy}^*a_{xy}\|^{\frac 1 2}.\]
	since each $C^*$-algebra $\mathcal A(x,x)$ has a unique $C^*$-norm. Furthermore it is shown in \cite{joachim2003k} that the supremum is indeed a \emph{norm}.

	\item The $C^*$-category $\mathfrak C^*\mathcal A$ has the following universal property: Given any $C^*$-algebra $B$ and any $C^*$-functor $F:\mathcal A\to B$ satisfying $F(f)F(g)=0$ for all non-composable morphisms $f$ and $g$, there is a unique $*$-homomorphism $\mathfrak C^*F:\mathfrak C^*\mathcal A\to B$ such that the following diagram commutes:
		\[\begin{tikzcd}
			\mathcal A\arrow[d]\arrow[r,"F"]&B\\
			\mathfrak C^*\mathcal A\arrow[ur,"\mathfrak C^*F"']&
		\end{tikzcd}\]
	\end{enumerate}
	\end{rem}

Unfortunately, the assignment $\mathcal A\mapsto \mathfrak C^* \mathcal A$ is not functorial with respect to arbitrary $C^*$-functors. Before giving a functorial construction, we list some useful properties of $\mathfrak C^*\mathcal A$. 

\begin{lem}	\label{isometriconmorphisms}
	Let $\mathcal A$ be a $C^*$-category, $B$ a $C^*$-algebra and $F:\mathcal A\to B$ a $C^*$-functor satisfying $F(f)F(g)=0$ whenever $f$ and $g$ are non-composable morphisms in $\mathcal A$. Suppose that $F$ is isometric on morphism sets and that 
		\[\mathfrak C^*_0F:\mathfrak C^*_0\mathcal A\to B\]
	is injective. Then 
		$\mathfrak C^*F:\mathfrak C^*\mathcal A\to B$
	is isometric. 
	\begin{proof}
		The proof is a variant of the proof of \cite[Lem. 3.6]{joachim2003k}. 
		We have to show that $\mathfrak C^*_0F$ is isometric. By construction, we have
			\[\mathfrak C^*_0\mathcal A=\bigcup_{\mathcal A'}\mathfrak C^*_0\mathcal A',\]
		where the union is taken over all full subcategories $\mathcal A'\subseteq \mathcal A$ with only finitely many objects. It suffices to show, that $\mathfrak C^*_0F$ is isometric on each $\mathfrak C^*_0\mathcal A'$. Since $F$ is isometric and $\mathfrak C^*_0F$ is injective, it suffices to show that there is only one $C^*$-norm $\|\cdot \|$ on $\mathfrak C^*_0\mathcal A'$ which restricts to the given norm on the morphism sets (note that the inclusions $\mathcal A'(x,y)\hookrightarrow \mathfrak C^*\mathcal A'$ are isometric since $F$ is isometric and $\mathfrak C^*F$ norm-decreasing). Write $a\in \mathfrak C^*_0\mathcal A'$ as a finite sum
			\[a=\sum a_{xy},\quad a_{xy}\in \mathcal A'(x,y)\]
		and denote by $N$ the number of objects of $\mathcal A'$. Then the estimate 
			\begin{equation}\label{alreadycomplete}
				\max_{x,y} \|a_{xy}\|\leq \|a\|\leq N^2\max_{x,y} \|a_{xy}\|
			\end{equation}
		shows that $\|\cdot \|$ is already complete on $\mathfrak C^*_0\mathcal A'$ and therefore the unique $C^*$-norm with this property. The first inequality in \eqref{alreadycomplete} can be verified by writing $a_{xy}=\lim_{\lambda} u_\lambda av_\lambda$ for approximate units $u_\lambda \in \mathcal A'(y,y)$ and $v_\lambda \in \mathcal A'(x,x)$. 
	\end{proof}
\end{lem}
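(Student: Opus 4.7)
The plan is to reduce isometry to a uniqueness statement for $C^*$-norms on finite subobjects, and to pin down that uniqueness by a two-sided estimate involving approximate units. Since $\mathfrak{C}^*_0\mathcal{A}=\bigcup_{\mathcal{A}'}\mathfrak{C}^*_0\mathcal{A}'$ where $\mathcal{A}'$ runs through the full subcategories with finitely many objects, and $\mathfrak{C}^*F$ is automatically norm-decreasing, it suffices to show that $\mathfrak{C}^*_0 F$ is isometric on each $\mathfrak{C}^*_0\mathcal{A}'$. So I would fix such an $\mathcal{A}'$ with $N$ objects and work only there.

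For a fixed $\mathcal{A}'$, the pullback of the norm on $B$ via the injective $*$-homomorphism $\mathfrak{C}^*_0 F$ defines a $C^*$-norm $\|\cdot\|_B$ on $\mathfrak{C}^*_0\mathcal{A}'$ that restricts to the given (isometric) norm on every morphism space $\mathcal{A}'(x,y)$. On the other hand, the norm inherited from $\mathfrak{C}^*\mathcal{A}$ is another such $C^*$-norm, since the inclusions $\mathcal{A}'(x,y)\hookrightarrow \mathfrak{C}^*\mathcal{A}$ are isometric (the composition with $\mathfrak{C}^*F$ is isometric by hypothesis, and $\mathfrak{C}^*F$ is norm-decreasing). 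The strategy is therefore to show that any $C^*$-norm $\|\cdot\|$ on $\mathfrak{C}^*_0\mathcal{A}'$ that restricts to the given norm on each morphism space is uniquely determined.

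The key step is to establish the two-sided estimate
\[
\max_{x,y}\|a_{xy}\|\;\le\;\|a\|\;\le\;N^2\max_{x,y}\|a_{xy}\|
\]
for any $a=\sum_{x,y}a_{xy}\in \mathfrak{C}^*_0\mathcal{A}'$ with $a_{xy}\in \mathcal{A}'(x,y)$. The upper bound is just the triangle inequality summed over the $N^2$ pairs. For the lower bound, I would extract each component $a_{xy}$ by sandwiching with approximate units $u_\lambda\in \mathcal{A}'(y,y)$ and $v_\lambda\in \mathcal{A}'(x,x)$: one checks that $u_\lambda a v_\lambda\to a_{xy}$ in $\mathcal{A}'(x,y)$ (all other summands are killed in the limit), giving $\|a_{xy}\|\le \|a\|$. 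Because any $C^*$-norm with the prescribed restriction is then equivalent to the maximum norm $\max\|a_{xy}\|$, the underlying normed vector space is already complete; as the completion determines a $C^*$-norm uniquely, all such norms coincide. In particular $\|\cdot\|_B=\|\cdot\|_{\mathfrak{C}^*\mathcal{A}}$ on $\mathfrak{C}^*_0\mathcal{A}'$, which is what we wanted.

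The main obstacle I expect is the lower inequality in the two-sided estimate. The algebras $\mathcal{A}'(x,x)$ need not be unital, so one genuinely has to pass to approximate units in each of the $N$ endomorphism algebras simultaneously and verify that $u_\lambda(\sum a_{zw})v_\lambda$ indeed converges to $a_{xy}$ rather than some nearby element; once this is in place, the rest of the argument is a straightforward manipulation of $C^*$-norms on a finite-dimensional direct sum of Banach spaces.
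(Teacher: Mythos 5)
Your proposal is correct and follows essentially the same route as the paper's proof: reduction to full subcategories $\mathcal A'$ with finitely many objects, reformulation as uniqueness of the $C^*$-norm on $\mathfrak C^*_0\mathcal A'$ restricting to the given norms on morphism sets, and the two-sided estimate $\max_{x,y}\|a_{xy}\|\leq \|a\|\leq N^2\max_{x,y}\|a_{xy}\|$ with the lower bound obtained by sandwiching with approximate units $u_\lambda\in\mathcal A'(y,y)$, $v_\lambda\in\mathcal A'(x,x)$. The only point worth stating slightly more carefully is that uniqueness follows because completeness makes $(\mathfrak C^*_0\mathcal A',\|\cdot\|)$ a $C^*$-algebra, and a $*$-algebra admits at most one complete $C^*$-norm; this is exactly the paper's argument.
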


\begin{cor}\label{classicalgroupoidcrossedproduct}
	Let $A$ be a $\mathcal G$-$C^*$-algebra. Then $\mathfrak C^*(A\rtimes_r \mathcal G)$ is naturally isomorphic to the classical reduced crossed product $C^*$-algebra of $A$ as defined in \cite[Sec. 1.4]{anantharaman2016some}.
	\begin{proof}
		Denote the classical reduced crossed product of $A$ by $\widetilde{A\rtimes_r \mathcal G}$. Although using different notation, it is precisely defined as the closed image of the regular representation $\Lambda_{A,\mathcal G}$ from \eqref{regularreresentationeq}.
		Since $\Lambda_{A,\mathcal G}$ is by definition isometric on morphism sets, Lemma \ref{isometriconmorphisms} provides us with an isomorphism
			\[\mathfrak C^*\Lambda_{A,\mathcal G}:\mathfrak C^*(A\rtimes_r \mathcal G)\to \widetilde{A\rtimes_r \mathcal G}.\]
	\end{proof}
\end{cor}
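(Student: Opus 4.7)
The plan is to exhibit the desired isomorphism as $\mathfrak C^*\Lambda_{A,\mathcal G}$, where $\Lambda_{A,\mathcal G}$ is the regular representation from \eqref{regularreresentationeq}, and to deduce everything from Lemma \ref{isometriconmorphisms}.

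First I would observe that the classical reduced crossed product $\widetilde{A\rtimes_r\mathcal G}$ in the sense of \cite[Sec. 1.4]{anantharaman2016some} is, by definition, the norm closure inside $\prod_{z\in\Ob(\mathcal G)}\mathcal L_{A_z}\bigl(\bigoplus_x\ell^2(\mathcal G(z,x),A_z)\bigr)$ of the $*$-algebra $\Lambda_{A,\mathcal G}(\mathfrak C^*_0(A\mathcal G))$; unwinding both sides of \cite[Sec. 1.4]{anantharaman2016some} produces exactly the formulas for $\Lambda_{A,\mathcal G,z}$ given in Definition \ref{groupoid-regular-representation}. This tautologically matches the generators, so the remaining content is to see that $\Lambda_{A,\mathcal G}$ induces an \emph{isometric} $*$-homomorphism out of $\mathfrak C^*(A\rtimes_r\mathcal G)$ whose image has dense intersection with $\widetilde{A\rtimes_r \mathcal G}$.

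Next I would verify the three hypotheses of Lemma \ref{isometriconmorphisms} for $F:=\Lambda_{A,\mathcal G}$, viewed as a $C^*$-functor from $A\rtimes_r\mathcal G$ into the product $C^*$-algebra above. Isometry on morphism sets is built into the definition of the reduced norm \eqref{reducednorm}, with independence of $z$ recorded in the remark after Definition \ref{groupoid-regular-representation}. The vanishing condition $F(f)F(g)=0$ for non-composable $f\in (A\rtimes_r\mathcal G)(x,y)$ and $g\in(A\rtimes_r\mathcal G)(x',y')$ with $y\neq x'$ holds because $\Lambda_{A,\mathcal G,z}(f)$ carries the $x$-summand $\ell^2(\mathcal G(z,x),A_z)$ into the $y$-summand and kills the other summands, so the range of $F(g)$ does not meet the domain of $F(f)$. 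Injectivity of $\mathfrak C^*_0 F$ on $\bigoplus_{x,y}(A\rtimes_r\mathcal G)(x,y)$ then follows: the images of distinct morphism spaces land in distinct $(x,y)$-corners of the target, so any finite sum in the kernel decomposes coordinatewise into pieces, each of which must vanish by the isometry on morphism sets.

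With the hypotheses of Lemma \ref{isometriconmorphisms} satisfied, $\mathfrak C^*F$ is an isometric $*$-homomorphism, and its image is a closed $*$-subalgebra containing $\Lambda_{A,\mathcal G}(\mathfrak C^*_0(A\mathcal G))$. Since this set is norm-dense in both $\mathfrak C^*F(\mathfrak C^*(A\rtimes_r\mathcal G))$ and in $\widetilde{A\rtimes_r\mathcal G}$, the two closures coincide and $\mathfrak C^*F$ yields the desired isomorphism. Naturality in $A$ follows from Lemma \ref{functorial-reduced-crossed-product}(i) together with functoriality of $\mathfrak C^*(-)$ on the class of $C^*$-functors appearing here. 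The only point that requires any care is the orthogonality/injectivity bookkeeping for $\mathfrak C^*_0 F$ in the multi-object setting; once that is in place, the statement reduces to a direct invocation of Lemma \ref{isometriconmorphisms}.
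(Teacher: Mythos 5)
Your proposal is correct and follows essentially the same route as the paper: both identify the classical reduced crossed product as the closed image of the regular representation $\Lambda_{A,\mathcal G}$ and then invoke Lemma \ref{isometriconmorphisms} to obtain the isomorphism $\mathfrak C^*\Lambda_{A,\mathcal G}$. You simply spell out the hypothesis checks (orthogonality of non-composable morphisms, injectivity of $\mathfrak C^*_0\Lambda_{A,\mathcal G}$, density of images) that the paper leaves implicit.
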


In particular, we obtain the following special case:

\begin{cor}\label{classicalcrossedproduct}
	Let $G$ be a discrete group acting on a set $X$. Let $A$ be a $G$-$C^*$-algebra and consider $A$ as an $\overline X$-$C^*$-algebra as in Example \ref{transformationgroupoids}. Then there is a natural isomorphism
		\[\mathfrak C^*(A\rtimes_r \overline X)\cong C_0(X,A)\rtimes_r G.\]
\end{cor}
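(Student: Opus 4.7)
The idea is to use Corollary~\ref{classicalgroupoidcrossedproduct} to reduce the statement to the classical identification of a groupoid reduced crossed product over a transformation groupoid with the corresponding reduced group crossed product. By that corollary, $\mathfrak C^*(A\rtimes_r\overline X)$ is naturally isomorphic to the classical reduced groupoid crossed product $\widetilde{A\rtimes_r\overline X}$, defined as the closed image of the regular representation $\Lambda_{A,\overline X}$ from \eqref{regularreresentationeq}. It thus suffices to exhibit a natural isomorphism $\widetilde{A\rtimes_r\overline X}\cong C_0(X,A)\rtimes_r G$.

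First I would construct an isomorphism $\Phi_0$ at the convolution level. Since $A_x=A$ for every $x\in X$ in the transformation groupoid, every generator $au_g\in A\overline X(x,y)$ (with $gx=y$ and $a\in A$) may be sent to the element $a\delta_{gx}u_g$ in $C_c(X,A)\rtimes_\mathrm{alg} G$, where $\delta_y$ denotes the characteristic function of $\{y\}\subseteq X$ and $C_c(X,A)$ is the algebraic direct sum $\bigoplus_{x\in X}A$. Linear extension gives a bijection $\Phi_0$ from $\bigoplus_{x,y}A\overline X(x,y)$ onto the dense $*$-subalgebra $C_c(X,A)\rtimes_\mathrm{alg} G$ of $C_0(X,A)\rtimes_r G$. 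A direct comparison of the convolution formula $au_g\cdot bu_h=a\alpha_g(b)u_{gh}$ from Definition~\ref{convolutioncategory} with the twisted product in the group crossed product (which arises from $u_g\phi u_g^{-1}(y)=\alpha_g(\phi(g^{-1}y))$) verifies that $\Phi_0$ is a $*$-algebra homomorphism.

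The main step is to show that $\Phi_0$ extends to the $C^*$-completions by matching the regular representations. Fix $z\in X$. The map $g\mapsto(g\colon z\to gz)$ is a bijection $G\to\coprod_{x\in X}\overline X(z,x)$, which identifies the target Hilbert $A$-module $\bigoplus_x\ell^2(\overline X(z,x),A)$ of $\Lambda_{A,\overline X,z}$ with $\ell^2(G,A)$. Using the formula $\Lambda_{A,\overline X,z}(au_g)\xi(h)=\alpha_{h^{-1}}(a)\xi(g^{-1}h)$ from Definition~\ref{groupoid-regular-representation}, one checks that under this identification $\Lambda_{A,\overline X,z}\circ\Phi_0^{-1}$ agrees on $C_c(X,A)\rtimes_\mathrm{alg} G$ with the standard covariant regular representation on $\ell^2(G,A)$ induced by the evaluation $\ev_z\colon C_0(X,A)\to A$. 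Summing over $z\in X$ and using that $\bigoplus_z\ev_z$ is a \emph{faithful} representation of $C_0(X,A)$, the closed image of the induced covariant representation on $\bigoplus_z\ell^2(G,A)$ is by definition of the reduced crossed product equal to $C_0(X,A)\rtimes_r G$. Hence $\Phi_0$ extends to the desired isomorphism.

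Naturality in $A$ with respect to $G$-equivariant $*$-homomorphisms is immediate from functoriality of all constructions involved. The main technical obstacle is the careful bookkeeping in matching the two regular representations under the identification $\bigoplus_x\ell^2(\overline X(z,x),A)\cong\ell^2(G,A)$; once indices are unwound this becomes a routine but notationally heavy calculation.
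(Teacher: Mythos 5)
Your proposal is correct and follows the same route the paper intends: the paper derives this corollary directly from Corollary \ref{classicalgroupoidcrossedproduct}, leaving implicit the standard identification of the classical reduced crossed product of the transformation groupoid $\overline X$ with $C_0(X,A)\rtimes_r G$, which is exactly the identification you spell out by matching $\Lambda_{A,\overline X,z}$ with the regular representation induced from $\ev_z$. Your index-matching and the faithfulness of $\bigoplus_z \ev_z$ are exactly the details needed, so the argument is complete.
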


\begin{cor}\label{ccact-tensor-products}
	Let $A$ be a $\mathcal G$-$C^*$-algebra and $B$ a $C^*$-algebra (endowed with the trivial $\mathcal G$-action). Then there is a canonical $*$-isomorphism 
		\[\mathfrak C^*((A\otimes B)\rtimes_r \mathcal G)\cong \mathfrak C^*(A\rtimes_r \mathcal G)\otimes B.\]
	
		\begin{proof}
			By Lemma \ref{isometriconmorphisms}, the representation 
				\[\mathfrak C^*\Lambda_{A\otimes B,\mathcal G}:\mathfrak C^*((A\otimes B)\rtimes_r\mathcal G)\to  \prod_z\mathcal L_{A_z\otimes B}(\oplus_x\ell^2(\mathcal G(z,x),A_z\otimes B))\]
			is faithful. Its image coincides with the image of the faithful representation 
				\[\mathfrak C^*(A\rtimes_r\mathcal G)\otimes B\to \prod_z \mathcal L_{A_z}(\oplus_x \ell^2(\mathcal G(z,x),A_z))\otimes B\to \prod_z \mathcal L_{A_z\otimes B}(\oplus_x \ell^2(\mathcal G(z,x),A_z\otimes B)).\]
		\end{proof}
\end{cor}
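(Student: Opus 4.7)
My plan is to realize both sides of the asserted isomorphism as the image of a faithful representation on a common Hilbert module, extending the strategy used in Corollary \ref{classicalgroupoidcrossedproduct}. For the left-hand side, the regular representation $\mathfrak C^*\Lambda_{A\otimes B,\mathcal G}$ is faithful by Lemma \ref{isometriconmorphisms} (its hypotheses are verified exactly as in the proof of Corollary \ref{classicalgroupoidcrossedproduct}). For the right-hand side, I would use the tensor product representation $\mathfrak C^*\Lambda_{A,\mathcal G}\otimes \id_B$, which is faithful on the minimal tensor product because it arises as the tensor product of two faithful $*$-representations into multiplier algebras of $C^*$-algebras of operators on Hilbert modules.

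The central computation is to identify the two images. Under the natural unitary isomorphism
\[
\ell^2(\mathcal G(z,x),A_z\otimes B)\;\cong\;\ell^2(\mathcal G(z,x),A_z)\otimes B
\]
of Hilbert $(A_z\otimes B)$-modules, a direct calculation on generators $(a\otimes b)u_g$ with $a\in A_y$, $b\in B$, $g\in \mathcal G(x,y)$, using the formula defining $\Lambda$ in Definition \ref{groupoid-regular-representation}, gives
\[
\Lambda_{A\otimes B,\mathcal G,z}\bigl((a\otimes b)u_g\bigr) \;=\; \Lambda_{A,\mathcal G,z}(au_g)\otimes L_b,
\]
where $L_b$ denotes left multiplication by $b$ on $B$. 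Hence the two faithful representations have the same dense image on the level of $\mathfrak C^*_0$ and therefore the same closed image, yielding a canonical $*$-isomorphism which sends $(a\otimes b)u_g \mapsto (au_g)\otimes b$ on generators.

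The step I expect to be the main (though mild) obstacle is the bookkeeping around tensor products. One has to be careful that the outer tensor product $\mathfrak C^*(A\rtimes_r\mathcal G)\otimes B$ is the minimal one (as per the paper's global convention), that the Hilbert-module tensor product behaves compatibly with the spatial tensor product on the coefficient algebras $A_z\otimes B$, and that tensoring the faithful representation $\mathfrak C^*\Lambda_{A,\mathcal G}$ with $\id_B$ still yields a faithful representation of the minimal tensor product — this last point relying on the fact that faithfulness is preserved under the minimal tensor product of representations. Once these points are pinned down, the coincidence of the two images delivers the desired isomorphism.
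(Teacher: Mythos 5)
Your proposal is correct and follows essentially the same route as the paper: both realize the two algebras as images of faithful representations on the common Hilbert modules $\ell^2(\mathcal G(z,x),A_z\otimes B)\cong \ell^2(\mathcal G(z,x),A_z)\otimes B$, using Lemma \ref{isometriconmorphisms} for faithfulness on the left and the minimal tensor product of faithful representations on the right. Your explicit generator computation $\Lambda_{A\otimes B,\mathcal G,z}((a\otimes b)u_g)=\Lambda_{A,\mathcal G,z}(au_g)\otimes L_b$ is exactly the (unwritten) verification behind the paper's assertion that the two images coincide.
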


	\begin{defn}[{\cite[Def. 3.7]{joachim2003k}}]\label{defn-of-universal-algebra}
	Let $\mathcal A$ be a $C^*$-category. We denote by $\mathfrak C^*_f\mathcal A$ the universal $C^*$-algebra generated by symbols $(f)$ for morphisms $f\in \mathcal A(x,y)$ subject to the relations
	
		\[(\lambda f+g)=\lambda (f)+(g), \quad (f^*)=(f)^*, \quad (hg)=(h)(g)\]
		for $f,g\in \mathcal A(x,y),h\in \mathcal A(y,z)$ and $\lambda \in \C$.
	By construction, $\mathcal A\mapsto \mathfrak C^*_f\mathcal A$ is the left adjoint functor of the inclusion functor from the category of $C^*$-categories to the category of $C^*$-algebras.
\end{defn}

In \cite{joachim2003k}, the above algebra is denoted by $A^f_\mathcal A$ rather than $\mathfrak C^*_f \mathcal A$. 

\begin{prop}[{\cite[Prop. 3.8]{joachim2003k}}]\label{functorialcalgebra}
	Let $\mathcal A$ be a $C^*$-category with countably many objects and separable morphism sets. Then the canonical $*$-homomorphism $\mathfrak C^*_f\mathcal A\to \mathfrak C^*\mathcal A$ is a stable homotopy equivalence and therefore a $KK$-equivalence.
\end{prop}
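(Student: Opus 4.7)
The plan is to exploit that the canonical map $\mathfrak C^*_f\mathcal A\to \mathfrak C^*\mathcal A$ is surjective, since the morphism generators $(f)$ of $\mathfrak C^*_f\mathcal A$ are sent to the morphisms in $\mathfrak C^*_0\mathcal A\subseteq \mathfrak C^*\mathcal A$, which generate the latter as a $C^*$-algebra. The goal is therefore to construct a stable homotopy inverse; the $KK$-equivalence assertion follows formally.

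First I would reduce to the case where $\mathcal A$ has finitely many objects. Both constructions commute with filtered colimits over the directed system of full subcategories of $\mathcal A$ with finitely many objects (for $\mathfrak C^*_f$ this follows from its left-adjoint description in Definition \ref{defn-of-universal-algebra}, and for $\mathfrak C^*$ it is immediate from the direct-sum presentation of $\mathfrak C^*_0\mathcal A$). Since stable equivalence and $KK$-equivalence of separable $C^*$-algebras are preserved under such colimits, it suffices to treat the finite-object case; separability of the morphism sets keeps us within the separable world throughout.

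For finite $\mathcal A$ with objects $x_1,\dots,x_n$, I would pass to the stabilizations $\mathfrak C^*_f\mathcal A\otimes \mathcal K$ and $\mathfrak C^*\mathcal A\otimes \mathcal K$ and construct a $*$-homomorphism in the reverse direction by conjugating the canonical inclusion $\mathcal A \hookrightarrow \mathfrak C^*\mathcal A\otimes \mathcal K$ by a family of orthogonal isometries $v_{x_1},\dots,v_{x_n}\in M(\mathcal K)$ satisfying $v_{x_i}^*v_{x_j}=\delta_{ij}\cdot 1$. These isometries play the role of ``matrix units'' enforcing the source--target orthogonality $(f)(g)=0$ for non-composable $f,g$, which is automatic in $\mathfrak C^*\mathcal A$ but not in $\mathfrak C^*_f\mathcal A$. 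Since $M(\mathcal K)$ is path-connected in the strict topology, one can interpolate along a norm-continuous family of such isometries to exhibit explicit homotopies showing that both composites with the canonical surjection are homotopic to the respective identities.

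The main obstacle will be verifying that the conjugation construction, which a priori only produces a map out of the universal algebra $\mathfrak C^*_f\mathcal A\otimes \mathcal K$, genuinely descends to a $*$-homomorphism from $\mathfrak C^*\mathcal A\otimes \mathcal K$, i.e.\ that it is continuous with respect to the potentially smaller enveloping norm. Lemma \ref{isometriconmorphisms} is the key technical input here: because the ambient matrix-unit representation is isometric on morphism sets and respects the source-target grading, the factorization through $\mathfrak C^*\mathcal A\otimes \mathcal K$ is forced, and the norm estimates required to keep the interpolating homotopy inside the target can be read off from the uniqueness of the $C^*$-norm on each $\mathcal A(x,y)$.
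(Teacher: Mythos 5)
First, a point of orientation: the paper does not prove this proposition at all --- it is quoted from \cite[Prop.~3.8]{joachim2003k}, and the only proof-related remark in the text is that Joachim's unitality hypotheses are never used. So there is no in-paper argument to match your sketch against; it has to be judged against Joachim's, whose core mechanism --- isometries or matrix units indexed by the objects, used to enforce the source--target orthogonality, followed by rotation homotopies after stabilization --- you have correctly identified.

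That said, the sketch as written has a genuine gap in the reduction step: the assertion that ``stable equivalence and $KK$-equivalence of separable $C^*$-algebras are preserved under such colimits'' is false in general. A filtered colimit of (stable) homotopy equivalences need not be a (stable) homotopy equivalence, because the homotopy inverses and the homotopies need not be chosen compatibly with the connecting maps, and $KK$-theory is likewise not continuous with respect to inductive limits. Fortunately the reduction is also unnecessary: since $\Ob(\mathcal A)$ is countable you can use matrix units $e_{yx}\in\mathcal K(\ell^2(\Ob(\mathcal A)))$ directly and set $\Phi(f):=(f)\otimes e_{yx}$ for $f\in\mathcal A(x,y)$, with no finiteness assumption. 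Two further corrections to the main step. The homotopy inverse must be a map $\mathfrak C^*\mathcal A\to\mathfrak C^*_f\mathcal A\otimes\mathcal K$, so the functor to be ``conjugated'' should land in $\mathfrak C^*_f\mathcal A\otimes\mathcal K$ rather than in $\mathfrak C^*\mathcal A\otimes\mathcal K$, and the descent from $\mathfrak C^*_f\mathcal A$ to $\mathfrak C^*\mathcal A$ is supplied not by Lemma~\ref{isometriconmorphisms} but by the universal property of $\mathfrak C^*\mathcal A$ recorded in the Remark following its definition: $\Phi$ annihilates products of non-composable morphisms because $e_{yx}e_{wz}=\delta_{xw}e_{yz}$, hence extends to $\mathfrak C^*\mathcal A$ (Lemma~\ref{isometriconmorphisms} would only add that the extension is isometric, which is not needed). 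Finally, the homotopies are the real content and are only asserted; strict path-connectedness of $M(\mathcal K)$ is not the relevant statement. One needs norm-continuous paths of partial isometries $w_x(t)$ with constant initial projection $e_{00}$ joining $e_{x0}$ to $e_{00}$, so that $f\mapsto (f)\otimes w_y(t)w_x(t)^*$ is, for every $t$, a $C^*$-functor extending to the appropriate universal algebra (on the $\mathfrak C^*\mathcal A$ side the non-composable products cause no trouble at intermediate times because they already vanish in $\mathfrak C^*\mathcal A$), deforming both composites to the corner embedding $a\mapsto a\otimes e_{00}$, which is homotopic to the identity after a further stabilization by the standard isometry argument. With these repairs your outline does reproduce the cited proof.
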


The reader should not be concerned about the unitality assumptions in \cite{joachim2003k} since they are not used in the proof of the above proposition. 

\subsection*{A $K$-theory spectrum}

We now recall very briefly the construction of the $K$-theory spectrum $\K$ from \cite{joachim2003k}. We use this particular model because it is quite easy to show that $\K$ maps mapping cone sequences to fiber sequences, $KK$-equivalences to stable equivalences, suspensions to loops and direct sums to wedge sums. The definition of $\K$ involves graded $C^*$-algebras. We only recall the basic definitions and refer to \cite{blackadar1998k} for a more detailed account on graded $C^*$-algebras. A graded $C^*$-algebra is a $\Z_2$-$C^*$-algebra, i.e. a $C^*$-algebra $A$ together with a self-inverse \emph{grading automorphism} $\alpha$. We will need the following examples:

\begin{enumerate}	
	\item We denote by $\hat {\mathcal K}$ the graded $C^*$-algebra of compact operators on $\ell^2\N\oplus \ell^2\N$ with grading automorphism given by conjugation with the unitary $\begin{pmatrix}0&1\\ 1&0\end{pmatrix}$. 
	\item We denote by $\hat {\mathcal S}$ the $C^*$-algebra $C_0(\R)$ with grading automorphism given by reflecting functions at the origin $0\in \R$.
	\item The \emph{Clifford algebra} $\C_n$ on $n$ generators is the universal $C^*$-algebra generated by selfadjoint unitaries $e_1,\dotsc,e_n$ satisfying
		$e_ie_j=-e_je_i,\quad i\not=j$
	The grading automorphism of $\C_n$ is given by $e_i\mapsto -e_i$. 
	\item If not specified otherwise, we equip any $C^*$-algebra with a trivial grading. 
\end{enumerate}

For any two graded $C^*$-algebras $A$ and $B$, there is a \emph{spatial graded tensor product} $A\hat \otimes B$. It is a completion of the algebraic tensor product $A\odot B$ equipped with a non-standard multiplication and involution depending on the grading \cite[Def. 14.4.1]{blackadar1998k}. If one of the factors is trivially graded, then $A\hat \otimes B$ is naturally isomorphic to the usual spatial tensor product. We denote the space of $\Z_2$-equivariant $*$-homomorphisms $A\to B$ by $\mathfrak C^*_{\Z_2}(A,B)$ and endow it with the compact-open topology and the zero morphism as a basepoint. 

\begin{prop}[{\cite[Prop. 4.1]{joachim2003k}}]\label{mapping-loops}
	Let $A,B$ be graded $C^*$-algebras and $X$ a locally compact space. Then there is a natural homeomorphism
		\[\grc(A,B\hat \otimes C_0(X))\to \Top_*(X^+, \grc(A,B)),\quad f\mapsto (x\mapsto (\id_B\hat\otimes\ev_x)\circ f),\]	
	where $X^+$ denotes the one-point compactification and $\ev_x:C_0(X)\to \C$ the evaluation at $x$.
\end{prop}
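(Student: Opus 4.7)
The plan is to build an explicit inverse and then identify the compact-open topologies on the two sides. I begin by identifying $B \hat\otimes C_0(X) \cong C_0(X,B)$, which is valid because $C_0(X)$ is trivially graded; under this identification the $\Z_2$-action on the target is pointwise application of the grading on $B$, so an element $f \in \grc(A, B\hat\otimes C_0(X))$ is a $\Z_2$-equivariant $*$-homomorphism $f : A \to C_0(X,B)$, and the map of the statement becomes $\Phi(f)(x) = \ev_x \circ f$.

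I would first check that $\Phi$ is well-defined. For each fixed $x \in X$, $\Phi(f)(x)$ is a composition of $\Z_2$-equivariant $*$-homomorphisms, hence lies in $\grc(A, B)$. For each fixed $a \in A$ the assignment $x \mapsto f(a)(x)$ is by construction an element of $C_0(X, B)$, so it extends continuously to $X^+$ with value $0$ at $\infty$; this forces $\Phi(f)(\infty)$ to be the zero morphism. Continuity of $\Phi(f) : X^+ \to \grc(A,B)$ in the compact-open topology reduces to showing that for every compact $K \subseteq A$ and open $U \subseteq B$, the set $\{x \in X^+ : \Phi(f)(x)(K) \subseteq U\}$ is open, which follows from joint continuity of $(x,a) \mapsto f(a)(x)$ on $X^+ \times K$ (using compactness of $K$ together with the fact that $f$ is norm-decreasing).

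For the inverse, given a pointed continuous $g : X^+ \to \grc(A,B)$ I would define $\Psi(g) : A \to C_0(X, B)$ by $\Psi(g)(a)(x) = g(x)(a)$. Continuity of $x \mapsto g(x)(a)$ is immediate from continuity of $g$ and of the evaluation $\ev_a : \grc(A,B) \to B$. Vanishing at infinity uses the basepoint condition $g(\infty) = 0$: for any $\epsilon > 0$ the set $\{\phi \in \grc(A,B) : \|\phi(a)\| < \epsilon\}$ is an open neighborhood of the zero morphism, so its preimage under $g$ is an open neighborhood of $\infty \in X^+$, whose complement $\{x \in X : \|g(x)(a)\| \geq \epsilon\}$ is compact in $X$. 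Hence $\Psi(g)(a) \in C_0(X, B)$. The map $\Psi(g)$ is a $\Z_2$-equivariant $*$-homomorphism because each $g(x)$ is, and it is automatically norm-continuous because $*$-homomorphisms of $C^*$-algebras are norm-decreasing.

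The identities $\Phi \circ \Psi = \id$ and $\Psi \circ \Phi = \id$ then follow directly from the definitions. For the topological statement, the compact-open topologies on both sides are generated by sub-basic sets indexed by a compact subset of $X$ (resp.\ $X^+$), a compact subset of $A$, and an open subset of $B$, and $\Phi$ simply interchanges the two roles. I expect the main technical step to be the vanishing-at-infinity verification for $\Psi(g)(a)$, which is precisely where the pointed structure on $\grc(A,B)$ and the one-point compactification of $X$ play off against each other; everything else is a routine exponential-law argument combined with the fact that being a $*$-homomorphism is a closed condition preserved under pointwise evaluation.
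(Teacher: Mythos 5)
The paper does not actually prove this proposition: it is imported verbatim from \cite[Prop.~4.1]{joachim2003k}, so there is no in-text argument to compare yours against. Your direct currying argument is correct and is, in substance, the standard proof. The identification $B\hat\otimes C_0(X)\cong C_0(X,B)$ is legitimate because $C_0(X)$ is trivially graded; the verification that $\Phi(f)$ lands in pointed maps (via $\Phi(f)(\infty)=0$), the joint continuity of $(x,a)\mapsto f(a)(x)$ on $X^+\times K$ using that $f$ is norm-decreasing, and the use of the basepoint condition $g(\infty)=0$ to get the $C_0$-decay of $\Psi(g)(a)$ are exactly the right steps, and the algebraic identities $\Phi\Psi=\id$, $\Psi\Phi=\id$ are immediate. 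The one place that deserves more care is your final sentence: the sub-basic opens on the left are indexed by a compact $K\subseteq A$ and an \emph{arbitrary} open $U\subseteq C_0(X,B)$, not by triples (compact in $X$, compact in $A$, open in $B$), so the ``interchange of roles'' is not literally a matching of sub-bases. The clean way to finish is to note that the compact-open topology on maps into the metric space $C_0(X,B)$ is the topology of uniform convergence on compact $K\subseteq A$, while, since $X^+$ is compact, the compact-open topology on $\Top_*(X^+,\grc(A,B))$ is the topology of uniform convergence over all of $X^+$ with respect to the pseudometrics $d_K(\varphi,\psi)=\sup_{a\in K}\|\varphi(a)-\psi(a)\|$ generating the topology of $\grc(A,B)$; both sides then become uniform convergence on $X^+\times K$, and $\Phi$ is a homeomorphism. (The paper's mapping spaces are formally $k$-ified, but a homeomorphism of compact-open topologies passes to the $k$-ifications, so this is harmless.) Naturality in $A$, $B$ and $X$ is evident from the formula. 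In short: no gap, just a final point-set step to spell out; what your approach buys over the paper's bare citation is a self-contained elementary proof.
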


\begin{defn}
	Let $A$ be a separable $C^*$-algebra. The spectrum $\K(A)$ is given by the sequence of pointed spaces 
		\[\K(A)_n:=\mathfrak C^*_{\Z_2}(\hat {\mathcal S},A\hat \otimes \C_n\hat \otimes \hat {\mathcal K})\]
	and structure maps $\K(A)_n\to \Omega\K(A)_{n+1}$ given by
		\[\mathfrak C^*_{\Z_2}(\hat {\mathcal S},A\hat \otimes \C_n\hat \otimes \hat {\mathcal K})\xrightarrow[\sim]{\beta}\mathfrak C^*_{\Z_2}(\hat {\mathcal S},A\hat \otimes \C_{n+1}\hat \otimes C_0((0,1))\hat \otimes \hat {\mathcal K})\overset{\ref{mapping-loops}}\cong \Omega\mathfrak C^*_{\Z_2}(\hat {\mathcal S},A\hat \otimes \C_{n+1}\hat \otimes \hat {\mathcal K}),\]
	where $\beta$ denotes the Bott map from \cite[Lecture 1]{higson2004group}.
	
	Let $\mathcal A$ be a $C^*$-category with countably many objects and separable morphism sets. The $K$-theory spectrum of $\mathcal A$ is given by 
		\[\K(\mathcal A):=\K(\mathfrak C^*_f\mathcal A)\simeq \K(\mathfrak C^*\mathcal A).\]
\end{defn}

\begin{prop}[{\cite[Thm. A.2]{joachim2003k}}]
	Let $A$ be a trivially graded separable $C^*$-algebra. Then there is a natural isomorphism 
		$\pi_*\K(A)\cong K_*(A)$.
\end{prop}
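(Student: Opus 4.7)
The plan is to identify $\pi_n\K(A)$ with $K_n(A)$ by first computing the unstable homotopy groups of each space $\K(A)_n$ via the Higson--Guentner/Trout description of $K$-theory as homotopy classes of graded $*$-homomorphisms out of $\hat{\mathcal S}$, and then to check that the structure maps coming from the Bott map realize the $C^*$-algebraic Bott periodicity. This shows both that $\K(A)$ is an $\Omega$-spectrum and that its $n$-th space already computes $K_n(A)$ on $\pi_0$.

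First I would isolate the purely unstable computation: for any graded separable $C^*$-algebra $B$, there is a natural isomorphism
\[
\pi_0\,\grc(\hat{\mathcal S},B\hat\otimes \hat{\mathcal K})\;\cong\; K_0(B),
\]
where the right-hand side is Kasparov's graded $K_0$. This is the content of the functional-calculus picture of $K$-theory: a graded $*$-homomorphism $\hat{\mathcal S}\to B\hat\otimes\hat{\mathcal K}$ is determined by the image of a bounded odd generator of $\hat{\mathcal S}=C_0(\R)$, and homotopies of such homomorphisms correspond to homotopies of cycles for $KK(\C,B)$. Specializing to $B=A\hat\otimes\C_n$ with $A$ trivially graded and using the standard identification $K_0(A\hat\otimes\C_n)\cong K_n(A)$ gives $\pi_0\K(A)_n\cong K_n(A)$.

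Next I would compute the higher unstable groups by iterating Proposition \ref{mapping-loops}. Applied inductively with $X=\R^k$ (whose one-point compactification is $S^k$), it yields a natural homeomorphism
\[
\Omega^k\,\grc(\hat{\mathcal S},B\hat\otimes\hat{\mathcal K})\;\cong\;\grc\bigl(\hat{\mathcal S},B\hat\otimes\hat{\mathcal K}\hat\otimes C_0(\R^k)\bigr),
\]
so that $\pi_k\,\grc(\hat{\mathcal S},B\hat\otimes\hat{\mathcal K})\cong K_0(B\hat\otimes C_0(\R^k))$ by Step~1. Together with the suspension isomorphism in $K$-theory this gives $\pi_k\K(A)_n\cong K_{n-k}(A)$.

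The remaining and most delicate point is to check that the structure map, which by definition is $\grc(\hat{\mathcal S},-)$ applied to the Bott morphism $\beta\colon A\hat\otimes\C_n\hat\otimes\hat{\mathcal K}\to A\hat\otimes\C_{n+1}\hat\otimes C_0((0,1))\hat\otimes\hat{\mathcal K}$, induces on $\pi_0$ precisely the $C^*$-algebraic Bott periodicity isomorphism $K_n(A)\cong K_{n+1}(SA)$. Once this is verified, the structure maps $\K(A)_n\to\Omega\K(A)_{n+1}$ are weak equivalences, so $\K(A)$ is an $\Omega$-spectrum with $\pi_n\K(A)\cong\pi_0\K(A)_n\cong K_n(A)$, and naturality is automatic since every step is functorial in $A$. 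The main obstacle is this last identification of the Bott map with the $K$-theoretic Bott element; this is exactly the content of Higson's treatment of Bott periodicity (see \cite[Lecture 1]{higson2004group}), and invoking that result turns the rest of the argument into routine bookkeeping.
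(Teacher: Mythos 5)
The paper does not actually prove this proposition; it imports it verbatim from Joachim \cite[Thm.~A.2]{joachim2003k}, whose proof (resting on Trout's spectral picture of $K$-theory together with the Higson--Guentner Bott map) is essentially the argument you sketch, so your outline is correct and follows the same route as the cited source. The one step that deserves an explicit citation rather than the phrase ``functional-calculus picture'' is the isomorphism $\pi_0\,\grc(\hat{\mathcal S},B\hat\otimes\hat{\mathcal K})\cong K_0(B)$ for \emph{graded} $B$ (you need it for $B=A\hat\otimes\C_n$): that homotopy classes of honest graded $*$-homomorphisms, as opposed to asymptotic morphisms, already compute $K_0$ is a genuine theorem of Trout and not a formal consequence of $\hat{\mathcal S}$ being generated by an even and an odd function.
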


\begin{defn}\label{defnofKAG}
	Let $G$ be a countable discrete group and $A$ a separable $G$-$C^*$-algebra. We define an $\Or(G)$-spectrum $\K_A^G$ by
		\[\K_A^G(G/H):=\K(A\rtimes_r \overline{G/H}).\]
	The \emph{Davis--L\"uck assembly map for $G$ with coefficients in $A$} is the map 
		\[H^G_*(\mathcal E_{\Fin}G,\K_A^G)\to H^G_*(\pt,\K_A^G)\]
	where $\Fin$ denotes the family of finite subgroups. 
\end{defn}

Note that the functoriality of $\mathfrak C^*_f$ and Lemma \ref{functorial-reduced-crossed-product} guarantee functoriality of $\K_A^G(G/H)$ both in $G/H$ for $G$-equivariant maps and in $A$ for $G$-equivariant $*$-homomorphisms. 

\begin{lem}\label{K is triangulated}
	The functor $A\mapsto \K_A^G$ from the category of separable $G$-$C^*$-algebras to the category of $\Or(G)$-spectra has the following properties:
	\begin{enumerate}
		\item It maps $KK^G$-equivalences to stable equivalences.
		\item It maps mapping cone sequences to fiber sequences.
		\item For any separable $G$-$C^*$-algebra $A$, we have $\K_{SA}^G\cong \Omega \K_A^G$.
		\item Let $A_i,i\in I$ be a countable family of separable $G$-$C^*$-algebras. Then there is a natural stable equivalence 
			$\bigvee_{i\in I}\K_{A_i}^G\simeq \K_{\oplus_{i\in I} A_i}^G$.
	\end{enumerate}
	\begin{proof}
		We fix $G/H\in \Or(G)$ throughout the proof.
		\begin{enumerate}
			\item Every $KK^G$-equivalence $A\to B$ induces a $KK$-equivalence \[C_0(G/H,A)\rtimes_r G\to C_0(G/H,B)\rtimes_r G.\] Therefore, the induced map 
				\begin{multline*}
					\hspace{2cm}\pi_*(\K_A^G(G/H))\overset{\ref{classicalcrossedproduct}}\cong K_*(C_0(G/H,A)\rtimes_r G)\\
					\to K_*(C_0(G/H,B)\rtimes_r G)\overset{\ref{classicalcrossedproduct}}\cong \pi_*(\K_B^G(G/H))\hspace{2cm}
				\end{multline*}
				is an isomorphism.
				
			\item We claim that the functor $A\mapsto \mathfrak C^*(A\rtimes_r\overline {G/H})$ preserves mapping cone sequences and that the functor $A\mapsto \K(A)$ maps mapping cone sequences to fiber sequences. Let $\Cone(\pi)\to A\xrightarrow{\pi} B$ be a mapping cone sequence. For the first claim, use Corollaries \ref{classicalcrossedproduct} and \ref{ccact-tensor-products} to identify $\mathfrak C^*(\Cone(\pi)\rtimes_r\overline{G/H})$ with the cone of the map
				$\mathfrak C^*(A\rtimes_r \overline {G/H})\to \mathfrak C^*(B\rtimes_r \overline{G/H})$.	
			For the second claim, use Proposition \ref{mapping-loops} to identify $\K(\Cone(\pi))$ with the homotopy fiber of the map 
				$\K(A)\to \K(B)$.

			\item By Corollary \ref{ccact-tensor-products}, the functor $A\mapsto \mathfrak C^*(A\rtimes_r\overline{G/H})$ commutes with suspensions. Now the claim follows from Proposition \ref{mapping-loops}.
			
			\item There is a natural map 
				$\bigvee_{i\in I}\K_{A_i}^G(G/H)\to \K_{\oplus_{i\in I}A_i}^G(G/H)$
				which we claim to be a stable equivalence. On homotopy groups, the above map can be written as the composition
				\begin{align*}
					&\pi_*\left(\bigvee_{i\in I}\K(\mathfrak C^*(A\rtimes_r\overline {G/H}))\right) \xrightarrow{\cong}\bigoplus_{i\in I}\pi_*\K(\mathfrak C^*(A_i\rtimes_r\overline{G/H}))			\\
					&\xrightarrow{\cong}\pi_*\K\left(\bigoplus_{i\in I}\mathfrak C^*(A_i\rtimes_r \overline{G/H})\right)\xrightarrow{\cong}\pi_*\K\left(\mathfrak C^*\left(\left(\bigoplus_{i\in I}A_i\right)\rtimes_r \overline {G/H}\right)\right).		
				\end{align*}
				The first map is an isomorphism by Lemma \ref{homotopydirectsums}, the second map is an isomorphism since $K$-theory commutes with countable direct sums \cite[Prop. 6.2.9]{wegge1993k} and the third isomorphism arises from an isomorphism of the underlying $C^*$-algebras. 
		\end{enumerate}
	\end{proof}
\end{lem}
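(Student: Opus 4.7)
The plan is to verify each of the four properties pointwise at every orbit $G/H \in \Or(G)$, reducing to structural properties of the composite functor $A \mapsto \K(\mathfrak C^*(A\rtimes_r \overline{G/H}))$. The key inputs are Corollary \ref{classicalcrossedproduct} identifying $\mathfrak C^*(A\rtimes_r\overline{G/H}) \cong C_0(G/H,A)\rtimes_r G$, Corollary \ref{ccact-tensor-products} for tensor compatibility, Proposition \ref{mapping-loops} which turns $C_0(X)$-tensor factors into mapping spaces out of $X^+$, and the identification $\pi_*\K(A) \cong K_*(A)$.

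For (i), a $KK^G$-equivalence $A\to B$ maps under $-\otimes C_0(G/H)$ to a $KK^G$-equivalence $C_0(G/H,A)\to C_0(G/H,B)$, and the reduced crossed product with $G$ extends to $\mathfrak{KK}^G \to \mathfrak{KK}$ and so preserves $KK$-equivalences. Applying $K_*$ delivers an isomorphism on homotopy groups at each orbit, which is by definition a stable equivalence of $\Or(G)$-spectra.

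For (ii), I would split the claim into two steps: first, that $A\mapsto \mathfrak C^*(A\rtimes_r\overline{G/H})$ preserves mapping cones; second, that Joachim's functor $\K$ sends mapping cone sequences of $C^*$-algebras to fiber sequences of spectra. The first step uses Corollary \ref{classicalcrossedproduct} and Corollary \ref{ccact-tensor-products} to move $C_0(G/H,-)$ past the defining pullback $\Cone(\pi) = A\times_B C_0((0,1],B)$, after which one has to know that $-\rtimes_r G$ preserves this particular pullback. The second step uses Proposition \ref{mapping-loops} to identify $\K(\Cone(\pi))$ with the homotopy fiber of $\K(A)\to \K(B)$, exploiting that the $C_0((0,1],B)$-piece gives rise to a mapping space out of the contractible $(0,1]^+$.

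For (iii), the identity $SA \cong A\otimes C_0((0,1))$ together with Corollary \ref{ccact-tensor-products} yields $\mathfrak C^*(SA\rtimes_r\overline{G/H}) \cong \mathfrak C^*(A\rtimes_r\overline{G/H})\otimes C_0((0,1))$, and Proposition \ref{mapping-loops} converts the extra $C_0((0,1))$ factor into a loop on $\K$. For (iv), one writes down the obvious natural map $\bigvee_i \K_{A_i}^G \to \K_{\oplus_i A_i}^G$ and checks that it is a $\pi_*$-isomorphism at every $G/H$ by combining Lemma \ref{homotopydirectsums} (wedges and $\pi_*$), the fact that topological $K$-theory commutes with countable direct sums of $C^*$-algebras, and the algebraic identification $\bigoplus_i \mathfrak C^*(A_i\rtimes_r\overline{G/H}) \cong \mathfrak C^*\bigl((\bigoplus_i A_i)\rtimes_r\overline{G/H}\bigr)$. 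The main technical obstacle is the first half of (ii): one must ensure that the reduced crossed product genuinely preserves the pullback presenting $\Cone(\pi)$. This is not formal for arbitrary pullbacks, but should follow because the mapping cone sequence $0\to SB\to \Cone(\pi)\to A\to 0$ admits a natural completely positive section, so the reduced crossed product by $G$ remains exact on this sequence; once this preservation is established, the remaining arguments are a systematic unwinding of Joachim's construction via Proposition \ref{mapping-loops}.
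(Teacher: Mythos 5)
Your proposal follows essentially the same route as the paper: pointwise verification at each $G/H$ using Corollary \ref{classicalcrossedproduct}, Corollary \ref{ccact-tensor-products}, Proposition \ref{mapping-loops}, Lemma \ref{homotopydirectsums}, and continuity of $K$-theory, with the same two-step decomposition in (ii). The one point you flag as a technical obstacle --- that $\rtimes_r G$ must preserve the pullback presenting $\Cone(\pi)$ --- is glossed over in the paper, and your resolution via the natural equivariant completely positive section $a\mapsto (a,\,t\mapsto t\pi(a))$ is a correct way to fill in that step.
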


\section{Identification of the assembly maps}\label{identification}
In this section we finally identify the Davis--L\"uck assembly map
	\[H^G_*(\mathcal E_{\Fin}G,\K_A^G)\to H^G_*(\pt,\K_A^G)\]
with the Meyer--Nest assembly map
	\[K_*(\tilde A\rtimes_r G)\to K_*(A\rtimes_r G).\]
The strategy is to use the Dirac morphism $D\in KK^G(\tilde A,A)$ from Theorem \ref{Meyernesttheorem} to compare the Davis--L\"uck map with coefficients in $A$ to the Davis--L\"uck map with coefficients in $\tilde A$. To do so, we need the functor $A\mapsto \K_A^G$ to extend from the category of separable $G$-$C^*$-algebras to the category $\mathfrak{KK}^G$. Due to the choice of our specific model of the $K$-theory spectrum $\K$, it is not obvious how to construct such an extension. One solution could be to choose a $KK$-functorial model for $\K$ which also satisfies Lemma \ref{K is triangulated} and show that the functor $A\mapsto \mathfrak C^*(A\rtimes_r\overline{G/H})$ extends to a triangulated functor $\mathfrak{KK}^G\to \mathfrak{KK}$. The necessary machinery for such an approach can be found in the recent preprint \cite{bunke2021stable} which appeared after the first preprint of this work. However, the author decided to stick to the explicit $K$-theory spectrum and give an elementary solution to the functoriality problem using zig-zags.

\begin{defn}
A \emph{zig-zag of $G$-equivariant $*$-homomorphisms} is a diagram 
		\[A_1\xrightarrow{\varphi_1}B_1\xleftarrow{\psi_1}A_2\xrightarrow{\varphi_2}\dotsb\xrightarrow{\varphi_{n}}B_n\xleftarrow{\psi_n}A_{n+1}.\]	
	of $G$-equivariant $*$-homomorphisms, such that each $\psi_k$ is a $KK^G$-equivalence. Such a zig-zag naturally defines a $KK^G$-class 
		\[[\psi_n]^{-1}\circ[\varphi_n]\circ\dotsb\circ[\psi_1]^{-1}\circ [\varphi_1]\in KK^G(A_1,A_{n+1}).\]	
	 Similarly, a \emph{zig-zag of ($\Or(G)$-)spectra} is a diagram 
		\[\bm E_1\xrightarrow{f_1}\bm F_1\xleftarrow{g_1}\bm E_2\xrightarrow{f_2}\dotsb\xrightarrow{f_{n}}\bm F_n\xleftarrow{g_n}\bm E_{n+1}\]	
	of ($\Or(G)$-)spectra, such that each $g_k$ is a stable equivalence. By Lemma \ref{homotopyinvariance}, every such zig-zag gives rise to a well-defined natural transformation
		\[(\id_X\otimes g_n)_*^{-1}\circ \dotsb\circ(\id_X\otimes f_1)_*:H^G_*(X,\bm E_1)\to H^G_*(X,\bm E_{n+1}).\]
		on homology. Thus, any zig-zag of $G$-equivariant $*$-homomorphisms gives rise to a natural transformation on homology. The following lemma shows, that we can always restrict to the case of zig-zags:
\end{defn}

\begin{lem}\label{kkbymorphisms}
	Every morphism in $\mathfrak{KK}^G$ can be represented by a zig-zag of $G$-equivariant $*$-homomorphisms. 
	\begin{proof}
	This follows from the proofs of \cite[Prop. 6.1, Thm. 6.5]{meyer2000equivariant}.
	\end{proof}
\end{lem}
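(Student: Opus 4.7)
The plan is to invoke the Cuntz--Meyer picture of equivariant Kasparov theory, which is exactly the content of \cite[Prop.~6.1, Thm.~6.5]{meyer2000equivariant}. The key point is that every element of $KK^G(A,B)$ can be represented by an honest $G$-equivariant $*$-homomorphism between auxiliary $C^*$-algebras that are $KK^G$-equivalent to $A$ and to a stabilization of $B$, respectively. Once such a representation is in hand, turning it into a zig-zag is purely formal: one inserts the auxiliary algebras as intermediate vertices joined to $A$ and $B$ by the corresponding $KK^G$-equivalences.

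Concretely, I would work with the Cuntz algebra $qA$ associated to $A$, defined as the kernel of the fold $*$-homomorphism $A\ast A\to A$ from the free product of two copies of $A$, equipped with the diagonal $G$-action. Cuntz's theorem, extended by Meyer to the equivariant setting, asserts that the canonical projection $\pi_A:qA\to A$ is a $KK^G$-equivalence and that there is a natural bijection
\[
	KK^G(A,B)\;\cong\;[qA,\,B\otimes\mathcal K(\mathcal H)]_G,
\]
where $\mathcal H$ is a sufficiently large separable $G$-Hilbert space and $[-,-]_G$ denotes $G$-homotopy classes of $G$-equivariant $*$-homomorphisms. Moreover, the stabilization $\sigma_B:B\to B\otimes \mathcal K(\mathcal H)$ is a $KK^G$-equivalence by property (ii) of the theorem stated at the beginning of Section~\ref{meyer-nest}.

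Given a class $\alpha\in KK^G(A,B)$, I would pick a $G$-equivariant $*$-homomorphism $\varphi:qA\to B\otimes\mathcal K(\mathcal H)$ representing $\alpha$ under the above bijection, and then form the zig-zag
\[
	A\xrightarrow{\id_A}A\xleftarrow{\pi_A}qA\xrightarrow{\varphi}B\otimes\mathcal K(\mathcal H)\xleftarrow{\sigma_B}B.
\]
Its associated $KK^G$-class is $[\sigma_B]^{-1}\circ [\varphi]\circ[\pi_A]^{-1}\circ[\id_A]=\alpha$, as required.

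The only step of any substance is the appeal to the Cuntz--Meyer representation theorem; granted this, the explicit construction of the zig-zag and the verification that it represents $\alpha$ are immediate from the universal characterization of $KK^G$ recorded earlier. The main conceptual obstacle, were one to prove Cuntz--Meyer from scratch, would be to show in the equivariant setting that $\pi_A$ is a $KK^G$-equivalence and that homotopies of $*$-homomorphisms $qA\to B\otimes\mathcal K(\mathcal H)$ exhaust all of $KK^G(A,B)$, but this is exactly what the cited results of Meyer accomplish.
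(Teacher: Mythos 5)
Your proposal is correct and is essentially the paper's proof: the paper simply cites \cite[Prop.~6.1, Thm.~6.5]{meyer2000equivariant}, and your argument unfolds exactly that citation into an explicit zig-zag via the Cuntz--Meyer picture. The only minor imprecision is that in the equivariant setting Meyer's theorem requires stabilizing the source as well (one uses $q(A\otimes\mathcal K(L^2(G\times\N)))$ rather than plain $qA$), which lengthens the zig-zag by one further $KK^G$-equivalence but changes nothing structurally.
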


From now on, we pretend that all $KK^G$-classes are represented by $G$-equivariant $*$-homomorphisms. We leave it to the reader to replace the relevant maps of spectra by zig-zags. Recall from Remark \ref{orGextension} that there is a natural isomorphism $H^G_*(\pt,\K_A^G)\cong K_*(A\rtimes_r G)$. Thus, the Meyer--Nest assembly map can be identified with the map
	$H^G_*(\pt,\K_{\tilde A}^G)\to H^G_*(\pt,\K_A^G)$.
We are now ready to state our main theorem:

\begin{thm}\label{mainidentification}
	The indicated maps in the following diagram are isomorphisms.
		
		\[\begin{tikzcd}
				&H^G_*(\mathcal E_{\Fin}G,\K_{\tilde A}^G)\arrow[r,"\operatorname{pr}_*","\cong"']\arrow[d,"D_*","\cong"']	&H_*^G(\pt,\K_{\tilde A}^G)\arrow[d,"D_*"]\\
				&H_*^G(\mathcal E_{\Fin}G,\K_A^G)\arrow[r,"\operatorname{pr}_*"]&H^G_*(\pt,\K_A^G)
		\end{tikzcd}\]
	In particular, the Meyer--Nest assembly map can be identified with the Davis--L\"uck assembly map. 
\end{thm}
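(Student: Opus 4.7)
The plan follows the two-step strategy sketched in the introduction. I would first handle the left vertical map. For a finite subgroup $H\subseteq G$, the map $D_*:H^G_*(G/H,\K_{\tilde A}^G)\to H^G_*(G/H,\K_A^G)$ is, by Remark \ref{orGextension} and Corollary \ref{classicalcrossedproduct}, isomorphic to $K_*(C_0(G/H,\tilde A)\rtimes_r G)\to K_*(C_0(G/H,A)\rtimes_r G)$, which via the classical Morita equivalence $C_0(G/H,B)\rtimes_r G\sim B\rtimes_r H$ becomes $D_*:K_*(\tilde A\rtimes_r H)\to K_*(A\rtimes_r H)$; this is an isomorphism because $D$ is a $KK^H$-equivalence. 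To promote this to $\mathcal E_{\Fin}G$, I would filter a $G$-$CW$-model with isotropy in $\Fin$ by skeleta and apply the five lemma to the long exact sequences in homology obtained from Lemma \ref{cofibertocofiber} and Lemma \ref{fibercofiber}; the passage to colimits is handled by Lemma \ref{homotopydirectsums}.

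For the upper horizontal map I would consider the class
\[\mathcal L:=\{C\in \mathfrak{KK}^G\mid \pr_*:H^G_*(\mathcal E_{\Fin}G,\K_C^G)\to H^G_*(\pt,\K_C^G)\text{ is an isomorphism}\}\]
and check it is a localizing subcategory in the sense of Definition \ref{localizingdefinition}. Closure under $KK^G$-equivalence uses Lemma \ref{K is triangulated}(i) together with Lemma \ref{homotopyinvariance}; closure under suspension uses Lemma \ref{K is triangulated}(iii); closure under mapping cones uses Lemma \ref{K is triangulated}(ii), Lemma \ref{cofibertocofiber}, Lemma \ref{fibercofiber} and the five lemma applied to the resulting long exact sequence; closure under countable direct sums uses Lemma \ref{K is triangulated}(iv), Lemma \ref{homotopydirectsums} and the distributivity of the balanced smash product over wedges. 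Since $\tilde A\in \langle \mathcal{CI}\rangle$ by Theorem \ref{Meyernesttheorem}, it then suffices to show $\mathcal{CI}\subseteq \mathcal L$.

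For a generator $C=\Ind_H^G B$ of $\mathcal{CI}$ with $H\in \Fin$, I would construct the induction equivalence \eqref{introinduction}. For each orbit $G/K$, Green's imprimitivity theorem supplies a natural Morita equivalence $C_0(G/K,\Ind_H^G B)\rtimes_r G\sim C_0((G/K)|_H,B)\rtimes_r H$; realising this through an explicit zig-zag of $C^*$-categories built from the transformation groupoids $\overline{G/K}$ and the functorial algebra $\mathfrak C^*_f$ of Definition \ref{defn-of-universal-algebra} should yield a natural stable equivalence of $\Or(G)$-spectra $\K_{\Ind_H^G B}^G\simeq I_*\K_B^H$, with $I_*$ as in Lemma \ref{davisinduction}. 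That lemma, applied to $X\in\{\mathcal E_{\Fin}G,\pt\}$, identifies the upper horizontal map with $H^H_*(\mathcal E_{\Fin}G|_H,\K_B^H)\to H^H_*(\pt,\K_B^H)$. Since $H$ is finite, every subgroup of $H$ lies in $\Fin$, so the fixed points $(\mathcal E_{\Fin}G|_H)^K$ are contractible for every $K\subseteq H$, making $\mathcal E_{\Fin}G|_H$ an $H$-model for a point; hence this map is an isomorphism.

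The main obstacle will be the construction in the last step of a genuine natural stable equivalence of $\Or(G)$-spectra, not just an abstract isomorphism on homology. Naturality in $G/K$ forces a Green-type equivalence compatible with the reduced crossed product $-\rtimes_r\overline{G/K}$, the enveloping algebra $\mathfrak C^*_f$ and the graded tensor product $-\hat\otimes \C_n\hat\otimes \hat{\mathcal K}$ defining $\K$; the groupoid-level functoriality assembled in Section \ref{kspectrum}, and in particular Lemma \ref{functorial-reduced-crossed-product} together with Corollaries \ref{classicalcrossedproduct}--\ref{ccact-tensor-products}, is what should make this feasible.
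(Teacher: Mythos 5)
Your proposal is correct and follows essentially the same route as the paper: Lemma \ref{top-dirac-going-down} for the left vertical map (orbit-wise identification via Green's imprimitivity plus excision), Lemma \ref{consider-induces-algebras} for the localizing class, and Theorem \ref{ind-iso-construction} for the generators $\Ind_H^GB$ via the induction equivalence and $H$-contractibility of $\mathcal E_{\Fin}G|_H$. The only cosmetic difference is that the paper realises the Green equivalence by a single explicit $C^*$-functor $B\rtimes_r\overline{(G/K)|_H}\to(\Ind_H^GB)\rtimes_r\overline{G/K}$ (a full-corner inclusion) rather than a zig-zag, which settles the naturality concern you flag at the end.
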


We reduce the proof to the trivial case of finite groups by a series of lemmas. A key ingredient is the following classical theorem. A simple proof of it for discrete groups can be found in \cite[Prop. 6.8]{echterhoff2017crossed}.

\begin{thm}[Green's imprimitivity theorem]\label{greensimprimitivitytheorem}
	Let $G$ be a countable discrete group, $H$ a subgroup and $B$ a separable $H$-$C^*$-algebra with $H$-action $\beta$. Then the $H$-equivariant $*$-homomorphism
	\begin{equation}\label{psi-of-B}
	\psi_B:B\to \Ind_H^GB,\quad b\mapsto \left(g\mapsto \begin{cases}
		\beta_{g^{-1}}(b),&g\in H\\
		0,&g\notin H
	\end{cases}\right)
	\end{equation} 
	gives rise to an inclusion 
		\[B\rtimes_{r}H\xrightarrow{\psi_B\rtimes_{r}H}(\Ind_H^GB)\rtimes_{r}H\xhookrightarrow{}(\Ind_H^GB)\rtimes_{r}G\]
	\text{whose} image is a full corner. In particular, the inclusion 
		\[B\rtimes_r H\hookrightarrow (\Ind_H^GB)\rtimes_r G\] is a $KK$-equivalence. 
\end{thm}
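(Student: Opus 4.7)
The plan is to exhibit an explicit full projection $p$ in $M\bigl((\Ind_H^GB)\rtimes_r G\bigr)$ and to identify the corner $p\bigl((\Ind_H^GB)\rtimes_r G\bigr)p$ with the image of the composite map. Since $\Ind_H^GB$ is by construction the algebra of $B$-valued sections of an equivariant bundle over $G/H$, there is a canonical non-degenerate $*$-homomorphism $C_0(G/H)\to ZM(\Ind_H^GB)$. Let $p:=\chi_{eH}\in M(\Ind_H^GB)$ be the characteristic function of the identity coset; this is a projection. The first step is the direct verification $p\cdot (\Ind_H^GB)\cdot p=\psi_B(B)$, which is immediate from the equivariance condition $f(gh)=\beta_{h^{-1}}(f(g))$ defining $\Ind_H^GB$: the sections supported on $eH$ are determined by their value at the identity via $\psi_B$.

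Next I extend $p$ to a multiplier projection of $(\Ind_H^GB)\rtimes_r G$. For $f\in\Ind_H^GB$ and $g\in G$ one has
\[
p\cdot fu_g\cdot p \;=\; pf\cdot(u_gpu_g^{-1})u_g \;=\; pf\,\alpha_g(p)\,u_g,
\]
where $\alpha$ denotes the translation action. Since $\alpha_g(p)=\chi_{gH}$, one computes $p\alpha_g(p)=p$ if $g\in H$ and $0$ otherwise. Hence $p\bigl((\Ind_H^GB)\rtimes_r G\bigr)p$ equals the closed linear span of the elements $\psi_B(b)u_h$ with $b\in B$, $h\in H$, which is precisely the image of the composite $B\rtimes_r H\to (\Ind_H^GB)\rtimes_r H\hookrightarrow (\Ind_H^GB)\rtimes_r G$. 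Injectivity of both inclusions in that composite follows from standard facts about reduced crossed products for discrete groups (the subgroup inclusion is faithful on the regular representation, and $\psi_B$ is injective since $\beta$ is, so $\psi_B\rtimes_r H$ is faithful).

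Fullness of $p$ is then a short argument: the translates $u_gpu_g^{*}=\chi_{gH}$ for $g$ ranging over a set of coset representatives of $G/H$ form a family of orthogonal projections in $M(\Ind_H^GB)$ whose finite sums constitute an approximate unit for $\Ind_H^GB$. Consequently the closed two-sided ideal generated by $p$ inside $(\Ind_H^GB)\rtimes_r G$ contains $\Ind_H^GB$ and hence all of $(\Ind_H^GB)\rtimes_r G$. Once the corner property and fullness are in place, the final assertion — that the inclusion is a $KK$-equivalence — follows from the standard fact that a full corner embedding $pAp\hookrightarrow A$ is a $KK$-equivalence, realized by the Morita equivalence bimodule $pA$.

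The main obstacle I anticipate is bookkeeping with multipliers in the reduced crossed product rather than in the full one: one must verify that $p$ really extends to a multiplier of $(\Ind_H^GB)\rtimes_r G$ (this is immediate from the central action of $C_0(G/H)$ together with non-degeneracy of the regular representation) and that the computation of $p\cdot fu_g\cdot p$ carried out on the dense $*$-subalgebra of finite sums $\sum_g f_gu_g$ passes to the completion. Once this is done, the construction of the Morita bimodule $pA$ realizing the $KK$-equivalence is entirely formal.
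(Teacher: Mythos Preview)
The paper does not supply its own proof of this theorem: it is quoted as a classical result, with the remark that ``a simple proof of it for discrete groups can be found in \cite[Prop.~6.8]{echterhoff2017crossed}.'' Your argument via the corner projection $p=\chi_{eH}\in M(\Ind_H^GB)$ is correct and is exactly the standard proof one finds in such references: identify $p(\Ind_H^GB)p=\psi_B(B)$, compute $p\,fu_g\,p$ using $\alpha_g(p)=\chi_{gH}$ to see that the corner of the crossed product is the image of $B\rtimes_rH$, and obtain fullness from the fact that the translates $\{\chi_{gH}\}$ form an approximate unit of $\Ind_H^GB$.

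One small point worth tightening: the sentence ``$\psi_B$ is injective since $\beta$ is, so $\psi_B\rtimes_r H$ is faithful'' is slightly under-argued. Injectivity of $\psi_B$ is immediate from $\psi_B(b)(e)=b$, but injectivity of $\psi_B\rtimes_r H$ on the \emph{reduced} crossed product is not automatic from injectivity of $\psi_B$ alone. The cleanest fix is to observe that evaluation at $e$ gives an $H$-equivariant left inverse $\mathrm{ev}_e:\Ind_H^GB\to B$ to $\psi_B$, so that $\mathrm{ev}_e\rtimes_rH$ is a left inverse to $\psi_B\rtimes_rH$; alternatively, use the faithful conditional expectation $E:A\rtimes_rH\to A$ available for discrete $H$ together with the identity $E_{\Ind_H^GB}\circ(\psi_B\rtimes_rH)=\psi_B\circ E_B$. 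Either way the gap closes immediately.
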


\begin{lem}\label{top-dirac-going-down}
	The map 
		\[D_*:H^G_*(\mathcal E_{\Fin}G,\K_{\tilde A}^G)\to H^G_*(\mathcal E_{\Fin}G,\K_A^G)\]
	is an isomorphism.
	\begin{proof}
		Let $H\subseteq G$ be a finite subgroup and consider the following commutative diagram:
			\[\begin{tikzcd}
				H^G_*(G/H,\K_{\tilde A}^G)\arrow[r]\arrow[d, "\cong"]	&H^G_*(G/H,\K_A^G)\arrow[d,"\cong"]\\
				K_*(C_0(G/H,\tilde A)\rtimes_r G)\arrow[r]	&K_*(C_0(G/H,A)\rtimes_r G)\\
				K_*(\tilde A\rtimes_r H)\arrow[r,"\cong"]\arrow[u,"\cong "]	&K_*(A\rtimes_r H)\arrow[u,"\cong"]
			\end{tikzcd}\]
		Here the horizontal maps are induced by $D$. The vertical isomorphisms are obtained from Corollary \ref{classicalcrossedproduct} and Theorem \ref{greensimprimitivitytheorem}. The lower horizontal map is an isomorphism since $D$ is a $KK^H$-equivalence. Thus, the upper horizontal map is an isomorphism. Since $H\subseteq G$ was an arbitrary finite subgroup, it follows from an excision argument that the map 
			\[D_*:H^G_*(\mathcal E_{\Fin}G,\K_{\tilde A}^G)\to H^G_*(\mathcal E_{\Fin}G,\K_A^G)\]
		is an isomorphism too. 
	\end{proof}
\end{lem}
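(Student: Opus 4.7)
The plan is to prove the theorem by reducing, via a localizing subcategory argument, to the already-established Lemma \ref{top-dirac-going-down} together with the trivial case of finite groups. First, the square commutes from bifunctoriality: the horizontal arrows are natural in the spectrum coefficient while the vertical arrows are induced by a zig-zag of $G$-equivariant $*$-homomorphisms representing $D\in KK^G(\tilde A,A)$ (via Lemma \ref{kkbymorphisms}) and are natural in the $G$-space. The left vertical arrow is an isomorphism by Lemma \ref{top-dirac-going-down}, so it remains to show that the top horizontal arrow is an isomorphism; commutativity will then identify the right vertical arrow with the Davis--L\"uck assembly map, after using Remark \ref{orGextension} and Corollary \ref{classicalcrossedproduct} to identify $H^G_*(\pt,\K_A^G)\cong K_*(A\rtimes_r G)$.

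Next I would introduce the full subcategory
\[\mathcal C := \left\{B\in \mathfrak{KK}^G \mid \pr_*\colon H^G_*(\mathcal E_{\Fin}G,\K_B^G)\to H^G_*(\pt,\K_B^G)\text{ is an isomorphism}\right\}\]
and show that it is localizing in the sense of Definition \ref{localizingdefinition}. Closure under $KK^G$-equivalence follows from Lemma \ref{K is triangulated}(i) combined with Lemma \ref{homotopyinvariance}. Closure under suspensions uses Lemma \ref{K is triangulated}(iii) and the degree shift $\pi_{n+1}(\Omega \bm E)\cong \pi_n(\bm E)$. Closure under countable direct sums combines Lemma \ref{K is triangulated}(iv) with Lemma \ref{homotopydirectsums} and the fact that balanced smash products commute with wedges. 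Finally, closure under mapping cones follows from Lemma \ref{K is triangulated}(ii) together with Lemma \ref{cofibertocofiber} and Lemma \ref{fibercofiber}, applying the five lemma to the long exact sequences obtained at both $\mathcal E_{\Fin}G$ and $\pt$. Since $\tilde A\in \langle\mathcal{CI}\rangle$, it then suffices to establish $\mathcal{CI}\subseteq \mathcal C$.

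For $B=\Ind_H^G B'$ with $H$ a finite subgroup and $B'$ an $H$-$C^*$-algebra, I would construct the natural induction isomorphism \eqref{introinduction}. The key input is that, via the natural $G$-equivariant identification $C_0(G/K,\Ind_H^G B')\cong \Ind_H^G(C_0(G/K|_H,B'))$ together with Corollary \ref{classicalcrossedproduct}, Green's imprimitivity theorem (Theorem \ref{greensimprimitivitytheorem}) applied to the $H$-$C^*$-algebra $C_0(G/K|_H,B')$ provides a natural $KK$-equivalence
\[C_0(G/K|_H,B')\rtimes_r H\;\simeq\; C_0(G/K,\Ind_H^G B')\rtimes_r G\]
for every $G/K\in \Or(G)$. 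Packaging these equivalences compatibly along the morphisms of $\Or(G)$ and transporting them through Joachim's functor $\K$ yields a stable equivalence of $\Or(G)$-spectra $I_*\K_{B'}^H\simeq \K_{\Ind_H^G B'}^G$, where $I\colon \Or(H)\to \Or(G)$ denotes the induction functor of Lemma \ref{davisinduction}. Combining this with Lemma \ref{davisinduction} produces the desired natural isomorphism
\[H^G_*(X,\K_{\Ind_H^G B'}^G)\cong H^H_*(X|_H,\K_{B'}^H)\]
for every $G$-$CW$-complex $X$.

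Applying this induction isomorphism to both $X=\mathcal E_{\Fin}G$ and $X=\pt$ identifies the assembly map in question with the $H$-equivariant map $H^H_*(\mathcal E_{\Fin}G|_H,\K_{B'}^H)\to H^H_*(\pt,\K_{B'}^H)$. Since $H$ is finite, every subgroup of $H$ lies in $\Fin$, so $(\mathcal E_{\Fin}G)^L$ is contractible for every $L\leq H$; thus $\mathcal E_{\Fin}G|_H$ is $H$-equivariantly contractible, and homotopy invariance (Lemma \ref{homotopyinvariance}) forces the restricted assembly map to be an isomorphism. This closes the localizing argument and finishes the proof. The hardest step will be the assembly of Green's Morita equivalences into an actual zig-zag of $\Or(G)$-spectra, rather than just an objectwise abstract isomorphism on homotopy groups: the naturality along morphisms $G/K\to G/K'$ in $\Or(G)$ must be tracked carefully through the non-$*$-homomorphism nature of Morita equivalence and through the functor $\mathfrak C^*_f$ used to define $\K$.
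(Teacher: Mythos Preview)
Your proposal does not prove the stated lemma at all: it proves Theorem \ref{mainidentification} instead. The statement you are asked to establish is precisely that the left vertical arrow
\[
D_*:H^G_*(\mathcal E_{\Fin}G,\K_{\tilde A}^G)\to H^G_*(\mathcal E_{\Fin}G,\K_A^G)
\]
is an isomorphism, yet in your first paragraph you write ``The left vertical arrow is an isomorphism by Lemma \ref{top-dirac-going-down}'' and then spend the remainder of the argument on the top horizontal arrow, the localizing subcategory $\mathcal C$, and the induction isomorphism. That is exactly the content of Lemma \ref{consider-induces-algebras}, Theorem \ref{ind-iso-construction}, and the proof of Theorem \ref{mainidentification}; none of it touches the map $D_*$ on $\mathcal E_{\Fin}G$. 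Invoking Lemma \ref{top-dirac-going-down} to prove Lemma \ref{top-dirac-going-down} is circular.

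The actual argument the paper gives is much more elementary and has nothing to do with localizing subcategories or induction. One fixes a finite subgroup $H\subseteq G$ and uses Remark \ref{orGextension} and Corollary \ref{classicalcrossedproduct} to identify $H^G_*(G/H,\K_B^G)\cong K_*(C_0(G/H,B)\rtimes_r G)$, and then Green's imprimitivity (Theorem \ref{greensimprimitivitytheorem}) to pass further to $K_*(B\rtimes_r H)$. Under these identifications the map induced by $D$ becomes $K_*(\tilde A\rtimes_r H)\to K_*(A\rtimes_r H)$, which is an isomorphism because $\Res_G^H D$ is a $KK^H$-equivalence. Since $\mathcal E_{\Fin}G$ is a $G$-$CW$-complex whose equivariant cells all have finite isotropy, a standard excision/skeletal induction argument (comparing the two homology theories $H^G_*(-,\K_{\tilde A}^G)$ and $H^G_*(-,\K_A^G)$ via the natural transformation $D_*$) promotes the isomorphism on orbits $G/H$ to an isomorphism on $\mathcal E_{\Fin}G$.
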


\begin{lem}\label{consider-induces-algebras}
	Let $\mathcal D\subseteq \mathfrak {KK}^G$ be the full subcategory of all $G$-$C^*$-algebras, for which the Davis--L\"uck assembly map is an isomorphism. Then $\mathcal D$ is localizing in the sense of Definition \ref{localizingdefinition}. 
\end{lem}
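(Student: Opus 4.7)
The plan is to verify, in turn, each of the four closure properties listed in Definition \ref{localizingdefinition}: closure under $KK^G$-equivalence, suspension, mapping cones, and countable direct sums. These correspond exactly to the four items of Lemma \ref{K is triangulated}, which were tailored for precisely this purpose, so the argument should largely reduce to combining that lemma with the homotopy-theoretic tools developed at the end of Section \ref{davis-lueck}.

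For closure under $KK^G$-equivalence, I would use Lemma \ref{kkbymorphisms} to represent any equivalence $A \to B$ in $\mathfrak{KK}^G$ by a zig-zag of $G$-equivariant $*$-homomorphisms; applying $\K_-^G$ produces a zig-zag of $\Or(G)$-spectra whose backward arrows are stable equivalences by Lemma \ref{K is triangulated}(i). Lemma \ref{homotopyinvariance} then supplies a natural isomorphism $H^G_*(X,\K_A^G) \cong H^G_*(X,\K_B^G)$ for every $G$-$CW$-complex $X$, compatible with the projection $\mathcal{E}_{\Fin}G \to \pt$, so that $A$ and $B$ belong to $\mathcal{D}$ simultaneously.

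For closure under mapping cones, given a mapping cone sequence $\Cone(\pi) \to A \to B$ with $A, B \in \mathcal{D}$, Lemma \ref{K is triangulated}(ii) produces a fiber sequence $\K_{\Cone(\pi)}^G \to \K_A^G \to \K_B^G$ of $\Or(G)$-spectra. Smashing with $\Top^G(-,X)_+$ for $X = \mathcal{E}_{\Fin}G$ and $X = \pt$ and invoking Lemmas \ref{cofibertocofiber} and \ref{fibercofiber} yields a commutative ladder of long exact sequences in $H^G_*(X,-)$ connected by $\operatorname{pr}_*$, to which the 5-lemma applies to show that the assembly map for $\Cone(\pi)$ is also an isomorphism. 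Closure under suspension then follows at once by specializing to the mapping cone sequence $SA \to 0 \to A$ coming from the zero morphism $0 \to A$, whose cone is manifestly $SA$; here $0 \in \mathcal{D}$ trivially because $\K_0^G$ is stably contractible. Finally, for closure under countable direct sums, Lemma \ref{K is triangulated}(iv) combined with distributivity of the balanced smash product over wedges and Lemma \ref{homotopydirectsums} gives a natural isomorphism $H^G_*(X,\K_{\bigoplus_i A_i}^G) \cong \bigoplus_i H^G_*(X,\K_{A_i}^G)$ compatible with $\operatorname{pr}_*$, under which the assembly map for $\bigoplus_i A_i$ splits as the direct sum of the assembly maps for the individual $A_i$, which are isomorphisms by hypothesis.

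The argument is essentially bookkeeping once the preceding lemmas are in place. The only mild subtlety I foresee is checking that every long exact sequence, stable equivalence, and wedge-to-direct-sum identification constructed above is natural in $X$ with respect to $\mathcal{E}_{\Fin}G \to \pt$, so that the 5-lemma (for mapping cones) and the splitting (for direct sums) genuinely apply to the assembly maps and not merely to the groups they connect; this naturality, however, is already built into Lemma \ref{cofibertocofiber} and the construction of the balanced smash product.
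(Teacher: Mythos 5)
Your proof is correct and follows essentially the same route as the paper: each closure property is verified via the corresponding item of Lemma \ref{K is triangulated} together with Lemmas \ref{homotopyinvariance}, \ref{homotopydirectsums}, \ref{fibercofiber} and \ref{cofibertocofiber}, with the five-lemma handling mapping cones. The only (harmless) deviation is that you deduce closure under suspension from the mapping-cone case applied to $0\to A$ (using that $0\in\mathcal D$ trivially), whereas the paper invokes Lemma \ref{K is triangulated}(iii) directly to identify the assembly map for $SA$ with that for $A$ shifted by one degree.
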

In particular, we can reduce the proof of Theorem \ref{mainidentification} to the case $\tilde A=\Ind_H^GB$ for a finite subgroup $H\subseteq G$ and a separable $H$-$C^*$-algebra $B$.

\begin{proof}
	By Lemma \ref{K is triangulated} (i) and Lemma \ref{homotopyinvariance}, $\mathcal D$ is closed under $KK^G$-equivalence. By Lemma \ref{K is triangulated} (iii), the Davis--L\"uck map for a suspension can be identified with the Davis--L\"uck map for the original algebra with homology groups shifted by one. Thus, $\mathcal D$ is closed under suspension. By Lemma \ref{homotopydirectsums}, Lemma \ref{K is triangulated} (iv) and compatibility of the balanced smash product $\wedge_{\Or(G)}$ with wedge sums, the Davis--L\"uck map for a countable direct sum $\bigoplus_{i\in I}A_i$ can be identified with the direct sum of the Davis--L\"uck maps for the individual summands $A_i,i\in I$. Thus, $\mathcal D$ is closed under countable direct sums. It remains to verify stability under mapping cone sequences. Let $\Cone(\pi)\to A\xrightarrow{\pi}B$ be a mapping cone sequence. By Lemma \ref{K is triangulated} (ii), the sequence
		\[\K_{\Cone(\pi)}^G\to \K_A^G\to \K_B^G\]
	is a fiber sequence. Now if $X$ is any $G$-$CW$-complex, the sequence
		\[\Top^G(-,X)_+\wedge_{\Or(G)}\K_{\Cone(\pi)}^G\to \Top^G(-,X)_+\wedge_{\Or(G)}\K_A^G\to \Top^G(-,X)_+\wedge_{\Or(G)}\K_B^G\]
	is still a fiber sequence by Lemmas \ref{fibercofiber} and \ref{cofibertocofiber}. In particular, the rows in the following diagram are exact:
	\[\begin{tikzcd}
							\dotsb\arrow[r] &H^G_*(\mathcal E_{\Fin}G,\K_{\Cone(\pi)}^G)\arrow[r]\arrow[d]&H^G_*(\mathcal E_{\Fin}G,\K_{A}^G)\arrow[r]\arrow[d]&H^G_*(\mathcal E_{\Fin}G,\K_{B}^G)\arrow[r]\arrow[d]&\dotsb \\
							\dotsb\arrow[r] &H^G_*(\mathcal \pt,\K_{\Cone(\pi)}^G)\arrow[r]&H^G_*(\pt,\K_{A}^G)\arrow[r]&H^G_*(\pt,\K_{B}^G)\arrow[r]&\dotsb 
						\end{tikzcd}\]
	It follows from the five-lemma that $\Cone(\pi),A$ and $B$ belong to $\mathcal D$ if at least two of them do. 
\end{proof}

\begin{thm}\label{ind-iso-construction}
	Let $H\subseteq G$ be a finite subgroup, $B$ an $H$-$C^*$-algebra and $X$ a $G$-$CW$-complex. Then there is a natural induction isomorphism
		\[H^H_*(X|_H,\K_B^H)\xrightarrow{\cong}H^G_*(X,\K_{\Ind_H^GB}^G),\]
	where $X|_H$ denotes the restriction of $X$ to $H$. 
		
	\begin{proof}
		Consider the induction functor
			\[I:\Or(H)\to \Or(G),\quad H/K\mapsto G\times_HH/K\cong G/K.\]
		By Lemma \ref{davisinduction}, there is a natural isomorphism 
			\[H^H_*(X|_H,\K_B^H)\cong H^G_*(X,I_*\K_B^H).\]			
		It thus suffices to construct a natural stable equivalence 
			$I_*\K_B^H\simeq \K_{\Ind_H^GB}^G$
		of $\Or(G)$-spectra. We prove this in two steps. Our first claim is that the natural map
			\begin{equation*}\label{plugGKinthespectrum}
				I_*\K_B^H(G/K)=\Top^H(-,G/K)_+\wedge_{\Or(H)}\K_B^H\to \K(B\rtimes_r \overline{(G/K)|_H})
			\end{equation*}
		
		given by $f\wedge x\mapsto f_*(x)$ is a stable equivalence for each $G/K\in \Or(G)$. To see this, decompose $G/K\cong \coprod_i H/L_i$ into $H$-orbits. We get a commutative diagram 
			\[\begin{tikzcd}
				\Top^H(-,G/K)_+\wedge_{\Or(H)}\K_B^H\arrow[r]	&\K(B\rtimes_r \overline{(G/K)|_H})\\
				\bigvee_i \Top^H(-,H/L_i)_+\wedge_{\Or(H)}\K_B^H\arrow[u,"\cong"]\arrow[r,"\cong"]	&\bigvee_i\K(B\rtimes_r\overline{H/L_i})\arrow[u,"\simeq"]
			\end{tikzcd}.\]
		Here the left vertical map is an isomorphism by compatibility of balanced smash products with wedge sums. The lower horizontal map is an isomorphism by Remark \ref{orGextension}. To see that the right-hand map is an equivalence, use Corollary \ref{classicalcrossedproduct} to identify 
			\[\mathfrak C^*(B\rtimes_r \overline{(G/K)|_H})\cong \bigoplus_i\mathfrak C^*(B\rtimes_r\overline{H/L_i})\]
		and apply Lemma \ref{K is triangulated} (iv). This proves the first claim.

		Our second claim is that there is a natural $C^*$-functor 
			\[F:B\rtimes_r \overline{(G/K)|_H}\to (\Ind_H^GB)\rtimes_r \overline{G/K}\]			
		which induces a stable equivalence of $K$-theory spectra. To construct $F$, denote the $H$-action on $B$ by $\beta$ and consider the $H$-equivariant $*$-homomorphism 
			\[\psi_B:B\to \Ind_H^GB,\quad b\mapsto \left(g\mapsto 
				\begin{cases}
					\beta_{g^{-1}}(b),&g\in H\\
					0,&g\notin H
				\end{cases}\right).
			\]		
		Then $\psi_B$ is automatically $\overline{(G/K)|_H}$-equivariant and induces a $C^*$-functor
			\[F:B\rtimes_r\overline{(G/K)|_H}\xrightarrow{\psi_B\rtimes_r \overline{(G/K)|_H}}(\Ind_H^GB)\rtimes_r \overline{(G/K)|_H}\xhookrightarrow{}(\Ind_H^GB)\rtimes_r \overline{G/K}.\]

		To see that $F$ induces a stable equivalence of $K$-theory spectra, we claim that $\mathfrak C^*F$ can be identified with the $*$-homomorphism
			\[C_0(G/K,B)\rtimes_r H\xrightarrow{\psi_{C_0(G/K,B)}\rtimes_r H} \Ind_H^G(C_0(G/K,B))\rtimes_rH\xhookrightarrow{} \Ind_H^G(C_0(G/K, B))\rtimes_r G\]
		from Theorem \ref{greensimprimitivitytheorem}. Indeed this identification can easily be made by using Corollary \ref{classicalcrossedproduct} and checking commutativity of the diagram
		
		\[
				\begin{tikzcd}
					&C_0(G/K,B)\arrow[rr,"\id_{C_0(G/K)}\otimes \psi_B"]\arrow[drr,"\psi_{C_0(G/K,B)}"'] &&C_0(G/K,\Ind_H^GB)\arrow[d,"\alpha","\cong"']\\
					&	&&\Ind_H^G(C_0(G/K,B))					
				\end{tikzcd},
		\]
			where $\alpha$ is defined by 
				\[\alpha(f)(g)(hK):=f(ghK)(g),\quad f\in C_0(G/K,\Ind_H^GB),g\in G,hK\in G/K\]
			as in \cite[Lem. 12.6]{guentner2000equivariant}. This proves the second claim and provides us with a natural equivalence 
			\[I_*\K_B^H(G/K)\simeq \K(B\rtimes_r \overline{(G/K)|_H})\simeq \K_{\Ind_H^GB}^G(G/K),\quad G/K\in \Or(G).\]

	\end{proof}
\end{thm}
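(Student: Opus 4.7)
The plan is to reduce the statement to constructing a natural stable equivalence of $\Or(G)$-spectra $I_*\K_B^H \simeq \K_{\Ind_H^G B}^G$, where $I : \Or(H) \to \Or(G)$ is the induction functor $H/K \mapsto G/K$. Indeed, Lemma \ref{davisinduction} already identifies $H^H_*(X|_H, \K_B^H)$ with $H^G_*(X, I_*\K_B^H)$ naturally in the $G$-$CW$-complex $X$, and Lemma \ref{homotopyinvariance} propagates any stable equivalence of coefficient spectra to an isomorphism on equivariant homology. So everything reduces to working pointwise over $\Or(G)$.

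To evaluate the left-hand side at $G/K$, I would first unwind $I_*\K_B^H(G/K) = \Top^H(-, G/K)_+ \wedge_{\Or(H)} \K_B^H$ by decomposing the $H$-space $(G/K)|_H$ into $H$-orbits $\coprod_i H/L_i$. Compatibility of the balanced smash product with wedge sums together with Remark \ref{orGextension} rewrites this as $\bigvee_i \K(B \rtimes_r \overline{H/L_i})$. On the other hand, Corollary \ref{classicalcrossedproduct} identifies $\mathfrak C^*(B \rtimes_r \overline{(G/K)|_H})$ with $\bigoplus_i \mathfrak C^*(B \rtimes_r \overline{H/L_i})$, so Lemma \ref{K is triangulated}(iv) assembles the wedge into a single spectrum and supplies a stable equivalence $I_*\K_B^H(G/K) \simeq \K(B \rtimes_r \overline{(G/K)|_H})$. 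It remains to compare this with $\K_{\Ind_H^G B}^G(G/K) = \K(\Ind_H^G B \rtimes_r \overline{G/K})$.

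For the comparison I would build a natural $C^*$-functor
\[
F : B \rtimes_r \overline{(G/K)|_H} \longrightarrow \Ind_H^G B \rtimes_r \overline{G/K}
\]
by composing the $H$-equivariant $*$-homomorphism $\psi_B : B \to \Ind_H^G B$ from Theorem \ref{greensimprimitivitytheorem} (which is automatically $\overline{(G/K)|_H}$-equivariant) with the faithful groupoid inclusion $\overline{(G/K)|_H} \hookrightarrow \overline{G/K}$, and then applying Lemma \ref{functorial-reduced-crossed-product}. To show that $F$ induces a stable equivalence of $K$-theory spectra, the strategy is to apply $\mathfrak C^*$, use Corollary \ref{classicalcrossedproduct} on both sides, and recognize the resulting $*$-homomorphism as the Green inclusion $C_0(G/K, B) \rtimes_r H \hookrightarrow \Ind_H^G(C_0(G/K, B)) \rtimes_r G$ of Theorem \ref{greensimprimitivitytheorem} applied to the $H$-$C^*$-algebra $C_0(G/K, B)$. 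Since that inclusion is a full corner, it is a $KK$-equivalence, and Lemma \ref{K is triangulated}(i) upgrades it to a stable equivalence of $K$-theory spectra.

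The main obstacle is the compatibility check in the last step. One must exhibit a natural $G$-equivariant isomorphism $\alpha : C_0(G/K, \Ind_H^G B) \xrightarrow{\cong} \Ind_H^G(C_0(G/K, B))$ and then verify that under $\alpha$ the map $\id_{C_0(G/K)} \otimes \psi_B$ coincides with Green's $\psi_{C_0(G/K, B)}$. This is an explicit but entirely formal diagram chase once the definitions of the induced algebra and of $\psi$ are unwound. With that verification in place, all the pieces glue into the desired natural stable equivalence $I_*\K_B^H \simeq \K_{\Ind_H^G B}^G$ and hence the claimed induction isomorphism.
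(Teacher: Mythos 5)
Your proposal is correct and follows essentially the same route as the paper's proof: the same reduction via Lemma \ref{davisinduction}, the same orbit decomposition combined with Remark \ref{orGextension}, Corollary \ref{classicalcrossedproduct} and Lemma \ref{K is triangulated}(iv), and the same identification of $\mathfrak C^*F$ with the Green inclusion for $C_0(G/K,B)$ via the isomorphism $\alpha$. No substantive differences or gaps.
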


\begin{proof}[Proof of Theorem \ref{mainidentification}]
	By Lemma \ref{top-dirac-going-down}, the map 
		\[H^G_*(\mathcal E_{\Fin}G,\K_{\tilde A}^G)\to H^G_*(\mathcal E_{\Fin}G,\K_A^G)\]
	is an isomorphism. By Lemma \ref{consider-induces-algebras}, it suffices to prove that 
		\[H^G_*(\mathcal E_{\Fin}G,\K_{\Ind_H^GB}^G)\to H^G_*(\pt,\K_{\Ind_H^GB}^G)\]		
	is an isomorphism for every finite subgroup $H\subseteq G$ and every $H$-$C^*$-algebra $B$. Theorem \ref{ind-iso-construction} provides us with a commutative diagram 
		\[\begin{tikzcd}
			&H^G_*(\mathcal E_{\Fin}G,\K_{\Ind_H^GB}^G)\arrow[r]	&H^G_*(\pt,\K_{\Ind_H^GB}^G)\\
			&H^H_*(\mathcal E_{\Fin}G|_H,\K_B^H)\arrow[u,"\cong"]\arrow[r,"\cong"]	&H^H_*(\pt,\K_B^H)\arrow[u,"\cong"]
		\end{tikzcd}.\]		
			Since $H$ is finite, $\mathcal E_{\Fin}G$ is $H$-contractible. Thus, the lower map in the diagram is an isomorphism and so is the upper one.
\end{proof}

\section{Exotic crossed products}\label{exotic}

As shown in \cite{higson2002counterexamples}, the Baum--Connes conjecture (with coefficients) turns out to be false in general. The problem is that for certain discrete groups $G$, the functor $A\mapsto K_*(A\rtimes_r G)$ is not exact in the middle. Motivated by these counterexamples, Baum, Guentner and Willett gave a new formulation of the Baum--Connes conjecture in \cite{baum2015expanders} which fixes these counterexamples and is equivalent to the old conjecture for all exact groups. The idea is to replace the \emph{reduced crossed product} by a better behaved crossed product functor. Such \emph{exotic crossed product functors} were also studied extensively by Buss, Echterhoff and Willett, see \cite{buss2016exotic} for a survey.

In this section we reformulate our main result for more general exotic crossed product functors and indicate how to adapt the proofs to this situation. 

\begin{defn}[{\cite[Def. 2.1]{baum2015expanders}}]
	Let $G$ be a countable discrete group. A \emph{crossed product functor} $\rtimes_\mu G$ is a functor $A\mapsto A\rtimes_\mu G$ from the category of $G$-$C^*$-algebras to the category of $C^*$-algebras, such that every $A\rtimes_\mu G$ contains the convolution algebra $AG$ as a dense subalgebra, together with natural transformations 
		\[A\rtimes_{\max}G\to A\rtimes_\mu G\to A\rtimes_r G\]
	which extend the identity on $AG$.
\end{defn}

For a $G$-$C^*$-algebra $A$, the \emph{maximal Meyer--Nest assembly map} can be defined as the map 
	\[K_*(\tilde A\rtimes_{\max} G)\to K_*(A\rtimes_{\max}G)\]
induced by the image of the Dirac morphism $D\in KK^G(\tilde A,A)$ under the descent functor $\rtimes_{\max} G:\mathfrak{KK}^G\to \mathfrak{KK}$. Similarly, there is a \emph{maximal Baum--Connes assembly map}
	\[K_*^G(\mathcal E_{\Fin}G,A)\to K_*(A\rtimes_{\max} G).\]
The maximal Baum--Connes and Meyer--Nest assembly maps can be identified with exactly the same proof as in \cite[Thm. 5.2]{meyer2004baum}.
The reduced versions of the assembly maps can be obtained from the maximal versions by postcomposing with the natural map
	\[K_*(A\rtimes_{\max}G)\to K_*(A\rtimes_r G).\]
More generally, for any crossed product functor $\rtimes_\mu G$, we can define the \emph{$\mu$-Baum--Connes assembly map} and the \emph{$\mu$-Meyer--Nest assembly map} by postcomposing their maximal versions with the natural map 
	\begin{equation}\label{quotientmap}
		K_*(A\rtimes_{\max}G)\to K_*(A\rtimes_\mu G).
	\end{equation}

\begin{rem}
	If an exotic crossed product functor $\rtimes_\mu G$ is Morita-compatible in the sense of \cite[Def. 3.3]{baum2015expanders}, then there is a potentially different way of constructing the $\mu$-Meyer--Nest-assembly map. One could also directly use the descent functor $\rtimes_\mu G:\mathfrak{KK}^G\to \mathfrak{KK}$ constructed in \cite[Prop. 6.1]{buss2018exotic} and define the $\mu$-Meyer--Nest assembly map as the map 
		\[K_*(\tilde A\rtimes_\mu G)\to K_*(A\rtimes_\mu G).\]
	Since the natural transformation $\rtimes_{\max}G\Rightarrow \rtimes_\mu G$ descends to a natural transformation of the corresponding functors on $\mathfrak{KK}^G$, we get a commutative diagram
		\[\begin{tikzcd}
			K_*(\tilde A\rtimes_{\max}G)\arrow[d,"\cong"]\arrow[r]	&K_*(A\rtimes_{\max}G)\arrow[d]\\
			K_*(\tilde A\rtimes_\mu G)\arrow[r]	&K_*(A\rtimes_\mu G)
		\end{tikzcd}\]
	Since the generators of $\langle \mathcal{CI}\rangle$ are proper, the left vertical map is an isomorphism. So in fact we end up with the same assembly map. 
	The same remark also applies to the $\mu$-Baum--Connes assembly map (cp. \cite[Lem. 6.4]{buss2018exotic}).
\end{rem}		
		
To complete the picture, we define a maximal version of the Davis--L\"uck assembly map and indicate how to identify it with the maximal Meyer--Nest assembly map. We can then define the \emph{$\mu$-Davis--L\"uck assembly map} by postcomposing it with the natural map \eqref{quotientmap} and conclude that all three pictures of the $\mu$-assembly map are equivalent. 

\begin{defn}
	Let $\mathcal G$ be a groupoid and $A$ a $\mathcal G$-$C^*$-algebra. Denote by $A\rtimes_{\max}\mathcal G$ the completion of the convolution category $A\mathcal G$ by the supremum of all $C^*$-norms.
\end{defn}
	
\begin{rem}
	The supremum of all $C^*$-semi-norms $\rho$ on $A\mathcal G$ is finite. Indeed, this fact is well-known if $\mathcal G$ is a discrete group and in the general case, the norm of an element $a\in A\mathcal G(x,y)$ can be estimated by 
		\[\rho(a)^2=\rho(a^*a)\leq \|a^*a\|_{A_x\rtimes_{\max}(\mathcal G(x,x))}.\]
\end{rem}

It follows directly from the definition that an analog of Lemma \ref{functorial-reduced-crossed-product} holds for the maximal crossed product, i.e. $A\rtimes_{\max}\mathcal G$ is functorial both in $A$ and in $\mathcal G$. Note that we also defined $\mathfrak C^*(A\rtimes_{\max} \mathcal G)$ as an enveloping $C^*$-algebra. For \emph{discrete} groupoids, the classical maximal groupoid crossed product algebra (defined as in \cite[Sec. 1.4]{anantharaman2016some}) can also be defined as an enveloping $C^*$-algebra of a certain convolution algebra (cp. \cite[Sec. 3]{quigg1999c}). Using this and Lemma \ref{isometriconmorphisms} it is easy to prove an analog of Corollary \ref{classicalgroupoidcrossedproduct}, i.e. that $\mathfrak C^*(A\rtimes_{\max}\mathcal G)$ is canonically isomorphic to the classical maximal groupoid crossed product algebra of the $\mathcal G$-$C^*$-algebra $A$. In particular, we get the following corollary:

\begin{cor}[c.f. Corollary \ref{classicalcrossedproduct}]
	Let $G$ be a discrete group acting on a set $X$. Let $A$ be a $G$-$C^*$-algebra and consider $A$ as an $\overline X$-$C^*$-algebra as in Example \ref{transformationgroupoids}. Then there is a natural isomorphism
		\[\mathfrak C^*(A\rtimes_{\max} \overline X)\cong C_0(X,A)\rtimes_{\max} G.\]	
\end{cor}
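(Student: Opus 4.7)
The plan is to combine the analog of Corollary \ref{classicalgroupoidcrossedproduct} for the maximal crossed product (asserted just before the statement) with a direct identification of the classical maximal groupoid crossed product of $\overline X$ on $A$ with $C_0(X,A)\rtimes_{\max}G$. The former identifies $\mathfrak{C}^*(A\rtimes_{\max}\overline X)$ with the enveloping $C^*$-algebra of the $*$-algebra $\mathfrak{C}^*_0(A\overline X)$, so it suffices to exhibit a $*$-isomorphism between this $*$-algebra and the convolution $*$-algebra underlying $C_0(X,A)\rtimes_{\max}G$, and then extend to the enveloping completions by the universal property. This replaces the argument of Corollary \ref{classicalcrossedproduct} (which relied on the regular representation) with one that manipulates universal properties only, as is appropriate for the maximal completion.

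Concretely, I would send a generator $a u_g \in A\overline X(x,gx)$ with $a\in A$ to the element $(a\delta_{gx})u_g$ of the convolution $*$-algebra underlying $C_0(X,A)\rtimes_{\max}G$, where $\delta_{gx}\in C_0(X)$ denotes the characteristic function of $\{gx\}$. This is a bijection between generating sets, and checking that it preserves the $*$-algebra structure reduces to comparing the formulas $au_g\cdot bu_h = a\alpha_g(b)u_{gh}$ and $(au_g)^*=\alpha_{g^{-1}}(a)^*u_{g^{-1}}$ from Definition \ref{convolutioncategory} with the corresponding identities in $C_0(X,A)\rtimes_{\max}G$, using that the induced $G$-action on $C_0(X,A)$ is given by $(\alpha_g f)(x)=g\cdot f(g^{-1}x)$. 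Once the $*$-isomorphism is established, the extension to the maximal completions is automatic, since both algebras are defined as enveloping $C^*$-algebras of their underlying $*$-algebras; naturality in $A$ is immediate from the explicit formula on generators.

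The main obstacle is the bookkeeping around the bijection. In the groupoid picture, the coefficient $a$ of $au_g \in A\overline X(x,gx)$ lives in the fiber $A_{gx}=A$ over the \emph{target} of the morphism, while in the crossed-product picture one has to decide at which point of $X$ the corresponding section of $C_0(X,A)$ is supported. The formula must read $au_g\mapsto (a\delta_{gx})u_g$ rather than $(a\delta_x)u_g$ so that the composability condition $hx=y$ in $\overline X$ matches the condition $gx=y$ arising from the convolution product in $C_0(X,A)\rtimes_{\max}G$; once this choice is pinned down, multiplication and involution agree term by term in both pictures.
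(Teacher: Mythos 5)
Your argument is correct and follows essentially the same route as the paper: both proofs pass through the maximal analogue of Corollary \ref{classicalgroupoidcrossedproduct}, i.e.\ the description of $\mathfrak C^*(A\rtimes_{\max}\overline X)$ as an enveloping $C^*$-algebra of a convolution $*$-algebra. The difference is that where the paper then simply cites the classical identification of the maximal transformation-groupoid crossed product with $C_0(X,A)\rtimes_{\max}G$ (via \cite{quigg1999c} together with Lemma \ref{isometriconmorphisms}), you carry out that identification by hand on generators; your formula $au_g\mapsto (a\delta_{gx})u_g$, with the coefficient placed over the \emph{target} $gx$, is the correct choice, and the verification of compatibility with product and involution goes through exactly as you describe. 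One small point to tighten: the image of $\mathfrak C^*_0(A\overline X)$ under your map consists only of finite sums $\sum_g f_g u_g$ with \emph{finitely supported} $f_g\colon X\to A$, which is a proper dense $*$-subalgebra of the convolution algebra $C_0(X,A)G$ whose enveloping $C^*$-algebra defines $C_0(X,A)\rtimes_{\max}G$. To conclude that the two enveloping $C^*$-algebras agree you should add that every $C^*$-seminorm on this subalgebra is dominated by $\sum_g\|f_g\|_\infty$ (the fibres $A$ over distinct points of $X$ are orthogonal and each carries a unique $C^*$-norm), so it extends continuously to all of $C_0(X,A)G$, and hence the two universal completions coincide. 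With that one line added, the proof is complete.
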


An easy application of the universal property of the maximal tensor product also gives the following result:

\begin{lem}[cp. Lemma \ref{ccact-tensor-products}]
	Let $\mathcal G$ be a groupoid, $A$ a $\mathcal G$-$C^*$-algebra and $B$ a $C^*$-algebra (endowed with the trivial $\mathcal G$-action). Then there is a canonical $*$-isomorphism 
		\[\mathfrak C^*((A\otimes_{\max} B)\rtimes_{\max} \mathcal G)\cong \mathfrak C^*(A\rtimes_{\max} \mathcal G)\otimes_{\max} B.\]
\end{lem}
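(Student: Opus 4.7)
My strategy is to show that both sides represent the same functor on the category of $C^*$-algebras, hence are canonically isomorphic by Yoneda, rather than constructing the isomorphism on a dense subalgebra and checking by hand that it extends to the completion.

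First I would unpack the universal property of the right-hand side. By the universal property of $\mathfrak C^*$ combined with that of the maximal tensor product, a $*$-homomorphism $\mathfrak C^*(A \rtimes_{\max} \mathcal G) \otimes_{\max} B \to D$ into a $C^*$-algebra $D$ is the same datum as a pair $(F,\psi)$, where $F : A \rtimes_{\max} \mathcal G \to D$ is a $*$-functor with non-composable morphisms multiplying to zero in $D$, and $\psi : B \to M(D)$ is a $*$-homomorphism whose image commutes with the image of $F$.

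Next I would unpack the universal property of the left-hand side analogously. A $*$-homomorphism $\mathfrak C^*((A \otimes_{\max} B) \rtimes_{\max} \mathcal G) \to D$ is the same as a $*$-functor $(A \otimes_{\max} B) \rtimes_{\max} \mathcal G \to D$ with the usual vanishing condition. Unwinding the definition of the maximal groupoid crossed product as the enveloping completion of the convolution category, and then applying the universal property of the maximal tensor product fiberwise to each morphism set $(A \otimes_{\max} B)\mathcal G(x,y)$, such a $*$-functor amounts precisely to a $*$-functor $A \rtimes_{\max} \mathcal G \to D$ together with a $*$-homomorphism $B \to M(D)$ commuting with it. Triviality of the $\mathcal G$-action on $B$ is exactly what makes the $B$-part globally well-defined, independent of the object $y$, and compatible with the groupoid representation. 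This is the same data as $(F,\psi)$ above.

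A routine bookkeeping check shows that the bijection between the two sets of data is natural in $D$, yielding the canonical $*$-isomorphism by Yoneda. On dense $*$-subalgebras the isomorphism sends $(a \otimes b) u_g \mapsto au_g \otimes b$, matching the formula in Corollary \ref{ccact-tensor-products}. The main technical nuisance I anticipate is the nonunital case: the symbols $u_g$ have to be interpreted via the multiplier algebras of the relevant $C^*$-algebras in order to make the ``groupoid representation'' part of the universal property precise. This is standard and has already been used implicitly in the preceding constructions, so I would expect it to present no serious difficulty beyond care with notation.
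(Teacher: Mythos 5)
Your proposal is correct and matches the paper's approach: the paper proves this maximal version precisely by invoking the universal property of the maximal tensor product (in contrast to the reduced case, Corollary \ref{ccact-tensor-products}, which is handled via the regular representation and Lemma \ref{isometriconmorphisms}), and your Yoneda/corepresentability argument is just a careful unpacking of that. The only point to watch, which you already flag, is that the universal property of $\otimes_{\max}$ applies to the endomorphism $C^*$-algebras at each object rather than literally ``fiberwise to each morphism set,'' with general morphisms $cu_g$ then handled through multipliers as in a covariant representation.
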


Note that we only applied the above lemma for nuclear $B$, in which case we safely can replace the maximal tensor product by the minimal tensor product. 

\begin{defn}[c.f. Definition \ref{defnofKAG}]
	Let $G$ be a discrete group acting on a $C^*$-algebra $A$. We define an $\Or(G)$-spectrum $\K_{A,\max}^G$ by 
		\[\K_{A,\max}^G(G/H):=\K(A\rtimes_{\max}\overline{G/H}).\]
	The \emph{maximal Davis--L\"uck assembly map} is the map
		\[H^G_*(\mathcal E_{\Fin}G,\K_{A,\max}^G)\to H^G_*(\pt,\K_{A,\max}^G).\]
\end{defn}

From the above, one can adapt the proof of Lemma \ref{K is triangulated} to $\K_{A,\max}^G$:

\begin{lem}[c.f. Lemma \ref{K is triangulated}]
	The functor $A\mapsto \K_{A,\max}^G$ from the category of separable $G$-$C^*$-algebras to the category of $\Or(G)$-spectra has the following properties:
	\begin{enumerate}
		\item It maps $KK^G$-equivalences to stable equivalences.
		\item It maps mapping cone sequences to fiber sequences.
		\item For any separable $G$-$C^*$-algebra $A$, we have $\K_{SA,\max}^G\cong \Omega \K_{A,\max}^G$.
		\item Let $A_i,i\in I$ be a countable family of separable $G$-$C^*$-algebras. Then there is a natural stable equivalence 
			$\bigvee_{i\in I}\K_{A_i,\max}^G\simeq \K_{\oplus_{i\in I} A_i,\max}^G$.
	\end{enumerate}
\end{lem}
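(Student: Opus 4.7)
The plan is to mimic the proof of Lemma \ref{K is triangulated} verbatim, replacing every occurrence of the reduced crossed product by the maximal one and invoking the two maximal-crossed-product analogues of Corollary \ref{classicalcrossedproduct} and Lemma \ref{ccact-tensor-products} stated immediately above. All four properties then reduce to the corresponding statements for the functor $\K$ on $C^*$-algebras (which have already been used in the reduced case) together with standard facts about the maximal crossed product.

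For (i), I would fix $G/H \in \Or(G)$ and identify
\[
\pi_*\bigl(\K_{A,\max}^G(G/H)\bigr)\cong K_*\bigl(C_0(G/H,A)\rtimes_{\max}G\bigr)
\]
via the maximal analogue of Corollary \ref{classicalcrossedproduct}. The functor $\otimes C_0(G/H)\colon \mathfrak{KK}^G \to \mathfrak{KK}^G$ sends $KK^G$-equivalences to $KK^G$-equivalences, and the maximal descent functor $\rtimes_{\max}G\colon \mathfrak{KK}^G \to \mathfrak{KK}$ (a classical construction of Kasparov, available since $G$ is discrete) sends $KK^G$-equivalences to $KK$-equivalences, hence induces isomorphisms on $K_*$. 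For (iv), the functor $A\mapsto A\rtimes_{\max}\overline{G/H}$ preserves countable direct sums of $C^*$-categories and hence, after applying $\mathfrak C^*$, produces a $C^*$-algebraic direct sum; the claim follows from Lemma \ref{homotopydirectsums} and the fact that topological $K$-theory commutes with countable direct sums of $C^*$-algebras, exactly as in the reduced case.

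For (ii) and (iii), the key point is that $C_0((0,1])$ and $C_0((0,1))$ are nuclear, so that for these tensor factors the maximal and minimal tensor products agree; the maximal analogue of Lemma \ref{ccact-tensor-products} therefore yields
\[
\mathfrak C^*\bigl(\Cone(\pi)\rtimes_{\max}\overline{G/H}\bigr)\cong \Cone\!\left(\mathfrak C^*(A\rtimes_{\max}\overline{G/H})\to \mathfrak C^*(B\rtimes_{\max}\overline{G/H})\right)
\]
and analogously $\mathfrak C^*(SA\rtimes_{\max}\overline{G/H})\cong S\mathfrak C^*(A\rtimes_{\max}\overline{G/H})$. Proposition \ref{mapping-loops} then converts these identities into the desired fiber sequence and loop identification at the level of spectra, pointwise over the orbit category.

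The only step that required any actual argument in the reduced setting was the direct-sum statement, since Lemma \ref{functorial-reduced-crossed-product} had to be used to move the direct sum past $\rtimes_r$; in the maximal setting the universal property of $\rtimes_{\max}$ makes this step essentially trivial, so in fact the maximal version is slightly easier. I do not anticipate any genuine obstacle: the only place one has to be careful is to ensure that the nuclearity-based identification of maximal and minimal tensor products applies whenever one of the tensor factors involved is nuclear (as is the case for $C_0((0,1])$, $C_0((0,1))$, and $\bigoplus_{i\in I}\C$ when forming countable direct sums).
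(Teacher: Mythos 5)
Your proposal is correct and follows exactly the route the paper intends: the paper's own ``proof'' consists of the remark that one adapts the proof of Lemma~\ref{K is triangulated} verbatim using the maximal analogues of Corollary~\ref{classicalcrossedproduct} and Lemma~\ref{ccact-tensor-products}, together with the observation (made explicitly in the paper) that the tensor factors involved are nuclear so that maximal and minimal tensor products coincide. Your write-up is simply a fleshed-out version of that adaptation, with the same ingredients in each of the four parts.
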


Using this and the maximal version of Green's imprimitivity theorem (Theorem \ref{greensimprimitivitytheorem}), we can adapt all the proofs in Section \ref{identification} to the maximal crossed product:

\begin{thm}[cp. Theorem \ref{mainidentification}]
	The indicated maps in the following diagram are isomorphisms.
		
		\[\begin{tikzcd}
				&H^G_*(\mathcal E_{\Fin}G,\K_{\tilde A,\max}^G)\arrow[r,"\operatorname{pr}_*","\cong"']\arrow[d,"D_*","\cong"']	&H_*^G(\pt,\K_{\tilde A,\max}^G)\arrow[d,"D_*"]\\
				&H_*^G(\mathcal E_{\Fin}G,\K_{A,\max}^G)\arrow[r,"\operatorname{pr}_*"]&H^G_*(\pt,\K_{A,\max}^G)
		\end{tikzcd}\]
	In particular, the maximal Meyer--Nest assembly map can be identified with the maximal Davis--L\"uck assembly map. 
\end{thm}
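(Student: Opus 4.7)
The plan is to replay the proof of Theorem \ref{mainidentification} line by line, replacing $\rtimes_r$ by $\rtimes_{\max}$ throughout, and invoking the maximal analogs of the preparatory results that the author has already assembled in this section (the maximal versions of Corollary \ref{classicalcrossedproduct}, Lemma \ref{ccact-tensor-products}, Green's imprimitivity theorem, and Lemma \ref{K is triangulated}). Concretely, the proof should consist of three lemmas mirroring Lemma \ref{top-dirac-going-down}, Lemma \ref{consider-induces-algebras}, and Theorem \ref{ind-iso-construction}, followed by an assembly argument identical to the one used to deduce Theorem \ref{mainidentification}.

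First, to show the left vertical $D_*$ is an isomorphism, I would fix a finite subgroup $H\subseteq G$, rewrite $H^G_*(G/H,\K^G_{A,\max})\cong K_*(C_0(G/H,A)\rtimes_{\max}G)$ via the maximal analog of Corollary \ref{classicalcrossedproduct}, and apply the maximal Green imprimitivity theorem to identify this with $K_*(A\rtimes_{\max}H)$. Since $H$ is finite (hence amenable) the latter agrees with $K_*(A\rtimes_r H)$, and the Dirac morphism is a $KK^H$-equivalence, so the corresponding horizontal map is an isomorphism. Running the usual excision argument over the cells of $\mathcal E_\Fin G$ promotes this to the statement for the full classifying space.

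Second, I would verify that the full subcategory $\mathcal D_{\max}\subseteq \mathfrak{KK}^G$ of $G$-$C^*$-algebras for which the maximal Davis--L\"uck map is an isomorphism is localizing. The four closure properties (under $KK^G$-equivalence, suspension, mapping cones, countable direct sums) follow formally from the maximal analog of Lemma \ref{K is triangulated}, Lemma \ref{cofibertocofiber}, Lemma \ref{homotopydirectsums}, and the five-lemma applied to the long exact sequences induced by the maximal analog of the fiber sequence $\K^G_{\Cone(\pi),\max}\to \K^G_{A,\max}\to \K^G_{B,\max}$. This reduces the problem to generators $\tilde A=\Ind_H^GB\in \mathcal{CI}$.

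Third, I would mirror Theorem \ref{ind-iso-construction} by constructing a natural stable equivalence $I_*\K^H_{B,\max}\simeq \K^G_{\Ind_H^GB,\max}$ of $\Or(G)$-spectra. The orbit decomposition of $G/K$ into $H$-orbits, together with the maximal analog of Lemma \ref{K is triangulated}(iv), reduces the problem orbit-wise; the remaining equivalence is supplied by the $*$-functor $F:B\rtimes_{\max}\overline{(G/K)|_H}\to (\Ind_H^GB)\rtimes_{\max}\overline{G/K}$, whose image under $\mathfrak C^*$ is exactly the full-corner inclusion $C_0(G/K,B)\rtimes_{\max}H\hookrightarrow \Ind_H^G(C_0(G/K,B))\rtimes_{\max}G$ from the maximal Green imprimitivity theorem, hence a $KK$-equivalence. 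Combining with Lemma \ref{davisinduction} yields the induction isomorphism, and the argument concludes as in the proof of Theorem \ref{mainidentification}, since $\mathcal E_\Fin G$ is $H$-contractible for finite $H$. The only non-mechanical ingredient — and so the main potential obstacle — is the maximal version of Green's imprimitivity theorem; this is classical, and follows because the full-corner Morita equivalence bimodule underlying the reduced statement continues to implement a Morita equivalence for the maximal crossed products.
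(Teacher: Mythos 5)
Your proposal is correct and follows exactly the route the paper intends: the paper's own ``proof'' consists precisely of the remark that, once the maximal analogs of Corollary \ref{classicalcrossedproduct}, Lemma \ref{ccact-tensor-products}, Lemma \ref{K is triangulated} and Green's imprimitivity theorem are in place, all the arguments of Section \ref{identification} (Lemmas \ref{top-dirac-going-down} and \ref{consider-induces-algebras}, Theorem \ref{ind-iso-construction}) carry over with $\rtimes_r$ replaced by $\rtimes_{\max}$. Your spelled-out version of that adaptation, including the identification of the key nontrivial input as the maximal Green imprimitivity theorem, is exactly what the paper does.
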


\begin{cor}
	For any exotic crossed product functor $\rtimes_\mu G$, the $\mu$-Meyer--Nest assembly map can be identified with the $\mu$-Davis--L\"uck assembly map. 
\end{cor}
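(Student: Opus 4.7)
The plan is to deduce this corollary as an essentially formal consequence of the maximal analogue of Theorem \ref{mainidentification}, together with the very definition of the $\mu$-versions of the assembly maps as post-compositions with the natural quotient
	\[q\colon K_*(A\rtimes_{\max}G)\to K_*(A\rtimes_\mu G).\]

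First I would apply the maximal theorem and use the natural isomorphism $H^G_*(\pt,\K_{A,\max}^G)\xrightarrow{\cong} K_*(A\rtimes_{\max}G)$ from Remark \ref{orGextension} to rewrite the codomain column of its square. This yields a commutative diagram
	\[\begin{tikzcd}
		H^G_*(\mathcal E_{\Fin}G,\K_{\tilde A,\max}^G)\arrow[r,"\cong"]\arrow[d,"\cong"] & K_*(\tilde A\rtimes_{\max}G) \arrow[d,"D_*"] \\
		H^G_*(\mathcal E_{\Fin}G,\K_{A,\max}^G)\arrow[r] & K_*(A\rtimes_{\max}G)
	\end{tikzcd}\]
in which the left column and the top row are isomorphisms, the right column is the maximal Meyer--Nest assembly map, and the bottom row is the maximal Davis--L\"uck assembly map.

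Next I would enlarge this square to the right by $q$, forming the hexagon
	\[\begin{tikzcd}
		H^G_*(\mathcal E_{\Fin}G,\K_{\tilde A,\max}^G)\arrow[r,"\cong"]\arrow[d,"\cong"] & K_*(\tilde A\rtimes_{\max}G)\arrow[d,"D_*"]\arrow[dr,"\mu\text{-MN}"]& \\
		H^G_*(\mathcal E_{\Fin}G,\K_{A,\max}^G)\arrow[r] & K_*(A\rtimes_{\max}G)\arrow[r,"q"] & K_*(A\rtimes_\mu G).
	\end{tikzcd}\]
The right-hand triangle commutes by the very definition of the $\mu$-Meyer--Nest assembly map, while the left-hand square commutes by the maximal theorem; hence the outer hexagon commutes. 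By definition, the composite along the lower row is the $\mu$-Davis--L\"uck assembly map. Combined with the fact that the left column and the top row are isomorphisms, this identifies the $\mu$-Meyer--Nest map with the $\mu$-Davis--L\"uck map.

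The only subtlety is checking that the natural isomorphism of Remark \ref{orGextension} is sufficiently natural in $A$ for post-composition with $q$ on the right of the square to correspond, on the spectrum side, to the natural transformation $\rtimes_{\max}G\Rightarrow \rtimes_\mu G$. Since this naturality is built into the construction, I do not expect a genuine obstacle: all the substantial work has been absorbed into the maximal analogue of Theorem \ref{mainidentification}, and the corollary is a purely formal diagram chase.
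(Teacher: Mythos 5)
Your proposal is correct and is essentially the argument the paper intends: since both $\mu$-assembly maps are \emph{defined} by postcomposing their maximal versions with the natural map $K_*(A\rtimes_{\max}G)\to K_*(A\rtimes_\mu G)$, the corollary follows formally from the maximal analogue of Theorem \ref{mainidentification} by exactly the diagram chase you describe.
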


\subsection*{Acknowledgement}
This work arose from the author's master thesis at the University of M\"unster. The author would like to thank Siegfried Echterhoff, Michael Joachim, Thomas Nikolaus, Markus Schmetkamp and Felix Janssen for valuable discussions, comments, motivation and inspiration.

\end{document}